\numberwithin{equation}{section}
\newcommand{\diam}{\text{diam}}
\renewcommand{\div}{\mbox{div}\,}
\renewcommand{\div}{\mbox{div}\,}
\newcommand{\R}{{\mathbb R}} \newcommand{\K}{{\mathbb K}}
\newcommand{\e}{\varepsilon} 
\newcommand{\F}{\mathbf{F}}
\newcommand{\A}{\mathbf{A}}
\newcommand{\ba}{\mathbf{a}}
\newcommand{\M}{{\mathrm M}}
\newcommand{\calM}{{\mathcal M}}
\def\longequals{\mathbin{=\kern-2pt=}}
\def\eqdef{\mathbin{\buildrel \rm def \over \longequals}}
\newtheorem{theorem}{Theorem}[section]
\newtheorem{corollary}[theorem]{Corollary}
\newtheorem{definition}[theorem]{Definition}
\newtheorem{remark}[theorem]{Remark}
\newtheorem{lemma}[theorem]{Lemma}
\newtheorem{proposition}[theorem]{Proposition}
\numberwithin{equation}{section}
\newcommand{\beq}{\begin{equation}}
\newcommand{\eeq}{\end{equation}}
\definecolor{darkred}{rgb}{.70,.12,.20}
\definecolor{darkgreen}{rgb}{.20,.52,.14}
\title{\vspace{-1in}  Regularity estimates in weighted Morrey spaces 
for  quasilinear elliptic equations
}
\author{Giuseppe Di Fazio\footnotemark[1]
\and  Truyen  Nguyen\footnotemark[2]}
\newcommand{\Addresses}{{
  \bigskip
  \footnotesize

  G.~Di Fazio, \textsc{Dipartimento di Matematica e Informatica, Universit\`a di Catania, Viale A. Doria 6, 95125 Catania, Italy.}\par\nopagebreak
  \textit{E-mail}: \texttt{giuseppedifazio@unict.it}

  \medskip

  T.~Nguyen, \textsc{Department of Mathematics,
The University of Akron, 302 Buchtel Common, Akron, OH 44325, USA.}\par\nopagebreak
  \textit{E-mail}: \texttt{tnguyen@uakron.edu}

}}
\date{}
\providecommand{\subjclass}[1]{{\textit{2010 Mathematics Subject Classification.}} #1}
\providecommand{\keywords}[1]{{\textit{Key words and phrases.}} #1}
\begin{document}
\maketitle

\begin{abstract}
We study regularity for solutions of  quasilinear  elliptic equations of the form $\div \A(x,u,\nabla u) = \div \F $ in 
bounded domains in $\R^n$. The vector field $\A$ is assumed to be continuous in $u$, and its growth in $\nabla u$ is like that of 
the $p$-Laplace operator. We establish  interior gradient estimates in weighted Morrey spaces  for  weak solutions $u$ to the 
equation under a small BMO condition in $x$ for $\A$.  As a  consequence, we obtain
that $\nabla u$ is   in the classical Morrey space 
$\calM^{q,\lambda}$ or weighted space $L^q_w$ whenever  $|\F|^{\frac{1}{p-1}}$ is  respectively  in $\calM^{q,\lambda}$ or 
$L^q_w$, where $q$ is any number greater than $p$ and $w$ is any weight in the   Muckenhoupt class $A_{\frac{q}{p}}$. 
In addition, our  two-weight estimate allows  the possibility  to acquire the regularity for $\nabla u$ in a 
weighted Morrey space that is different from the functional space that the data $|\F|^{\frac{1}{p-1}}$ belongs to.
\end{abstract}
\subjclass{30H35, 35B45, 35B65, 35J92.}\\
\keywords{elliptic equations,  $p$-Laplacian, gradient estimates, Calder\'on-Zygmund estimates, 
weighted Morrey spaces, regularity, two-weight inequality, Muckenhoupt class.}

\renewcommand{\thefootnote}{\fnsymbol{footnote}}


\footnotetext[2]{The research of the  author is supported in part
by a grant  from the Simons Foundation (\# 318995) and by the UA Faculty Research grant FRG
(\# 207359).} 






\setcounter{equation}{0}

\section{Introduction}\label{sec:Intro}
We investigate interior gradient estimates in weighted Morrey space  for bounded weak solutions to  the following general elliptic equations of $p$-Laplacian type
\begin{equation}\label{GE}
\div \A(x, u, \nabla u)=  \div \F  \quad \mbox{in}\quad \Omega
\end{equation}
when $\Omega$ is a bounded domain in $\R^n$ ($n\geq 1$) and the vector field $\A$ is only continuous in the $u$ variable and possibly 
discontinuous in the $x$ variable. Without loss of generality we take $\Omega$ to be the Euclidean ball 
$B_{10}  := \{x\in \R^n:\, | x| <10\}$, whose special radius is simply to avoid working with many fractions in our arguments later on.  
Let  $\K\subset \R$ be an open interval and consider the general vector field
\[
\A = \A(x,z,\xi) : B_{10}\times \overline\K\times \R^n \longrightarrow \R^n
\]
which is  a Carath\'eodory map, that is, $\A(x,z,\xi)$ is measurable in $x$ for every $(z,\xi)\in \overline \K \times \R^n$ and 
continuous in $(z,\xi)$ for a.e. $x$. We assume that $\xi \mapsto\A(x,z,\xi)$ is differentiable on 
$ \mathbb{R}^n\setminus \{0\}$ for a.e. $x$ and all $z\in \overline \K$. Also,  there exist constants  $\Lambda>0$,  $1< p<\infty$, 
and a nondecreasing and right continuous function $\omega: [0,\infty)\to [0,\infty)$ with $\omega(0)=0$
such that  the following  conditions are satisfied  for a.e. $x\in B_{10}$ and all $z\in \overline\K$:
\begin{align}
&\langle \partial_\xi \A(x,z,\xi) \eta, \eta\rangle \geq \Lambda^{-1} \, |\xi|^{p-2} |\eta|^2\,  \qquad\qquad \qquad 
\qquad\forall \xi\in \R^n\setminus \{0\} \mbox{ and }\forall \eta\in\R^n,\label{structural-reference-1}\\ 
& |\A(x,z,\xi)|  + |\xi|\,  | \partial_\xi \A(x,z,\xi) | \leq \Lambda |\xi|^{p-1}\qquad\qquad\qquad \forall\xi\in  \R^n,\label{structural-reference-2}\\
& |\A(x,z_1,\xi)-\A(x,z_2,\xi)|  \leq \Lambda |\xi|^{p-1} \omega(|z_1 - z_2|) \qquad\quad \,\forall z_1, z_2\in \overline\K\mbox{ and }
\forall \xi\in \R^n. \label{structural-reference-3}
\end{align}

 The class of equations of the form \eqref{GE} with $\A$ 
 satisfying \eqref{structural-reference-1}--\eqref{structural-reference-3} contains  the  well known $p$-Laplace equations and, more generally,  equations of the type
  \begin{equation}\label{like-p-Laplacian}
\div \A(x,  \nabla u)=  \div \F  \quad \mbox{in}\quad \Omega.
\end{equation}
Gradient estimates for \eqref{like-p-Laplacian} with discontinuous coefficient 
have  been studied extensively  
by several authors
 \cite{BR,CP,DM,Di,DM2,DM3,Gi,I,KZ,LU,MP1,MP2,Ra,Uh}.  Some of these developments rely essentially  on the  perturbation method developed by 
 Caffarelli and Peral \cite{CP}, which   allows ones to deal with discontinuous coefficient and highly nonlinear structure in gradient of equation \eqref{like-p-Laplacian}.
 
In this paper we study  general quasilinear equation
\eqref{GE} when the principal part also depends on the $z$ variable.  In the case $\A$ is  Lipschitz continuous in both $x$ and $z$ variables, the interior $C^{1,\alpha}$ regularity for locally bounded weak solutions to the 
corresponding homogeneous equation was established by DiBenedetto \cite{D} and Tolksdorf \cite{T}  extending the celebrated  $C^{1,\alpha}$ estimates by   
Ural\'tceva \cite{Ur}, Uhlenbeck \cite{Uh}, Evans \cite{E1}, and Lewis \cite{Le}
for the homogeneous $p$-Laplace equation. When $\A$ is   not necessarily continuous in $x$ but has sufficiently small BMO oscillation, 
it was established in  \cite{NP} that: $|\F|^{\frac{1}{p-1}}\in L^q_{loc}  \Longrightarrow \nabla u\in L^q_{loc}$ for any $q>p$. 
It is required 
in  \cite{NP}  that $\A$ is Lipschitz continuous in the $z$ variable
since  the uniqueness property  for the frozen equation is used  there. However, this condition is 
weaken  in a recent paper \cite{BPS}  allowing only H\"older continuity in $z$ for $\A$ (i.e. $\omega(s) =s^\alpha$ 
 for some $\alpha\in (0,1)$ in \eqref{structural-reference-3}).

Our purpose of the current work is to 
 extend the mentioned  result in \cite{NP,BPS} by 
 deriving interior  estimates in weighted Morrey spaces for gradients of  locally bounded weak solutions to nonhomogeneous  equation \eqref{GE}.  
  In order to state our main results, let us recall the so-called Muckenhoupt class of $A_s$ weights. By definition,  a weight is  a nonnegative  locally  integrable function on $\R^n$ that is positive almost everywhere.
A weight $w$ belongs to the  class $A_s$, $1<s< \infty$, if 
\[
 [w]_{A_s} :=  \sup{ \Big(\fint_{B} w(x)\, dx\Big) \Big(\fint_{B} w(x)^{\frac{-1}{s-1}}\, dx\Big)^{s-1}  } <\infty,
\]
where the supremum is  taken over all balls $B$ in $\R^n$.  
We also say that  $w$ belongs to the class $A_\infty$  if 
\[
 [w]_{A_\infty}  :=  \sup{ \Big(\fint_{B} w(x)\, dx\Big) \exp{\Big(\fint_{B} \log{ w(x)^{-1}}\, dx\Big)}  } <\infty.
\]
Now let $U\subset \R^n$ be a bounded open set, $w$ be a weight, $1\leq q <\infty$, and $\varphi$ be a positive function on the set of nonempty
open balls in $\R^n$. 
A function $g: U\to \R$ is said to 
belong to the weighted space $L^q_w(U)$ if
\[
 \|g\|_{L^q_w(U)} := \Big(\int_{U} |g(x)|^q w(x)\, dx\Big)^{\frac1q}<\infty.
\]
We define the weighted Morrey 
space $\calM^{q,\varphi}_w(U)$  to be the set of all functions $g\in L^q_w(U)$ satisfying
\begin{equation}\label{weighted-morrey}
\|g\|_{\calM^{q,\varphi}_w(U)} := \sup_{ \bar x\in U,\, 0<r\leq \diam(U)} \Bigg(\frac{\varphi(B_r(\bar x))}{ w(B_r(\bar x))} \int_{B_r(\bar x)\cap U}
|g(x)|^q w \, dx \Bigg)^{\frac1q}<\infty.
\end{equation}
Notice that $\calM^{q,\varphi}_w(U)=L^q_w(U)$ if  $\varphi(B)=  w(B)$, and  we obtain the classical 
Morrey space $\calM^{q,\lambda}(U)$ ($0\leq \lambda\leq n$) by taking $w=1$ and $\varphi(B)= |B|^{\frac{\lambda}{n}}$. It is worth observing that 
$\calM^{q,\lambda}(U)= L^q(U)$ when $\lambda=n$ and  $\calM^{q,\lambda}(U)= L^\infty(U)$ when $\lambda=0$.
For brevity,
the Morrey 
space $\calM^{q,\varphi}_w(U)$ with $w=1$ will be  denoted by 
$\calM^{q,\varphi}(U)$.  Let us next introduce some slight restrictions on the function  $\varphi$.
\begin{definition}\label{class-B} Let $\varphi$ be a positive function on the set of nonempty
open balls in $\R^n$. We say that $\varphi$ belongs to the class $\mathcal B_\alpha$ with $\alpha\geq 0$ if there exists  $C>0$ such that
\[
  \frac{\varphi(B_r(x))}{\varphi(B_s(x))}\leq C \Big(\frac{r}{s}\Big)^\alpha
\]
for every $x\in\R^n$ and every $0<r\leq s$. We define $\mathcal B_+ :=\cup_{\alpha>0} \mathcal B_\alpha \subset \mathcal B_0$.
 \end{definition}
 The function $\varphi(B) := |B|^{\frac{\alpha}{n}}$ with $\alpha>0$ is an  example of functions in $\mathcal B_+$. Also for any weight 
 $w\in A_\infty$, the function   $\varphi(B) := w(B)$  belongs to the class $\mathcal B_+$ due to the characterization \eqref{charac-A-infty} below. However, a member 
of $\mathcal B_+$ does not need to be even a measure. Hereafter, for a 
ball $B\subset \R^n$ we set
$\bar\A_{B}(z,\xi) := \fint_{B} \A(x,z,\xi)\, dx$. Also, the conjugate exponent of a number $l\in (1,\infty)$ is denoted by 
$l'$.
Our first main result is:
\begin{theorem}\label{thm:weighted-morrey} 
Let $\A$
satisfy 
\eqref{structural-reference-1}--\eqref{structural-reference-3} with $p>1$,  and let $w$ be an $A_s$ weight for some $1<s<\infty$.
Then for any $q\geq p$,  $M_0>0$, and $\varphi\in \mathcal B_+$ with $\sup_{x\in B_1} \varphi( B_2(x))<\infty$, 
there exists a constant   $\delta=\delta( p, q,  n,\omega,  \Lambda, M_0,s, [w]_{A_s})>0$  such that: if
\begin{equation}\label{smallness-1}
\sup_{0<\rho\leq 1}\sup_{y\in B_{\frac{15}{2}}} \sup_{z\in \overline\K \cap [-M_0, M_0]} \fint_{B_\rho(y)} \Big[
\sup_{\xi\neq 0}\frac{|\A(x,z,\xi) - \bar\A_{B_\rho(y)}(z,\xi)|}{|\xi|^{p-1}}
\Big] \,dx \leq \delta,
\end{equation}
and $u$  is a weak solution of 
\begin{equation}\label{GE-10}
\div \A(x, u, \nabla u)=  \div \F\quad \mbox{in}\quad B_{10}
\end{equation}
satisfying $\|u\|_{L^\infty(B_{10})}\leq M_0$, we have
\begin{equation}\label{weighted-morrey-est}
\| \nabla u \|_{\calM^{q,\varphi}_w(B_1)} \leq    C\Bigg(
\|\nabla u\|_{L^p(B_{10})} +  \| \M_{B_{10}}(|\F|^{\frac{p}{p-1}})\|_{\calM^{1, \varphi^{\frac{p}{q}}}(B_{10})}^{\frac{1}{p}} 
+ \|\M_{B_{10}} (|\F|^{\frac{p}{p-1}})\|_{\calM^{\frac{q}{p},\varphi}_w(B_{10})}^{\frac{1}{p}}\Bigg).
\end{equation}
Here $\M_{B_{10}}$ denotes  the centered Hardy--Littlewood   maximal operator (see Definition~\ref{def:maximal}), and  $C>0$ 
is a constant depending only on  $q$, $p$, $n$,  $\omega$,  $\Lambda$, $M_0$, $\varphi$, $s$,  and $[w]_{A_s}$.
\end{theorem}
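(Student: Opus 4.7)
The plan is to implement the Caffarelli--Peral perturbation scheme at the level of super-level sets for the truncated maximal function $\M_{B_{10}}(|\nabla u|^p)$, and then upgrade from a level-set decay to the weighted Morrey estimate by a Vitali covering iteration combined with the $A_\infty$ property of $w$ and the $\mathcal B_+$ scaling of $\varphi$. The heart of the argument is a local comparison: for each small ball $B_r(y)\subset B_8$, compare $u$ with the solution $v$ of the doubly-frozen homogeneous equation
\begin{equation*}
\div \bar\A_{B_r(y)}(u(y), \nabla v) = 0 \quad\text{in } B_r(y), \qquad v = u \text{ on } \partial B_r(y).
\end{equation*}
Standard energy/Caccioppoli testing, together with \eqref{structural-reference-1}--\eqref{structural-reference-3}, the BMO smallness \eqref{smallness-1}, and an interior H\"older bound for $u$ coming from De~Giorgi--Nash--Moser (available because of $\|u\|_{L^\infty}\le M_0$), should yield
\begin{equation*}
\fint_{B_r(y)} |\nabla u - \nabla v|^p\, dx \le \kappa(\delta, r)\fint_{B_r(y)} |\nabla u|^p\, dx + C\fint_{B_r(y)} |\F|^{\frac{p}{p-1}}\, dx,
\end{equation*}
where $\kappa(\delta, r)\to 0$ as $\delta, r\to 0^+$. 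Since $\bar\A_{B_r(y)}(u(y),\cdot)$ satisfies the constant-coefficient hypotheses of DiBenedetto--Tolksdorf, $\nabla v$ is locally Lipschitz, and hence $\|\nabla v\|_{L^\infty(B_{r/2}(y))}^p \lesssim \fint_{B_r(y)} |\nabla v|^p\, dx$.

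Combining the comparison inequality with the Lipschitz bound yields the standard good-$\lambda$ density inequality: for every $\e>0$ there exist $N\gg 1$ and $\delta>0$ such that if $B_r(y)$ meets both $\{\M_{B_{10}}(|\nabla u|^p)\le \lambda\}$ and $\{\M_{B_{10}}(|\F|^{p/(p-1)})\le \delta\lambda\}$, then
\begin{equation*}
\bigl|\{\M_{B_{10}}(|\nabla u|^p) > N\lambda\}\cap B_r(y)\bigr| \le \e\, |B_r(y)|.
\end{equation*}
A Vitali covering of the bad set at each level, iterated dyadically through $\lambda = N^k\lambda_0$, converts this into an exponential Lebesgue-measure decay of $\{\M_{B_{10}}(|\nabla u|^p)>N^k\lambda_0\}$ plus a sum of datum tails. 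The reverse-H\"older property $w(E)/w(B)\le C(|E|/|B|)^{\nu}$, valid for $w\in A_s\subset A_\infty$, upgrades the $\e$-smallness to an $\e^{\nu}$-smallness in $w$-measure; choosing $N$ so that $\e^{\nu}N^{q/p}<1$ and summing the geometric series in $k$ delivers the weighted $L^{q/p}_w(B_1)$ estimate for $\M_{B_{10}}(|\nabla u|^p)$.

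For the Morrey upgrade, I would rerun the same covering-plus-summation on each ball $B_\rho(\bar x)\cap B_1$, multiply by $\varphi(B_\rho(\bar x))/w(B_\rho(\bar x))$, and take the supremum over $(\bar x,\rho)$. The base level $\lambda_0\sim \fint_{B_\rho(\bar x)} |\nabla u|^p\, dx$ then generates a term $\lambda_0^{q/p}\,\varphi(B_\rho(\bar x))$ that must be absorbed: using H\"older's inequality, the $\mathcal B_+$ scaling of $\varphi$ (which supplies a power-like lower bound $\varphi(B_\rho)/\varphi(B_R)\ge c(\rho/R)^\alpha$), and a standard maximal-function inequality, this term is controlled by $\|\nabla u\|_{L^p(B_{10})}^q$ plus the unweighted Morrey norm $\|\M_{B_{10}}(|\F|^{p/(p-1)})\|_{\calM^{1,\varphi^{p/q}}(B_{10})}^{q/p}$. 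The datum summation in $k$ contributes exactly $\|\M_{B_{10}}(|\F|^{p/(p-1)})\|_{\calM^{q/p,\varphi}_w(B_{10})}^{q/p}$. Taking $q$-th roots reproduces the three terms on the right-hand side of \eqref{weighted-morrey-est}.

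The main obstacle is the comparison step under only a modulus of continuity $\omega$ in the $z$-variable, with neither Lipschitz nor H\"older regularity available. The previous works \cite{NP,BPS} used uniqueness/stability for the $u$-frozen equation in an essential way; that luxury is absent here. The intended workaround is the two-step freezing indicated above: first freeze $u$ at the single value $u(y)$, paying a penalty of $\omega(\mathrm{osc}_{B_r} u)$, which is tamed because $u\in C^{0,\beta}_{\mathrm{loc}}$ with quantitative constants depending only on $p,n,\Lambda,M_0$; then freeze $x$ and use \eqref{smallness-1}. The delicate technical point is making $\kappa(\delta,r)$ uniformly small across all scales arising in the Vitali iteration, which will require restricting $r$ below a threshold depending on $\omega$ and then covering $B_1$ by balls of that scale, or alternatively absorbing the $\omega$-contribution into the good-$\lambda$ inequality itself. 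This uniform absorption, carried out simultaneously with the weighted and Morrey scalings, is the technical heart of the proof.
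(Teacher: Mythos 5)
Your overall perturbation/good-$\lambda$ architecture matches the paper's strategy in spirit, but there are two genuine gaps that would sink the argument as written.

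\textbf{1. The single-step freeze $z\mapsto u(y)$ needs H\"older continuity of $u$, which you do not have.} You propose to compare $u$ with the solution of $\div\bar\A_{B_r(y)}(u(y),\nabla v)=0$ and absorb the penalty $\omega(\mathrm{osc}_{B_r}u)$ using quantitative $C^{0,\beta}_{\mathrm{loc}}$ regularity of $u$ from De~Giorgi--Nash--Moser. But $u$ solves the inhomogeneous equation $\div\A(x,u,\nabla u)=\div\F$ with $\F$ only in $L^{p'}(B_{10};\R^n)$, and $L^{p'}$ data is not enough to give interior H\"older continuity of $u$ (one needs roughly $|\F|^{1/(p-1)}\in L^q$ with $q>n$). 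The paper circumvents this by a \emph{three-step} comparison $u\to h\to f\to v$: first compare $u$ with the solution $h$ of the $u$-frozen \emph{homogeneous} equation $\div\A(x,u,\nabla h)=0$ (controlled by $\|\F\|_{L^{p'}}$ via energy testing), then freeze $z=\bar h_{B_{4\sigma}}$, paying a penalty $\omega(\lambda\theta|u-\bar h_{B_{4\sigma}}|)$. The oscillation estimate from Lemma~\ref{lm:inter-regularity} is applied to $h$, not $u$, and $|u-h|$ is controlled by $\|\nabla(u-h)\|_{L^p}\lesssim\|\F\|_{L^{p'}}$ via Sobolev. Your ``workaround'' does not actually close this loop, and this is precisely the novelty that makes the approximation work for a general modulus $\omega$.

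\textbf{2. Scaling non-invariance is not resolved.} You acknowledge that making $\kappa(\delta,r)$ uniformly small across all scales is ``the technical heart,'' but your scheme gives no mechanism for it. The equation $\div\A(x,u,\nabla u)=\div\F$ is \emph{not} invariant under $u\mapsto u(rx)/(\mu r)$ because of the $z$-dependence, so a good-$\lambda$ inequality proved on unit balls does not transfer to the rescaled balls appearing in the Vitali iteration. The paper's fix is to enlarge the class to the two-parameter equations \eqref{ME}, which is closed under all needed rescalings; the key density estimate (Lemma~\ref{initial-density-est}) and the iteration (proof of Theorem~\ref{thm:main}) are carried out for this larger invariant class, with constants uniform in $\lambda,\theta$.

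For the Morrey upgrade, your description of what must happen (absorb the base level $\lambda_0\sim\fint_{B_\rho}|\nabla u|^p$ against $\varphi(B_\rho)$) is correct in spirit, but the paper achieves this via a specific device from \cite{MP2}: writing $\fint_{B_{2r}(\bar x)}|\nabla u|^p$ as a weighted integral against the ad hoc $A_t$ weight $\bar w(x)=\min\{|x-\bar x|^{-n+\e},(2r)^{-n+\e}\}$ and reapplying the weighted $L^p$ estimate (Theorem~\ref{thm:main} with $q=p$). ``H\"older plus a standard maximal-function inequality'' as you state it is not a complete substitute; the specific weight construction is what produces the unweighted Morrey term $\|\M_{B_{10}}(|\F|^{p'})\|_{\calM^{1,\varphi^{p/q}}}^{1/p}$ in \eqref{weighted-morrey-est}.
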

The above theorem holds true for any weight $w$ in the class $A_\infty$. When $q>p$ and certain  additional information about the weights and $ \varphi, \, \phi$ is 
given, we can further estimate the two quantities in \eqref{weighted-morrey-est} involving the maximal function
of $|\F|^{\frac{p}{p-1}}$  to obtain:

\begin{theorem}[weighted Morrey space estimate]\label{cor:weighted-morrey} Let 
$\A$
satisfy 
\eqref{structural-reference-1}--\eqref{structural-reference-3} with $p>1$.
 Let  $q>p$, $w\in A_\infty$, $v\in A_{\frac{q}{p}}$, $\varphi\in \mathcal B_+$ with $\sup_{x\in B_1} \varphi( B_2(x))<\infty$,
and $\phi\in \mathcal B_0$  satisfy
\begin{align}
 & [w, v^{1-(\frac{q}{p})'}]_{A_{\frac{q}{p}}}:= \sup_{B}{ \Big(\fint_{B} w\, dx\Big) \Big(\fint_{B} v^{1-(\frac{q}{p})'}\, dx\Big)^{\frac{q}{p}-1}  } 
  <\infty,\label{cond:wv}\\
 & \frac{v(2 B)}{w(2 B)} \frac{1}{\phi(2 B)} \leq C_* \, \frac{1}{\varphi(B)} 
\quad \mbox{for all balls  $B\subset \R^n$}.\label{cond:vwvarphi} 
 \end{align}
Then for any $M_0>0$, there exists a small constant 
$\delta=\delta( p, q,  n, \omega, \Lambda, M_0, [w]_{A_\infty})>0$  such that: if
\eqref{smallness-1} holds 
and $u$  is a weak solution of \eqref{GE-10}
satisfying $\|u\|_{L^\infty(B_{10})}\leq M_0$, we have
\begin{equation}\label{weighted-morrey-final}
\| \nabla u \|_{\calM^{q,\varphi}_w(B_1)}  \leq    C\left( \|\nabla u\|_{L^p(B_{10})}+  
\| |\F|^{\frac{1}{p-1}}\|_{\calM^{q,\phi}_v(B_{10})} \right).
\end{equation}
Here  $C >0$ is a  constant 
 depending only on  $q$, $p$, $n$, $\omega$,  $\Lambda$,  $M_0$,  $\varphi$, $\phi$, $C_*$,  $[w]_{A_\infty}$,  $[v]_{A_{\frac{q}{p}}}$, and  
 $[w, v^{1-(\frac{q}{p})'}]_{A_{\frac{q}{p}}}$.
\end{theorem}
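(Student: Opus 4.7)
The plan is to deduce Theorem~\ref{cor:weighted-morrey} from Theorem~\ref{thm:weighted-morrey} by controlling the two maximal-function Morrey norms on the right-hand side of \eqref{weighted-morrey-est} by the single weighted Morrey norm $\||\F|^{1/(p-1)}\|_{\calM^{q,\phi}_v(B_{10})}$. Since $w \in A_\infty = \cup_{s>1} A_s$, I select $s \in (1,\infty)$ with $w \in A_s$ and $[w]_{A_s}$ controlled by $[w]_{A_\infty}$, and apply Theorem~\ref{thm:weighted-morrey} with this $s$. Writing $g := |\F|^{p/(p-1)}$ and using the identity $\|g\|_{\calM^{q/p,\phi}_v(B_{10})}^{1/p} = \||\F|^{1/(p-1)}\|_{\calM^{q,\phi}_v(B_{10})}$, the goal reduces to proving the two-weight Morrey-space maximal inequality
\begin{equation*}
\|\M_{B_{10}} g\|_{\calM^{1,\varphi^{p/q}}(B_{10})} + \|\M_{B_{10}} g\|_{\calM^{q/p,\varphi}_w(B_{10})} \leq C \|g\|_{\calM^{q/p,\phi}_v(B_{10})}.
\end{equation*}

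I would establish this via the standard local/nonlocal decomposition. Fix a ball $B = B_r(\bar x) \subset B_{10}$ and write $g = g\chi_{2B} + g\chi_{B_{10}\setminus 2B}$. For the local piece I would invoke the two-weight strong-type $L^{q/p}$ inequality for $\M$, $\int (\M(g\chi_{2B}))^{q/p}\, w\, dx \leq C \int_{2B} g^{q/p}\, v\, dx$, which follows from the joint Muckenhoupt condition \eqref{cond:wv} combined with $w \in A_\infty$ (a result in the spirit of P\'erez's two-weight theorem); insertion of this bound together with \eqref{cond:vwvarphi} cancels the normalizing factor $\varphi(B)/w(B)$ and controls the local contribution by $\|g\|_{\calM^{q/p,\phi}_v}$. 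For the tail I would use the pointwise bound $\M(g\chi_{B_{10}\setminus 2B})(x) \leq C \sup_{k \geq 1} \fint_{B_{2^k r}(\bar x) \cap B_{10}} g$ valid for $x \in B$, combined with the two-weight H\"older inequality
\begin{equation*}
\fint_{B'} g\, dx \leq \Big(\fint_{B'} g^{q/p} v\, dx\Big)^{p/q} \Big(\fint_{B'} v^{1-(q/p)'}\, dx\Big)^{1-p/q}
\end{equation*}
and \eqref{cond:wv}, which bounds the second factor by $\bigl(C|B'|/w(B')\bigr)^{1-p/q}$; the condition $\varphi \in \mathcal B_+$ then makes the resulting sum geometric in $k$, ensuring convergence. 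The $\calM^{1,\varphi^{p/q}}$-norm is handled in parallel, either by the H\"older embedding $\|h\|_{\calM^{1,\varphi^{p/q}}} \leq C \|h\|_{\calM^{q/p,\varphi}_w}$ when one can arrange $w \in A_{q/p}$, or by repeating the splitting argument with a $v$-weighted H\"older inequality in place of the $w$-weighted one (exploiting $v \in A_{q/p}$ both to run the one-weight bound on $\M$ and to estimate $\fint_{B'} v^{1-(q/p)'}$).

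The main technical obstacle will be the two-weight strong-type $L^{q/p}$ inequality for $\M$ used in the local part: the $A_{q/p}$-pair condition \eqref{cond:wv} alone yields only weak type (Muckenhoupt--Wheeden), and the promotion to strong type essentially relies on the reverse H\"older improvement enjoyed by $A_\infty$ weights. A secondary difficulty lies in careful bookkeeping of constants, so that the final bound depends only on the quantities listed in the statement and not on the auxiliary $A_s$ index extracted from $w \in A_\infty$; once the maximal inequality is established, combining it with Theorem~\ref{thm:weighted-morrey} yields Theorem~\ref{cor:weighted-morrey} at once.
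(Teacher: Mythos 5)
Your overall strategy matches the paper's: invoke Theorem~\ref{thm:weighted-morrey} (after selecting $s$ with $w\in A_s$ via $w\in A_\infty$), then bound the two maximal-function Morrey norms on its right-hand side by $\| |\F|^{1/(p-1)}\|_{\calM^{q,\phi}_v(B_{10})}$. However, there are two issues with the execution.

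First, you propose to re-derive the weighted Morrey maximal inequality via a local/nonlocal splitting, but the paper has already established exactly this in Section~\ref{sub:two-weight-est} (Theorem~\ref{thm:maximal-est} and Corollaries~\ref{cor:two-weight-case}, \ref{cor:one-weight-case}). The efficient route is to cite those results rather than reprove them. More importantly, for the local piece you assert that the strong-type $L^{q/p}$ two-weight inequality for $\M$ follows from \eqref{cond:wv} ``combined with $w\in A_\infty$.'' That is not the sufficient condition used in the paper, nor a standard one: the paper's Corollary~\ref{cor:two-weight-case} relies on condition ({\bf B}), namely $[w,v^{1-(q/p)'}]_{A_{q/p}}<\infty$ together with $v^{1-(q/p)'}\in A_\infty$, and the latter is supplied by the hypothesis $v\in A_{q/p}$ via Lemma~\ref{weight:basic-pro}(2). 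You have $v\in A_{q/p}$ available and should use it here; relying on $w\in A_\infty$ at this step is a genuine gap.

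Second, for the term $\|\M_{B_{10}}(|\F|^{p'})\|_{\calM^{1,\varphi^{p/q}}(B_{10})}$, your option (a) (assuming $w\in A_{q/p}$) is not available under the stated hypotheses — $w$ is only assumed $A_\infty$. Your option (b) is the right idea but left vague. The paper's actual argument is worth internalizing: by H\"older with the weight $v$ and \eqref{cond:wv}, one shows
\[
\| \M_{B_{10}}(|\F|^{p'})\|_{\calM^{1, \varphi^{p/q}}(B_{10})}
\le [w,v^{1-(q/p)'}]_{A_{q/p}}^{p/q}\,
\| \M_{B_{10}}(|\F|^{p'})\|_{\calM^{q/p,\hat\varphi}_v(B_{10})},
\qquad
\hat\varphi(B):=\frac{v(B)}{w(B)}\varphi(B),
\]
and then \eqref{cond:vwvarphi} together with $\phi\in\mathcal B_0$ and the doubling of $w$ show $\phi$ dominates $\hat\varphi$ in the sense of \eqref{mu1-mu2-sufficient}, so the one-weight Corollary~\ref{cor:one-weight-case} (with $v\in A_{q/p}$) closes the estimate. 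You should fill in these two steps, and in particular replace the appeal to $w\in A_\infty$ by an appeal to $v\in A_{q/p}$, to make your argument correct.
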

\begin{remark}
 If $\phi(2B)\leq C \varphi(B)$ for all balls $B$, then  conditions \eqref{cond:wv}--\eqref{cond:vwvarphi}  imply that $v(x)\leq C C_* w(x)$ a.e. and 
   $v\in A_{\frac{q}{p}}$ with 
  $[v]_{A_{\frac{q}{p}}} \leq C C_* 
 [w, v^{1-(\frac{q}{p})'}]_{A_{\frac{q}{p}}}$.
\end{remark}

Estimate \eqref{weighted-morrey-final} is a two-weight inequality which allows  the possibility  to acquire the regularity for $\nabla u$ in a 
weighted Morrey space that is different from the functional space that the data $|\F|^{\frac{1}{p-1}}$ belongs to.
In particular, the result  in Theorem~\ref{cor:weighted-morrey}  is even new when applied to equations of special form \eqref{like-p-Laplacian}
whose gradient estimates in the classical  Morrey spaces  are obtained in  \cite{Ra,MP1,MP2}.
Our results also improve the $L^q$ gradient estimates established in \cite{NP,BPS} for equation \eqref{GE}
since the principal part $\A(x,z,\xi)$ is merely assumed to be   continuous in the $z$ variable instead of being H\"older continuous. 

One of the main difficulties in proving the above main results is 
that equations of the  form  \eqref{GE-10} are not  invariant with respect to dilations and rescaling of domains due to the dependence of $\A(x,z,\xi)$ on the $z$ variable. To handle this
issue, we use the key idea introduced in \cite{HNP1,NP} by  enlarging  the class of equations under consideration in a suitable way. Precisely, 
we  consider the associated quasilinear elliptic  equations with two parameters, i.e.  equation \eqref{ME} below.
The class of these  equations is the smallest one that is invariant with respect to dilations and rescaling 
of domains and that contains equations of the form \eqref{GE-10}.
Given the invariant structure, we employ the direct argument 
in \cite{AM,BPS} to show that  the gradient of the solution $u$ can be approximated
by a bounded gradient in $L^p$ norm (see
Proposition~\ref{lm:localized-compare-gradient}). It is essential for us  that  
this approximation is 
uniform with respect to the two parameters. With this and through a standard procedure, we derive some density estimates 
and obtain Theorem~\ref{thm:main} about gradient estimates in weighted $L^q$ spaces (see Subsections~\ref{sub:density-est} and \ref{sub:Lebesgue-Spaces}). This estimate plays a central role in 
proving our main results. Indeed, by using 
 the trick in \cite{MP2} we show in Subsection~\ref{sec:Morrey-Spaces} that
Theorem~\ref{thm:weighted-morrey} can be derived as  a consequence of Theorem~\ref{thm:main}.
In order to prove Theorem~\ref{cor:weighted-morrey}, we need to further estimate the last two terms  in inequality \eqref{weighted-morrey-est} involving the maximal function
of $|\F|^{p'}$.
This is another difficulty that needs to overcome and it can be solved  if one has 
a suitable estimate for the maximal function in weighted Morrey spaces. This type of estimate is proved 
by Chiarenza and Frasca  \cite{ChF} in the unweighted setting, and there are some recent works 
\cite{GMOS,GORS,KS,Na}  establishing  the estimate in the weighted setting under certain conditions. However, they are inadequate for our purpose
and we need to extend these results by attaining  in Theorem~\ref{thm:maximal-est} a weighted Morrey space estimate 
for the maximal function.  This two-weight 
inequality holds true  for quite general weights and we believe that it is of independent interest.

We end the introduction by pointing out that our method of establishing   interior gradient estimates in weighted Morrey spaces could be 
combined with the boundary techniques used in \cite{BR,BPS,MP1,MP2} to derive global estimates for Reifenberg flat  domains and homogeneous 
Dirichlet boundary condition. Furthermore, 
by following \cite{Ph} 
it might be  possible to weaken 
the assumption $u\in L^\infty $ in Theorem~\ref{cor:weighted-morrey} by assuming only that $u\in BMO$.

\bigskip

The organization of the paper is as follows. We recall some basic properties of 
$A_s$ weights in Subsection~\ref{sub:weight} and state a key result (Theorem~\ref{thm:main}) in Subsection~\ref{sub:eqparameters}
about gradient estimates in weighted $L^q$ spaces. In Section~\ref{sub:two-weight-est}, we derive a weighted Morrey space estimate 
for the maximal function (see  Theorem~\ref{thm:maximal-est} and its corollaries).  Section~\ref{approximation-gradient} is devoted to proving
Proposition~\ref{lm:localized-compare-gradient}
which shows  that gradients of weak solutions to  two-parameter equation  \eqref{ME} can be approximated by bounded gradients under some smallness conditions. Using this crucial result,
we establish  in Section~\ref{interior-density-gradient} some density estimates 
for gradients and then prove Theorem~\ref{thm:main}. In this same section,  the main results stated in Theorem~\ref{thm:weighted-morrey}
 and Theorem~\ref{cor:weighted-morrey} are derived as consequences of Theorem~\ref{thm:main}.
 
\section{Preliminaries and a key result}\label{sec:preliminary}
\subsection{Some basic properties of $A_s$ weights}\label{sub:weight}
Let us recall some  properties of weights which can be found in \cite[Chapter~9]{Gra}. Given a weight $w$ and a measurable set $E\subset \R^n$, we use the notation $dw(x)= w(x)\, dx$ and  $w(E)=\int_E w(x)\, dx$.

\begin{lemma}\label{weight:basic-pro} 
Let $w\in A_s$ for some $1< s <\infty$. Then
\begin{enumerate}
\item[1)] There exist $0<\beta \leq 1$ and $K>0$ depending only on $n$ and $[w]_{A_s}$ such that 
\begin{equation}\label{strong-doubling}
[w]_{A_s}^{-1}\, \Big(\frac{|E|}{|B|}\Big)^s \leq  \frac{w(E)}{w(B)}\leq K\,  \Big(\frac{|E|}{|B|}\Big)^\beta
\end{equation}
  for all balls $B$ and all measurable sets $E\subset  B$. In particular, $w$ is doubling with
   $w(2B)\leq 2^{ns} [w]_{A_s}  w(B)$.
  \item[2)] The function $w^{1-s'}$ is in $A_{s'}$ with characteristic constant 
$\, [w^{1-s'}]_{A_{s'}}=[w]_{A_s}^{\frac{1}{s-1}}.
$
  \item[3)] The classes $A_s$ are increasing as $s$ increases; precisely, for $1< s<q\leq \infty$ we have $[w]_{A_q}\leq [w]_{A_s}$. 
  \item[4)] For any $1<q\leq \infty$, we have
  $\,\, A_q= \displaystyle\bigcup_{1< s<q} A_s$.
 \end{enumerate}
\end{lemma}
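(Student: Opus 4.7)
My plan is to treat the four items in sequence, leaning on standard Muckenhoupt machinery.

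For item (1), I would split the double inequality. The left inequality is the easier one: writing $|E| = \int_E w^{1/s}\cdot w^{-1/s}\, dx$ and applying H\"older with exponents $s, s'$ yields $|E|^s \leq w(E)\bigl(\int_E w^{-1/(s-1)}\, dx\bigr)^{s-1}$; enlarging the last integral from $E$ to $B$ and plugging in the $A_s$ bound gives the claim. The right inequality is the delicate one and is where I expect the main obstacle: it requires the self-improvement (reverse H\"older) property of $A_s$ weights, i.e.\ the existence of $\varepsilon = \varepsilon(n,[w]_{A_s}) > 0$ with $\bigl(\fint_B w^{1+\varepsilon}\bigr)^{1/(1+\varepsilon)} \leq C \fint_B w$. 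Given this, one applies H\"older's inequality on $E\subset B$ to obtain $w(E)/w(B) \leq C(|E|/|B|)^{\varepsilon/(1+\varepsilon)}$, proving the upper bound with $\beta = \varepsilon/(1+\varepsilon)$. The explicit doubling constant $w(2B)\leq 2^{ns}[w]_{A_s}\, w(B)$ follows from the left inequality by taking $E = B$ and replacing the ambient ball by $2B$.

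For item (2), the proof is purely algebraic. Setting $\sigma = w^{1-s'}$ and using $s' - 1 = 1/(s-1)$, one checks $\sigma^{-1/(s'-1)} = w$, so the $A_{s'}$ quantity of $\sigma$ is $\sup_B \bigl(\fint_B w^{1-s'}\bigr)\bigl(\fint_B w\bigr)^{s'-1}$. Raising this to the $(s-1)$-th power recovers $[w]_{A_s}$ exactly, giving the stated identity $[\sigma]_{A_{s'}} = [w]_{A_s}^{1/(s-1)}$.

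For item (3), given $1 < s < q$, I would apply Jensen's inequality to the weight $w^{-1/(s-1)}$ with the concave map $t \mapsto t^{(s-1)/(q-1)}$ (the exponent lies in $(0,1]$), obtaining $\bigl(\fint_B w^{-1/(q-1)}\bigr)^{q-1} \leq \bigl(\fint_B w^{-1/(s-1)}\bigr)^{s-1}$, which immediately yields $[w]_{A_q} \leq [w]_{A_s}$.

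For item (4), one inclusion follows from (3). The reverse inclusion is the content of the openness of $A_q$: I would use (2) to transfer $w \in A_q$ to $w^{1-q'} \in A_{q'}$, apply the reverse H\"older property to $w^{1-q'}$, and translate the gain back via (2) to conclude $w \in A_{q-\delta}$ for some small $\delta > 0$. This step again reduces to the self-improvement property, which is really the crux underlying the whole lemma; once it is taken as a black box, everything else is bookkeeping.
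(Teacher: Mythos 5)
The paper does not prove this lemma at all; it is stated as a recollection of standard facts with a citation to Grafakos (\cite[Chapter~9]{Gra}), so there is no authorial proof to compare against. Your sketch reproduces the standard Grafakos-style arguments, and all four items are handled correctly: the H\"older/reverse-H\"older split for item (1), the algebraic verification of $[w^{1-s'}]_{A_{s'}}=[w]_{A_s}^{1/(s-1)}$ for item (2), Jensen with exponent $(s-1)/(q-1)\in(0,1]$ for item (3), and the openness argument (again via reverse H\"older) for item (4). You are also right to flag reverse H\"older as the only non-elementary ingredient; everything else is bookkeeping.

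One small point worth tightening: the lemma as stated allows $q=\infty$ in items (3) and (4), and your arguments as written only cover $1<s<q<\infty$. For item (3) with $q=\infty$ the monotonicity $[w]_{A_\infty}\le [w]_{A_s}$ follows from the same Jensen idea applied to the exponential, namely $\exp\bigl(\fint_B \log w^{-1}\bigr)\le \bigl(\fint_B w^{-1/(s-1)}\bigr)^{s-1}$. For item (4) with $q=\infty$ the nontrivial inclusion $A_\infty\subset\cup_{s<\infty}A_s$ cannot be obtained by the duality trick through $w^{1-q'}$ (since $q'=1$); it is instead one of the standard characterizations of $A_\infty$, proved directly from reverse H\"older for $w$ together with the distributional/level-set estimate. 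This does not change the shape of your argument, but these endpoint cases deserve explicit mention since the lemma includes them.
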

  
 \begin{lemma}[Characterizations of $A_\infty$ weights] Suppose that $w$ is a weight. Then $w$ is in $A_\infty$ 
 if and only if there exist $0<A, \, \nu<\infty$ such that  for all balls $B$ and all measurable sets $E\subset  B$  we have
\begin{equation}\label{charac-A-infty}
\frac{w(E)}{w(B)}\leq A \Big(\frac{|E|}{|B|}\Big)^{\nu}.
\end{equation}
When $w\in   A_\infty$,  the above constants $A$ and $\nu$ depend only on $n$ and  $[w]_{A_\infty}$. Conversely, given constants $A$ and $\nu$  satisfying  \eqref{charac-A-infty},  we have 
$[w]_{A_\infty}\leq C(n,A,\nu)$.
\end{lemma}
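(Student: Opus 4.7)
The stated equivalence is classical (see, e.g., \cite[Ch.~9]{Gra}), and both directions turn on the reverse H\"older inequality for $A_\infty$ weights.

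For the \emph{forward direction} ($w \in A_\infty \Rightarrow$ \eqref{charac-A-infty}), I would invoke the Coifman--Fefferman reverse H\"older inequality: there exist $\epsilon, C_1 > 0$, depending only on $n$ and $[w]_{A_\infty}$, such that $\bigl(\fint_B w^{1+\epsilon}\, dx\bigr)^{1/(1+\epsilon)} \leq C_1 \fint_B w\, dx$ for every ball $B$. For any measurable $E \subset B$, H\"older's inequality then gives
\begin{equation*}
w(E) \leq |E|^{\epsilon/(1+\epsilon)} \Bigl(\int_B w^{1+\epsilon}\, dx\Bigr)^{1/(1+\epsilon)} \leq C_1 \Bigl(\frac{|E|}{|B|}\Bigr)^{\nu} w(B),
\end{equation*}
with $\nu := \epsilon/(1+\epsilon)$, yielding \eqref{charac-A-infty} with $A = C_1$ and $\nu$ depending only on $n$ and $[w]_{A_\infty}$.

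For the \emph{converse direction} (\eqref{charac-A-infty} $\Rightarrow w \in A_\infty$), the plan is to show that \eqref{charac-A-infty} forces $w \in A_s$ for some finite $s = s(n, A, \nu)$, after which Jensen's inequality applied to $e^x$ yields $\bigl(\fint_B w^{-1/(s-1)}\bigr)^{s-1} \geq \exp\bigl(\fint_B \log w^{-1}\bigr)$, so $[w]_{A_\infty} \leq [w]_{A_s} \leq C(n, A, \nu)$. To prove $w \in A_s$, I would first reformulate \eqref{charac-A-infty} as: there exists $\delta := (1/(2A))^{1/\nu} \in (0,1)$ such that $|E|/|B| \leq \delta$ implies $w(E)/w(B) \leq 1/2$ for every measurable $E \subset B$. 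Applying a Calder\'on--Zygmund stopping-time decomposition at each level $\lambda > \fint_B w$---splitting $\{x \in B : \M_B w(x) > \lambda\}$ into disjoint dyadic subcubes $Q_j$ with $\fint_{Q_j} w \sim \lambda$---and iterating this measure condition yields polynomial decay of $w(\{\M_B w > \lambda\})$ in $\lambda$. Integrating against $\lambda^{\epsilon - 1}\, d\lambda$ for $\epsilon > 0$ small then produces a reverse H\"older inequality, from which $w \in A_s$ follows by a H\"older estimate on $\bigl(\fint_B w\bigr)\bigl(\fint_B w^{-1/(s-1)}\bigr)^{s-1}$.

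The \emph{main obstacle} is the converse: specifically, the Calder\'on--Zygmund stopping-time iteration that upgrades the measure condition \eqref{charac-A-infty} to a reverse H\"older inequality; the resulting exponent and constants must be tracked carefully through the iteration so that the final bound on $[w]_{A_\infty}$ is indeed of the form $C(n, A, \nu)$.
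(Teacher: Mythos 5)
The paper does not prove this lemma; it is recalled as a known fact from \cite[Chapter~9]{Gra} (Grafakos, Theorem~9.3.3), so there is no internal proof of the paper to compare against. Your forward direction is correct: the reverse H\"older inequality for $A_\infty$ weights plus one application of H\"older's inequality gives \eqref{charac-A-infty} with $\nu = \epsilon/(1+\epsilon)$ and constants depending only on $n$ and $[w]_{A_\infty}$.

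The converse direction, however, has a genuine gap in its final step. The Calder\'on--Zygmund stopping-time iteration you describe, applied to the comparability condition ``$|E|/|B|\le\delta \Rightarrow w(E)/w(B)\le 1/2$,'' does yield a reverse H\"older inequality for $w$, i.e.\ control of $\fint_B w^{1+\epsilon}$ by $(\fint_B w)^{1+\epsilon}$. But $A_s$ membership requires control of $\big(\fint_B w^{-1/(s-1)}\big)^{s-1}$ \emph{from above} in terms of $\big(\fint_B w\big)^{-1}$, that is, integrability of a \emph{negative} power of $w$. This does not follow from reverse H\"older for $w$ by a H\"older estimate: H\"older applied to $1=\fint_B w^{1/s}\,w^{-1/s}$ always gives the trivial lower bound $\big(\fint_B w\big)\big(\fint_B w^{-1/(s-1)}\big)^{s-1}\ge 1$, which is the opposite of what $A_s$ demands. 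To close the argument one must either (i) run a second, ``dual'' stopping-time iteration that upgrades the comparability condition to a bound of the form $|E|/|B|\le A'\big(w(E)/w(B)\big)^{\nu'}$ --- i.e.\ Lebesgue measure is $A_\infty$ with respect to $dw$ --- after which the H\"older argument of the forward direction (with the roles of $dx$ and $dw$ exchanged) does yield $\fint_B w^{-\epsilon'}\le C\big(\fint_B w\big)^{-\epsilon'}$ and hence $w\in A_s$; or (ii) more directly, since the lemma only asks for $[w]_{A_\infty}<\infty$, bound $\exp\big(\fint_B\log(\lambda_B/w)\big)$ (with $\lambda_B=\fint_B w$) by estimating the distribution $|\{x\in B:\,w(x)<\lambda_B e^{-t}\}|$ via the same kind of dyadic stopping-time, without passing through $A_s$ at all. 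Either fix is standard, but as written your ``H\"older estimate'' step does not establish what it claims.
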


\subsection{Quasilinear  equations with two parameters}
\label{sub:eqparameters}
Our goal is to derive interior gradient estimates for weak solutions to \eqref{GE-10}.
Let us consider a function  $u\in W_{loc}^{1,p}(B_{r R})$  such that $u(y)\in \overline\K$ for a.e. $y\in B_{r R}$ and
$u$ satisfies 
$
\div \A(y,u,\nabla u) =  \div \F$ in $B_{r R }$
in the  sense of distribution. Then the rescaled  function 
\begin{equation}\label{scaled-solutions}
 v(x) := \frac{u(r x)}{\mu\,  r} \quad \mbox{for}\quad r, \, \mu>0
 \end{equation}
has the properties: $v(x)\in \frac{1}{\mu r}\overline\K$ for a.e. $x\in B_R$ and $v$ solves the equation
\begin{equation*}
\div \A_{\mu, r}(x,\mu r v,\nabla v) =  \div \F_{\mu, r}\quad \text{in}\quad B_R
\end{equation*}
in the distributional sense. Here,
\begin{equation*} 
\A_{\mu, r}(x,z,\xi) := \frac{\A(rx,  z, \mu \xi)}{\mu^{p-1}} \quad\mbox{ and }\quad  \F_{\mu, r}(x) := \frac{\F(r x)}{\mu^{p-1}}. 
 \end{equation*} 
 It is clear that if  $\A: B_{r R}\times \overline\K\times \R^n \longrightarrow \R^n$ satisfies  conditions \eqref{structural-reference-1}--\eqref{structural-reference-3}, then 
the rescaled vector field $ \A_{\mu, r}: B_{R}\times \overline\K\times \R^n \longrightarrow \R^n$ also satisfies these conditions 
with the same constants. 

The above observation  shows that  equations of type \eqref{GE-10} are not invariant with respect to the standard scalings 
\eqref{scaled-solutions}. This presents a serious obstacle in obtaining weighted $L^q$ estimates for their solutions  
by using the methods in \cite{BR,MP1,MP2}. 
Here we follow the idea in \cite{HNP1,NP}  by considering associated quasilinear equations with two parameters 
\begin{equation}\label{ME}
\div \Big[ \frac{\A(x, \lambda \theta u, \lambda \nabla u)}{\lambda^{p-1}}\Big]=  \div \F \quad \mbox{in }\, B_{10}
\end{equation}
 with $\lambda,\, \theta>0$. The class of these equations is the smallest one that is invariant with respect to the 
 transformations \eqref{scaled-solutions}
and that contains equations of type \eqref{GE-10}. Indeed,  if $u$ solves \eqref{ME} and $v$ is given by  \eqref{scaled-solutions}, then $v$ 
 satisfies an equation of similar form, namely,  $
\div \big[ \frac{\A'(y, \lambda' \theta' v, \lambda' \nabla v)}{\lambda'^{p-1}}\big]=  \div \F'$ in  $B_{\frac{10}{r}}$
with $\A'(y, z, \xi) := \A(r y, z, \xi)$, $\F'(y) := \F(r y )/\mu^{p-1}$, $\lambda' := \mu \lambda$,  and $\theta':=r\theta$.

Let us give the precise definition of weak solutions that is used throughout the paper.
\begin{definition}
\label{weak-sol}
Let $\F\in L^{p'}(B_{10};\R^n)$. A function $u\in W^{1,p}_{\text{loc}}(B_{10})$ is called a 
weak solution  of \eqref{ME} if  $u(x)\in \frac{1}{\lambda \theta}\overline\K \,$ for a.e. $x\in B_{10}$ and 
\begin{equation*}
\int_{B_{10}} \Big\langle\frac{ \A (x,\lambda \theta u,\lambda \nabla u)}{\lambda^{p-1}}, \nabla \varphi \Big\rangle\, dx 
=\int_{B_{10}} \langle \F, \nabla \varphi\rangle \, dx\qquad \forall \varphi \in W_0^{1,p}(B_{10}).
 \end{equation*}
\end{definition}
Fix a number $M_0>0$. Then for a ball $B\subset \R^n$, we define 
\begin{equation}\label{def:Theta}
\Theta_{B}(\A):= \sup_{z\in \overline\K \cap [-M_0, M_0]} \fint_{B}\Big[\sup_{ \xi\neq 0}\frac{|\A(x,z,\xi) - \bar\A_{B}(z,\xi)|}{|\xi|^{p-1}}\Big]\, dx.
\end{equation}
In order to achieve the main results stated in Section~\ref{sec:Intro}, we will prove the following gradient estimate in 
weighted $L^q$ spaces:

\begin{theorem}\label{thm:main}
 Let 
 $\A$ satisfy \eqref{structural-reference-1}--\eqref{structural-reference-3} with $p>1$, 
 and let $w$ be an $A_s$ weight for some $1<s<\infty$. 
For any $q\geq p$ and $M_0>0$,  there exists 
a  constant  $\delta=\delta( p, q,  n, \omega, \Lambda, M_0,s, [w]_{A_s})>0$    such that:  if $\lambda>0$, $\theta>0$,
\begin{equation}\label{ABCD}
\sup_{0<\rho\leq \frac15}\sup_{y\in B_\frac32} \Theta_{B_\rho(y)}(\A)\leq  \delta,
\end{equation}
and  $u$ is a weak solution of \eqref{ME} satisfying 
 $\|u\|_{L^\infty(B_2)}\leq \frac{M_0}{\lambda \theta}$, then
 \begin{equation}\label{main-estimate}
\fint_{B_1} |\nabla u|^q\, dw \leq  C\Bigg(
\|\nabla u\|_{L^p(B_2)}^q + \fint_{B_\frac32}\M_{B_2}(|\F|^{\frac{p}{p-1}})^{\frac{q}{p}} \, dw  \Bigg).
\end{equation}
Here  $C>0$ is a constant 
 depending only on  $q$, $p$, $n$, $\omega$,  $\Lambda$,   $M_0$, $s$, and $[w]_{A_s}$.
\end{theorem}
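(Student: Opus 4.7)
The argument combines the gradient approximation of Proposition~\ref{lm:localized-compare-gradient} with a Caffarelli--Peral style good-$\lambda$ inequality on level sets of $\mathcal{N}:=\M_{B_2}(|\nabla u|^p\chi_{B_2})$, and then converts from Lebesgue measure to the weight $w$ via the strong doubling bound \eqref{strong-doubling}. The hypothesis $\|u\|_{L^\infty(B_2)}\leq M_0/(\lambda\theta)$ says that $\lambda\theta u(x)\in[-M_0,M_0]$, which is precisely the range on which the BMO-type oscillation $\Theta_B(\A)$ from \eqref{def:Theta} is controlled by the smallness \eqref{ABCD}; this is what allows the comparison step to benefit from the freezing of coefficients. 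Working with the invariant family \eqref{ME} rather than \eqref{GE-10} directly is essential because the rescaling $v(x)=u(rx)/(\mu r)$ keeps us in the same class with the same structural constants, so the approximation estimates remain uniform in $\lambda,\theta$.

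\textbf{Step 1: density estimate.} Setting $\mathcal{G}:=\M_{B_2}(|\F|^{p/(p-1)}\chi_{B_2})$, I would show there exists $N=N(n,p,\Lambda)>1$ such that, for every $\e>0$ one can find $\eta=\eta(\e)>0$ and the $\delta$ appearing in \eqref{ABCD} so that for every ball $B_r(y)$ with $5B_r(y)\subset B_{3/2}$,
\begin{equation*}
\bigl|\{\mathcal{N}>N\alpha^p\}\cap B_r(y)\bigr|\geq \e\,|B_r(y)|\ \Longrightarrow\ B_r(y)\subset \{\mathcal{N}>\alpha^p\}\cup\{\mathcal{G}>(\eta\alpha)^p\}.
\end{equation*}
Arguing by contradiction, a point in each complement combined with the invariance of \eqref{ME} under rescaling lets one apply Proposition~\ref{lm:localized-compare-gradient} to produce a comparison function $v$ with $\|\nabla v\|_{L^\infty(B_r(y))}^p\leq c_0\alpha^p$ and $\fint_{B_r(y)}|\nabla u-\nabla v|^p\leq \sigma(\e)\alpha^p$, where $\sigma(\e)\to 0$ as $\e\to 0$; the weak-$(1,1)$ bound for $\M$ applied to $|\nabla u-\nabla v|^p$ then forces $|\{\mathcal{N}>N\alpha^p\}\cap B_r(y)|\lesssim \sigma(\e)|B_r(y)|/(N-c_0)$, contradicting the assumption once $\e$ is chosen small.

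\textbf{Step 2: weighted iteration and layer cake.} A Vitali-type covering lemma of Krylov--Safonov flavor upgrades the implication above into a distributional inequality on $B_1$. By \eqref{strong-doubling}, the same covering argument performed with $w$-measure instead of Lebesgue measure yields
\begin{equation*}
w\bigl(\{\mathcal{N}>N\alpha^p\}\cap B_1\bigr)\leq CK\e^\beta\,w\bigl(\{\mathcal{N}>\alpha^p\}\cap B_{3/2}\bigr)+w\bigl(\{\mathcal{G}>(\eta\alpha)^p\}\cap B_{3/2}\bigr).
\end{equation*}
Fix $\e$ so small that $CK\e^\beta N^{q/p}\leq 1/2$, and take $\alpha_0^p$ proportional to $\fint_{B_2}|\nabla u|^p$, large enough that $w(\{\mathcal{N}>\alpha_0^p\}\cap B_{3/2})\leq\tfrac12 w(B_1)$ by Chebyshev and \eqref{strong-doubling}. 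Iterating on $\alpha_k=N^{k/p}\alpha_0$ produces geometric decay of $N^{kq/p}w(\{\mathcal{N}>N^k\alpha_0^p\}\cap B_1)$; inserting this into the layer-cake identity $\int_{B_1}\mathcal{N}^{q/p}\,dw=\tfrac{q}{p}\int_0^\infty \beta^{q/p-1}w(\{\mathcal{N}>\beta\}\cap B_1)\,d\beta$ gives
\begin{equation*}
\int_{B_1}|\nabla u|^q\,dw\leq \int_{B_1}\mathcal{N}^{q/p}\,dw\leq C\,w(B_1)\Bigl(\fint_{B_2}|\nabla u|^p\Bigr)^{q/p}+C\int_{B_{3/2}}\mathcal{G}^{q/p}\,dw.
\end{equation*}
Dividing by $w(B_1)$ and bounding $\bigl(\fint_{B_2}|\nabla u|^p\bigr)^{q/p}\leq |B_2|^{-q/p}\|\nabla u\|_{L^p(B_2)}^q$ delivers \eqref{main-estimate}.

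\textbf{Main obstacle.} The crux is arranging Step 1 so that the threshold $N$, the function $\sigma(\e)$, and the required $\delta$ are all independent of $\lambda,\theta$. This is precisely what the invariant formulation \eqref{ME} buys: at each rescaling inside the Vitali iteration the equation stays in the same class with transformed parameters but the same structural constants, so Proposition~\ref{lm:localized-compare-gradient} can be invoked uniformly. Without this invariance the $z$-dependence of $\A$ would make the frozen-coefficient comparison deteriorate as one zooms in, and the Caffarelli--Peral iteration would fail to close.
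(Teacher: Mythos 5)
Your proposal follows essentially the same Caffarelli--Peral good-$\lambda$ strategy as the paper: a density estimate on balls obtained from Proposition~\ref{lm:localized-compare-gradient} (the paper's Lemma~\ref{initial-density-est}), upgraded by a Vitali-type covering to a distributional inequality (the paper's Lemma~\ref{second-density-est}), then iterated and fed into the layer-cake formula, with a final normalization to launch the first step. The only cosmetic distinction is that you state the density estimate at an arbitrary level $\alpha$, whereas the paper proves it at level $1$ and generates the other levels in the iteration by rescaling the solution and the parameter $\lambda$ (setting $u_k=u/N^{k/p}$, $\lambda_k=N^{k/p}\lambda$), which is the same device you invoke; likewise the paper does the Lebesgue-to-$w$ conversion inside the density lemma via the $A_\infty$ characterization rather than inside the covering step, and runs the whole covering on a small ball $B_{\sigma/10}$ before passing to $B_1$ by translation and a finite cover.
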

The proof of this key result will be given in Subsection~\ref{sub:Lebesgue-Spaces}, and we will demonstrate
in Subsection~\ref{sec:Morrey-Spaces} that the 
gradient estimates  in weighted Morrey spaces (Theorem~\ref{thm:weighted-morrey}) can 
be derived  as a direct consequence of Theorem~\ref{thm:main}. Notice  that by combining  
 Theorem~\ref{thm:main} with  Muckenhoup's  strong type  weighted estimate for the maximal function
(see \cite{M} and \cite[Theorem~9.1.9]{Gra}), we immediately get: 

\begin{corollary}[weighted $L^q$ space estimate]\label{cor:weighted}
Let $\A$ satisfy
\eqref{structural-reference-1}--\eqref{structural-reference-3} with $p>1$.
Then for any $q>p$, $M_0>0$, and any weight $w\in A_{\frac{q}{p}}$, there exists a  constant  $\delta>0$  such that: if
 $\lambda>0$, $\theta>0$, \eqref{ABCD} holds,
and  $u$ is a weak solution of \eqref{ME} satisfying 
 $\|u\|_{L^\infty(B_2)}\leq \frac{M_0}{\lambda \theta}$,  we have
\begin{equation*}
\fint_{B_1} |\nabla u|^q\, dw \leq C\Bigg(\|\nabla u\|_{L^p(B_2)}^q  +
 \fint_{B_2} |\F|^{\frac{q}{p-1}}\, dw\Bigg).
\end{equation*}
Here  $C,\, \delta$ are constants 
 depending only on  $q$, $p$, $n$, $\omega$, $\Lambda$, $M_0$, and $[w]_{A_{\frac{q}{p}}}$.
\end{corollary}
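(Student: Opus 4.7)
The plan is to derive the corollary as a direct consequence of Theorem~\ref{thm:main} together with Muckenhoupt's weighted $L^r$ bound for the Hardy--Littlewood maximal operator. Since $q>p$, the exponent $s:=q/p$ satisfies $1<s<\infty$, so the hypothesis $w\in A_{q/p}$ places $w$ in the class of $A_s$ weights required by Theorem~\ref{thm:main}. Applying that theorem with this choice of $s$ produces, under the smallness condition \eqref{ABCD}, a constant $\delta=\delta(p,q,n,\omega,\Lambda,M_0,[w]_{A_{q/p}})>0$ and the inequality
\[
\fint_{B_1}|\nabla u|^q\,dw \;\leq\; C\Bigg(\|\nabla u\|_{L^p(B_2)}^q + \fint_{B_{3/2}} \M_{B_2}(|\F|^{p/(p-1)})^{q/p}\,dw\Bigg).
\]

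The remaining task is to control the maximal-function term by $\fint_{B_2}|\F|^{q/(p-1)}\,dw$. For this I would pointwise dominate $\M_{B_2}(|\F|^{p/(p-1)})$ by the global centered Hardy--Littlewood maximal operator applied to $|\F|^{p/(p-1)}\chi_{B_2}$; this is routine for the localized operator, and reduces the problem to the classical setting. Because $q/p>1$ and $w\in A_{q/p}$, Muckenhoupt's strong-type inequality (cited in the excerpt just before the corollary) then yields
\[
\int_{\R^n} \M\bigl(|\F|^{p/(p-1)}\chi_{B_2}\bigr)^{q/p}\,dw \;\leq\; C \int_{B_2}|\F|^{q/(p-1)}\,dw,
\]
with $C$ depending only on $n$, $q/p$, and $[w]_{A_{q/p}}$. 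Restricting the left-hand side to $B_{3/2}$, dividing through by $w(B_{3/2})$, and using the doubling estimate $w(B_2)\leq C\,w(B_{3/2})$ supplied by Lemma~\ref{weight:basic-pro} (valid since $w\in A_{q/p}\subset A_\infty$), I obtain
\[
\fint_{B_{3/2}} \M_{B_2}(|\F|^{p/(p-1)})^{q/p}\,dw \;\leq\; C \fint_{B_2}|\F|^{q/(p-1)}\,dw.
\]

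Substituting this back into the conclusion of Theorem~\ref{thm:main} produces the claimed inequality with constants $C$ and $\delta$ of exactly the stated form. Since every step is either a direct invocation of a previously established result or a routine doubling argument, I do not anticipate any real obstacle; the only minor technicality is the pointwise domination of $\M_{B_2}$ by the full-space maximal operator on zero-extended data, which is standard for functions supported in $B_2$ and accounts for the slight inflation from $B_{3/2}$ to $B_2$ via the doubling property of $w$.
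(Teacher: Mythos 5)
Your proposal is correct and follows essentially the same route the paper takes: the text immediately preceding the corollary states it is obtained "by combining Theorem~\ref{thm:main} with Muckenhoupt's strong type weighted estimate for the maximal function," which is exactly your argument (apply Theorem~\ref{thm:main} with $s=q/p$, use $\M_{B_2}(g)=\M(g\chi_{B_2})$ by Definition~\ref{def:maximal}, invoke Muckenhoupt since $w\in A_{q/p}$ with $q/p>1$, and close with doubling). The only cosmetic point is that you describe a "pointwise domination" of $\M_{B_2}$ by $\M$ on zero-extended data, whereas the paper's definition makes these literally the same object, so no domination step is needed.
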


\section{Weighted Morrey space estimates for the maximal function}\label{sub:two-weight-est}
This section is devoted to deriving a weighted Morrey space estimate for the maximal function. 
Let us first recall standard maximal operators used in this paper.
\begin{definition}\label{def:maximal} Let $f\in L^1_{loc}(\R^n)$. The centered Hardy--Littlewood maximal function of 
$f$ is defined by
\[
 \M (f)(x) = \sup_{\rho>0}\fint_{B_\rho(x)}{|f(y)|\, dy}.
\] 
The uncentered Hardy-Littlewood maximal operator $\tilde\M$ is defined over balls   by
\[
\tilde\M( f)(x) =\sup_{B\ni x}\fint_{B} |f(y)|\, dy.
\]
If  $U\subset \R^n$ is an open set and $f\in L^1(U)$, then we   denote
$ \M_U (f) = \M (f \chi_U )$ and $ \tilde\M_U (f) = \tilde\M ( f \chi_U )$. 
 \end{definition}
 
 The following characterization about the   two-weight norm   inequality
\begin{equation}\label{2-weight-ineq}
  \|\tilde\M(f)\|_{L^p_w(\R^n)}\leq C \|f\|_{L^p_v(\R^n)}
\end{equation}
is due to Sawyer \cite{S} and Kairema \cite{Ka}:
\begin{theorem}[two-weight norm inequality]\label{2-weight-est}
 Let $w,\, v$ be two weights and  $1<q<\infty$. Then we have
 \[
  \|\tilde\M(f)\|_{L^q_w(\R^n)}\leq C_n q' [w, v^{1-q'}]_{S_q} \|f\|_{L^q_v(\R^n)}\quad \forall f\in L^q_v(\R^n),
 \]
 where 
 \[
  [w, \sigma]_{S_q} := \sup_{B}\Bigg(\frac{1}{\sigma(B)}\int_{B}[ \tilde\M(\sigma \chi_B)]^q w\, dx \Bigg)^{\frac1q}. 
 \]
Moreover, if there exists $C>0$ such that \eqref{2-weight-ineq} holds for all  $f\in L^q_v(\R^n)$
then  $[w, v^{1-q'}]_{S_q}<\infty$.
\end{theorem}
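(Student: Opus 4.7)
This direction is immediate: fix a ball $B_0\subset\R^n$ and test the hypothesis against $f=v^{1-q'}\chi_{B_0}$. Since $|f|^q v = v^{q(1-q')+1}\chi_{B_0}=v^{1-q'}\chi_{B_0}$, the right-hand side of the assumed inequality equals $C\,(v^{1-q'})(B_0)^{1/q}$, while the left-hand side dominates $\bigl(\int_{B_0}[\tilde\M(v^{1-q'}\chi_{B_0})]^q w\,dx\bigr)^{1/q}$. Taking the supremum over $B_0$ gives $[w,v^{1-q'}]_{S_q}\le C$, as desired.

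\textbf{Sufficiency: reduction and principal cubes.} The plan is to reduce to a dyadic model and then run Sawyer's principal-cubes argument. The three-shifted-lattices trick bounds $\tilde\M f$ pointwise by a sum of dyadic maximal functions $M^{d,j}f$ $(j=1,2,3)$ associated to translates of the standard dyadic lattice, and the testing constant for each dyadic analogue is comparable to $[w,\sigma]_{S_q}$ where $\sigma:=v^{1-q'}$. Substituting $f\mapsto f\sigma$ and using $v=\sigma^{1-q}$ reduces the problem to showing
\[
\int_{\R^n}[M^d(f\sigma)]^q w\,dx \le C(q')^q\,[w,\sigma]_{S_q}^q\,\int_{\R^n}|f|^q\,d\sigma.
\]
For each $k\in\Z$ let $\mathcal P_k$ be the collection of maximal dyadic cubes $Q$ with $\tfrac{1}{\sigma(Q)}\int_Q f\,d\sigma>2^k$, set $\mathcal P:=\bigcup_k\mathcal P_k$ and, for $Q\in\mathcal P$, $E_Q:=Q\setminus\bigcup\{Q'\in\mathcal P:Q'\subsetneq Q\}$. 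Maximality and the doubling behaviour of dyadic $\sigma$-averages force $\{E_Q\}_{Q\in\mathcal P}$ to be pairwise disjoint with $\sigma(E_Q)\ge\tfrac12\sigma(Q)$, so $\mathcal P$ is $\sigma$-sparse.

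\textbf{Closing the estimate.} Every dyadic cube has a unique minimal principal ancestor in $\mathcal P$, and grouping dyadic averages by their ancestor yields the pointwise bound
\[
M^d(f\sigma)(x)\le C\sum_{Q\in\mathcal P}\Big(\tfrac{1}{\sigma(Q)}\int_Q f\,d\sigma\Big)\tilde\M(\sigma\chi_Q)(x)\,\chi_{E_Q}(x).
\]
Raising to the $q$-th power, integrating against $w$ and exploiting disjointness of $\{E_Q\}$ gives
\[
\int_{\R^n}[M^d(f\sigma)]^q w\,dx\le C\sum_{Q\in\mathcal P}\Big(\tfrac{1}{\sigma(Q)}\int_Q f\,d\sigma\Big)^q\int_Q[\tilde\M(\sigma\chi_Q)]^q w\,dx.
\]
The testing condition controls each inner integral by $[w,\sigma]_{S_q}^q\,\sigma(Q)$, while sparsity together with Doob's martingale inequality produces
\[
\sum_{Q\in\mathcal P}\sigma(Q)\Big(\tfrac{1}{\sigma(Q)}\int_Q f\,d\sigma\Big)^q\le 2\int_{\R^n}[M^d_\sigma f]^q\,d\sigma\le 2(q')^q\int_{\R^n}|f|^q\,d\sigma,
\]
where $M^d_\sigma$ is the dyadic $\sigma$-maximal operator, whose $L^q(\sigma)\to L^q(\sigma)$ norm is at most $q'$. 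Combining these displays delivers the claimed bound.

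\textbf{Main obstacle.} The substantive step is the principal-cubes extraction and the ensuing pointwise domination of $M^d(f\sigma)$; once these are in hand, the testing condition plus Doob's inequality close the argument with the sharp $q'$ dependence. The most delicate bookkeeping is carrying the sparsity and maximality through both the substitution $v\leftrightarrow\sigma$ and the three-lattice reduction without inflating the constant beyond $C_n q'[w,\sigma]_{S_q}$.
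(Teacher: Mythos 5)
The paper does not prove this theorem; it is cited to Sawyer~\cite{S} and Kairema~\cite{Ka}, so there is no internal argument to compare your sketch against. Judged on its own merits, your necessity argument is correct and complete. Your sufficiency outline has the right global structure (dyadic reduction, the substitution $f\mapsto f\sigma$ with $\sigma=v^{1-q'}$, the testing constant, and the $L^q(\sigma)$-boundedness of $M^d_\sigma$ with norm $q'$), but the central displayed estimate
\[
M^d(f\sigma)(x)\le C\sum_{Q\in\mathcal P}\Big(\tfrac{1}{\sigma(Q)}\int_Q f\,d\sigma\Big)\tilde\M(\sigma\chi_Q)(x)\,\chi_{E_Q}(x)
\]
is false. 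For $x\in E_Q$ this is a single-term bound $M^d(f\sigma)(x)\le C\langle f\rangle_{Q}^\sigma\,\tilde\M(\sigma\chi_Q)(x)$, but $\tilde\M(\sigma\chi_Q)(x)$ only sees the $\sigma$-mass inside $Q$, while a dyadic cube $R\supsetneq Q$ containing $x$ contributes $\frac{1}{|R|}\int_R f\sigma=\langle f\rangle_R^\sigma\,\sigma(R)/|R|$, and $\sigma(R)/|R|$ may be arbitrarily larger than $\tilde\M(\sigma\chi_Q)(x)$ when $\sigma$ is concentrated in $R\setminus Q$. A concrete failure on $\R$: take $f=\chi_{[0,1]}+\chi_{[2^{m-1},2^{m-1}+1]}$, $\sigma\equiv 1$ except $\sigma\equiv 2^{2m}$ on $[2^{m-1},2^m]$, and $x=\tfrac12$. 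Then $Q(x)=[0,1]$ and the right-hand side is $O(1)$, whereas $\tfrac{1}{2^m}\int_0^{2^m}f\sigma\approx 2^m$.

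The actual Sawyer argument does not produce a pointwise majorant of this kind. It decomposes $\int[M^d(f\sigma)]^q w\,dx$ over the level sets $\Omega_k=\{M^d(f\sigma)>2^k\}=\bigcup_j Q_j^k$ (maximal dyadic), uses $\sigma(Q_j^k)/|Q_j^k|\le M(\sigma\chi_{Q_j^k})$ on $Q_j^k$ to bring in the testing constant, and only then extracts a principal-cube subfamily to control the resulting Carleson-type sum $\sum_{j,k}\big(\langle f\rangle_{Q_j^k}^\sigma\big)^q\sigma(Q_j^k)$ by $\|M^d_\sigma f\|_{L^q(\sigma)}^q$. A secondary point: with your level-set construction of $\mathcal P_k$ (maximal dyadic $Q$ with $\langle f\rangle_Q^\sigma>2^k$), the claimed $\sigma(E_Q)\ge\tfrac12\sigma(Q)$ is not automatic for non-doubling $\sigma$, because $\langle f\rangle_Q^\sigma$ need not be $\le 2^{k+1}$ for the highest level $k$ containing $Q$; the top-down recursion, in which the principal children of $Q$ are the maximal $Q'\subsetneq Q$ with $\langle f\rangle_{Q'}^\sigma>2\langle f\rangle_Q^\sigma$, gives $\sum_{Q'}\sigma(Q')\le\tfrac12\sigma(Q)$ directly. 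In short, your overall route is the right one, but these two displays need to be replaced rather than merely tightened.
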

\begin{remark}\label{rmk:necessary} From the definition of $\tilde\M$, it is obvious that
 \[
  [w, v^{1-q'}]_{A_q}:= \sup_{B}{ \Big(\fint_{B} w(x)\, dx\Big) \Big(\fint_{B} v(x)^{1-q'}\, dx\Big)^{q-1}  } \leq [w, v^{1-q'}]_{S_q}^q. 
 \]
\end{remark}

Our main result in this subsection is:
\begin{theorem}\label{thm:maximal-est} Let $w,\, v$ be two weights and  $1<q<\infty$.
Assume that $\varphi$ and $\phi$ are two positive functions on the set of nonempty open balls in  $\R^n$.
Then for any bounded open set $U\subset \R^n$, we have
\begin{equation}\label{MI}
 \|\tilde\M_U(f)\|_{\calM^{q,\varphi}_w(U)}\leq C(n,q)  [w, v^{1-q'}]_{S_q} [w\varphi, v\phi]_q \|f\|_{\calM^{q,\phi}_v(U)}
 \end{equation}
 for any  function $f\in L^1(U)$. Here,
 \begin{equation*}\label{wmu1-vmu2}
 [w\varphi, v\phi]_q :=\sup_{y\in U,\, 0<r\leq \diam(U)}  \left[\frac{v(B_{2r}(y))}{w(B_r(y))} \frac{\phi(B_{2r}(y))^{-1}}{\varphi(B_r(y))^{-1}}
    +
     \sup_{2r\leq  s\leq 2\diam(U)}\frac{v(B_s(y))}{w(B_s(y))} \frac{\phi(B_s(y))^{-1}}{\varphi(B_{r}(y))^{-1}} \right]^{\frac1q} .
 \end{equation*}
\end{theorem}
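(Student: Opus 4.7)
Fix $\bar x \in U$ and $0 < r \leq \diam(U)$; the target is to control
\[
I(\bar x, r) := \frac{\varphi(B_r(\bar x))}{w(B_r(\bar x))} \int_{B_r(\bar x)\cap U} \tilde\M_U(f)^q\, w\, dx.
\]
The plan is the standard local--far splitting: write $f\chi_U = f_1 + f_2$ with $f_1 := f\chi_{B_{2r}(\bar x)\cap U}$ and $f_2 := f\chi_{U\setminus B_{2r}(\bar x)}$, so that $\tilde\M_U(f) \leq \tilde\M(f_1) + \tilde\M(f_2)$. Each of the two pieces will contribute exactly one summand of the bracket defining $[w\varphi, v\phi]_q^q$.

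For the local piece $f_1$, I would apply Theorem~\ref{2-weight-est} (Sawyer--Kairema) globally:
\[
\int_{B_r(\bar x)\cap U} \tilde\M(f_1)^q w\, dx \leq \int_{\R^n} \tilde\M(f_1)^q w\, dx \leq C(n,q)[w, v^{1-q'}]_{S_q}^q \int_{B_{2r}(\bar x)\cap U} |f|^q v\, dx,
\]
and then bound the last integral by $\|f\|_{\calM^{q,\phi}_v(U)}^q\, v(B_{2r}(\bar x))/\phi(B_{2r}(\bar x))$ via the Morrey norm (with the natural interpretation when $2r > \diam(U)$, since then $B_{2r}(\bar x)\cap U = U$). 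After multiplication by $\varphi(B_r(\bar x))/w(B_r(\bar x))$, the result is exactly the first summand of the bracket defining $[w\varphi, v\phi]_q^q$ evaluated at $(\bar x, r)$.

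For the far piece, the key geometric observation is: if $B = B_\rho(y)$ contains a point $x \in B_r(\bar x)$ and meets $U\setminus B_{2r}(\bar x)$, then necessarily $\rho > r/2$. Setting $s := \min\{2\rho + r,\, 2\diam(U)\}$, one has $B\cap U \subset B_s(\bar x)\cap U$, $2r \leq s \leq 2\diam(U)$, and $|B_s(\bar x)|/|B_\rho(y)| \leq 4^n$ (the regimes $2\rho + r \leq 2\diam(U)$ and $\rho > \diam(U)/2$ being checked separately). Hence
\[
\tilde\M(f_2)(x) \leq 4^n \sup_{2r\leq s\leq 2\diam(U)} \frac{1}{|B_s(\bar x)|}\int_{B_s(\bar x)\cap U} |f|\, dy.
\]
Applying H\"older's inequality with the weight $v$, bounding the resulting $L^q(v)$-factor by the Morrey norm of $f$ and the $L^{q'}(v^{1-q'})$-factor by
$\bigl(\fint_{B_s} v^{1-q'}\bigr)^{1/q'}\leq [w, v^{1-q'}]_{A_q}^{1/q}(|B_s|/w(B_s))^{1/q} \leq [w, v^{1-q'}]_{S_q}(|B_s|/w(B_s))^{1/q}$ (Remark~\ref{rmk:necessary}), one arrives at
\[
\tilde\M(f_2)(x) \leq C(n)[w, v^{1-q'}]_{S_q}\,\|f\|_{\calM^{q,\phi}_v(U)}\sup_{2r\leq s\leq 2\diam(U)}\left(\frac{v(B_s(\bar x))}{w(B_s(\bar x))\phi(B_s(\bar x))}\right)^{1/q},
\]
whose right-hand side is independent of $x\in B_r(\bar x)$. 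Integrating the $q$-th power against $w$ on $B_r(\bar x)\cap U$ and multiplying by $\varphi(B_r(\bar x))/w(B_r(\bar x))$ reproduces exactly the second summand of the bracket defining $[w\varphi, v\phi]_q^q$.

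Summing the two contributions, extracting the $q$-th root, and taking the supremum over $\bar x \in U$ and $0<r\leq \diam(U)$ yields \eqref{MI}. The main delicate point I anticipate is the geometric reduction for $\tilde\M(f_2)$: one must cover every competing ball $B_\rho(y)$ by a centered ball $B_s(\bar x)$ with $s$ restricted to $[2r,\, 2\diam(U)]$ (to match the admissible range in $[w\varphi, v\phi]_q$) while keeping the volume ratio $|B_s(\bar x)|/|B_\rho(y)|$ bounded by a dimensional constant; this is what forces the two-case split above. The remaining ingredients (Sawyer--Kairema, H\"older, and Remark~\ref{rmk:necessary}) enter by direct application, and no doubling or $A_\infty$ assumption on $w$ or $v$ is needed.
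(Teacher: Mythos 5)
Your proposal follows the same strategy as the paper: the local--far splitting $f\chi_U = f_1 + f_2$ at scale $2r$, the Sawyer--Kairema two-weight bound for the local piece, the covering of competing balls by centered balls $B_s(\bar x)$ for the far piece, and the H\"older estimate combined with Remark~\ref{rmk:necessary} to extract $[w,v^{1-q'}]_{S_q}$. The geometric reduction you give (with $s = \min\{2\rho+r,\,2\diam(U)\}$ and the volume ratio bound $4^n$) is a correct, slightly different parametrization of the paper's observation that $B_t(z)\subset B_{2t+r}(\bar x)$, and it produces the same pointwise bound on $\tilde\M(f_2)$.

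There is, however, one small gap. In the far-piece estimate you bound $\bigl(\int_{B_s(\bar x)\cap U}|f|^q\,dv\bigr)^{1/q}$ by $\bigl(v(B_s(\bar x))/\phi(B_s(\bar x))\bigr)^{1/q}\|f\|_{\calM^{q,\phi}_v(U)}$ uniformly for $2r\leq s\leq 2\diam(U)$. That inequality is exactly the Morrey-norm definition only for $s\leq\diam(U)$; for $s\in(\diam(U),2\diam(U)]$ one has $B_s(\bar x)\supset U$, and the definition only gives the bound with $v(B_{\diam(U)}(\bar x))/\phi(B_{\diam(U)}(\bar x))$ on the right, which is not comparable to $v(B_s)/\phi(B_s)$ without further hypotheses on $v$ and $\phi$. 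The fix is the one the paper makes explicit: for $r\geq\diam(U)/2$ the far piece $f_2$ vanishes identically, and for $r<\diam(U)/2$ the supremum $\sup_{s\geq 2r}|B_s(\bar x)|^{-1}\int_{B_s(\bar x)\cap U}|f|$ may be restricted to $2r<s\leq\diam(U)$, because $\int_{B_s(\bar x)\cap U}|f|$ is constant and $|B_s(\bar x)|$ is increasing once $s\geq\diam(U)$. With that one-line reduction your argument is complete; it is also why the paper takes $s\leq\diam(U)$ in its H\"older step even though the bracket $[w\varphi,v\phi]_q$ is defined with $s$ ranging up to $2\diam(U)$ (enlarging the bracket only weakens the inequality to be proved).
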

\begin{proof}
 Without loss of generality, we can assume that  $f\in L^1(U)$ and $f\geq 0$.

For each  $y\in U$ and $0<r\leq \diam (U)$, let us write 
 $f = f\chi_{B_{2r}(y)} +  f\chi_{\R^n\setminus B_{2r}(y)}=: f_1 + f_2$. Then $\tilde\M_U(f) \leq\tilde\M_U(f_1) + \tilde\M_U(f_2)$, and hence
 \begin{align}\label{eq:inter-max-est}
  \|\tilde\M_U(f)\|_{L_w^q(B_r(y)\cap U)}
  &\leq \|\tilde\M( f_1 \chi_U)\|_{L_w^q(\R^n)} +\|\tilde\M( f_2 \chi_U)\|_{L_w^q(B_r(y)\cap U)}\nonumber\\
  &\leq C_n q' [w, v^{1-q'}]_{S_q}  \| f_1 \chi_U\|_{L_v^q(\R^n)} +\|\tilde\M( f_2 \chi_U)\|_{L_w^q(B_r(y)\cap U)},
 \end{align}
 where we have used Theorem~\ref{2-weight-est} to obtain the last inequality.
Now for any $x\in B_r(y)\cap U$ and any ball $B_t(z)\ni x$, we clearly have $B_t(z)\subset B_{2t +r}(y)$. Therefore, by considering separately the case $t\leq r/2$ and the case $t>r/2$ we see that 
 \[
  \tilde\M(\chi_U f_2)(x) \leq 4^n \sup_{s>2r}\frac{1}{|B_s(y)|}\int_{B_s(y) \setminus B_{2r}(y)} f \chi_U\, dz=: 4^n I_r(y) \quad \forall x\in B_r(y)\cap U.
 \]
This together with \eqref{eq:inter-max-est} yields
\begin{equation}\label{eq:step1}
  \|\tilde\M_U(f)\|_{L_w^q(B_r(y)\cap U)}
  \leq C_n q' [w, v^{1-q'}]_{S_q} \|f\|_{L_v^q(B_{2 r}(y)\cap U)} +4^n  w(B_r(y))^{\frac1q} I_r(y)
 \end{equation}
 for every $y\in U$ and $0<r\leq\diam (U)$.
  Using definition \eqref{weighted-morrey} and \eqref{eq:step1}, we get
 \begin{align*}
  \|\tilde\M_U(f)\|_{\calM^{q,\varphi}_w(U)}
  &= \sup_{y\in U,\, r\leq \diam(U)}  \Bigg(\frac{\varphi(B_r(y))}{w(B_r(y))}\int_{B_r(y)\cap U} |\tilde\M_U(f)|^q \, dw\Bigg)^{\frac1q}\\
  &\leq C_n q' [w, v^{1-q'}]_{S_q} \sup_{y\in U,\, r\leq \diam(U)} \Bigg( \frac{v(B_{2r}(y))}{w(B_r(y))} 
  \frac{\varphi(B_r(y))}{\phi(B_{2r}(y))} \Bigg)^{\frac1q}
  \Bigg(\frac{\phi(B_{2r}(y))}{v(B_{2r}(y))}\int_{B_{2 r}(y)\cap U} |f|^q \, dv\Bigg)^{\frac1q}\\ 
  &\quad + 4^n  \sup_{y\in U,\, r\leq \diam(U)}  \varphi(B_r(y))^{\frac1q} I_r(y).
 \end{align*}
 When $r\geq \diam(U)/2$, we have $U\subset B_{2 r}(y)$ and hence $I_r(y)=0$. Therefore, we deduce from the above inequality that
  \begin{align}\label{ABC}
  \|\tilde\M_U(f)\|_{\calM^{q,\varphi}_w(U)}
  &\leq C_n q' [w, v^{1-q'}]_{S_q} \|f\|_{\calM^{q,\phi}_v(U)} 
\sup_{y\in U,\, r\leq \diam(U)} 
  \Bigg(\frac{v(B_{2r}(y))}{w(B_r(y))} \frac{\varphi(B_r(y))}{\phi(B_{2r}(y))} \Bigg)^{\frac1q}\nonumber \\
  &\quad + 4^n \sup_{y\in U,\, r<\frac12 \diam(U)} \varphi(B_r(y))^{\frac1q} I_r(y).
 \end{align}
  We use H\"older inequality to estimate $I_r(y)$ for $r<\frac12 \diam(U)$ as follows
 \begin{align*}
  I_r(y)
  &\leq  \sup_{2r<s} \fint_{B_s(y)}  f \chi_U\, dz =\sup_{2r<s\leq \diam(U)} \fint_{B_s(y)}  f \chi_U\, dz\\
  &\leq \sup_{2r<s\leq \diam(U)}\frac{1}{|B_s(y)|} \Big(\int_{B_s(y)} v^{\frac{-1}{q-1}}\, dz\Big)^{\frac{q-1}{q}} \Big( \int_{B_s(y)} |f \chi_U|^q \, dv(z)\Big)^{\frac1q}\nonumber\\
  &=\sup_{2r<s\leq \diam(U)}\left[ \Big(\fint_{B_s(y)} v\, dz\Big) \Big(\fint_{B_s(y)} v^{\frac{-1}{q-1}}\, dz\Big)^{q-1}\right]^{\frac1q} 
  \Big( \fint_{B_s(y)}  | f \chi_U|^q \, dv(z)\Big)^{\frac1q}.
 \end{align*}
 It then follows from Remark~\ref{rmk:necessary} that
 \begin{align*}
  I_r(y)
  \leq [w, v^{1-q'}]_{S_q} \sup_{2r<s\leq \diam(U)} \Big(\frac{v(B_s(y))}{w(B_s(y))} \Big)^{\frac1q}\Big( \fint_{B_s(y)} |f \chi_U|^q \, dv(z)\Big)^{\frac1q}.
 \end{align*}
 Plugging this  into \eqref{ABC}, we conclude that
\begin{align*}
  &\|\tilde\M_U(f)\|_{\calM^{q,\varphi}_w(U)}
    \leq C_n q' [w, v^{1-q'}]_{S_q} \|f\|_{\calM^{q,\phi}_v(U)} \sup_{y\in U,\, r\leq \diam(U)}  \Bigg( 
  \frac{v(B_{2r}(y))}{w(B_r(y))} \frac{\varphi(B_r(y))}{\phi(B_{2r}(y))} \Bigg)^{\frac1q} \\
    &\quad + 4^n [w, v^{1-q'}]_{S_q}
   \sup_{y\in U,\, r< \frac12\diam(U)}  \sup_{2r<s\leq \diam(U)} \Bigg( \frac{v(B_s(y))}{w(B_s(y))} \frac{\varphi(B_r(y))}{\phi(B_s(y))} \Bigg)^{\frac1q}\Bigg(
    \frac{\phi(B_s(y))}{v(B_s(y))}\int_{B_s(y)\cap U} |f(z)|^q \, dv(z)\Bigg)^{\frac1q}\\
    &\quad \leq C(n,q)[w, v^{1-q'}]_{S_q} [w\varphi, v\phi]_q \|f\|_{\calM^{q,\phi}_v(U)}.
 \end{align*}
 This gives estimate \eqref{MI} as desired.
\end{proof}
\begin{remark}
By inspecting the proof we see that inequality \eqref{MI} also holds true if $U$ is replaced by $\R^n$. 
\end{remark}
Given Theorem~\ref{thm:maximal-est}, it is desirable to have concrete conditions on $w$, $v$, $\varphi$, and $\phi$ ensuring that
$[w, v^{1-q'}]_{S_q}<\infty$ and $ [w\varphi, v\phi]_q<\infty$. The finiteness of the constant $[w, v^{1-q'}]_{S_q}$
has been investigated extensively (see \cite{F,Ne,P,PR,W}) and for this to be true it is necessary that
$[w, v^{1-q'}]_{A_q}<\infty$ (see Remark~\ref{rmk:necessary}). In particular, we have from \cite[Corollary~1.3]{P} and \cite[Corollary~1.4]{PR} that
$[w, v^{1-q'}]_{S_q}<\infty$ if any of the 
following two conditions is satisfied:
\begin{enumerate}
 \item[({\bf A})] There exists $r>1$ such that
 \[
   \sup_{B}{ \Big(\fint_{B} w\, dx\Big) \Big(\fint_{B} v^{r(1-q')}\, dx\Big)^{\frac{q-1}{r}}  }<\infty.
 \]
  \item[({\bf B})] $[w, v^{1-q'}]_{A_q}<\infty$
and $v^{1-q'}\in A_\infty$. 
\end{enumerate}
We refer readers to \cite{P,PR} for more general conditions guaranteeing the finiteness of $[w, v^{1-q'}]_{S_q}$. 
On the other hand, it is clear that $[w\varphi, v\phi]_q<\infty$
if $w$ is doubling and there exists a constant $C_*>0$ such that
\begin{equation}\label{wmu1-vmu2-sufficient}
\sup_{2r \leq s\leq 2\diam(U)}  \frac{v(B_s(y))}{w(B_s(y))} \frac{1}{\phi(B_s(y))} \leq C_* \, \frac{1}{\varphi(B_r(y))} \quad \mbox{for all $y\in U$
and $0<r\leq \diam(U)$}. 
 \end{equation}
For convenience, we summarize the above discussions into two separate results:
\begin{corollary}[two-weight case] \label{cor:two-weight-case} Let $U\subset \R^n$ be a bounded open set. Assume that the hypotheses of Theorem~\ref{thm:maximal-est} and 
condition \eqref{wmu1-vmu2-sufficient}  hold, and $w$ is doubling. 
Assume in addition that either ({\bf A}) or ({\bf B}) is satisfied.
Then $[w, v^{1-q'}]_{S_q}<\infty$ and there exists a constant $C>0$ depending only on $n$, $q$, $C_*$, the doubling constant for $w$, and 
$[w, v^{1-q'}]_{S_q}$  such that
\begin{equation*}
 \|\tilde\M_U(f)\|_{\calM^{q,\varphi}_w(U)}\leq C \|f\|_{\calM^{q,\phi}_v(U)}
 \qquad \forall f\in L^1(U).
 \end{equation*}
\end{corollary}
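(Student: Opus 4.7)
The plan is to verify the two quantitative hypotheses needed to invoke Theorem~\ref{thm:maximal-est}, namely the finiteness of $[w, v^{1-q'}]_{S_q}$ and of $[w\varphi, v\phi]_q$, and then read off the conclusion directly from inequality \eqref{MI}.

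For the first quantity, the finiteness under either hypothesis $({\bf A})$ or $({\bf B})$ is exactly the content of \cite[Corollary~1.3]{P} and \cite[Corollary~1.4]{PR}; this step is purely a citation and contributes only to the constant in the final bound.

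For the second quantity, I would estimate the two summands in the definition of $[w\varphi, v\phi]_q$ separately. The outer summand, $\sup_{2r\leq s\leq 2\diam(U)}\frac{v(B_s(y))}{w(B_s(y))}\frac{\varphi(B_r(y))}{\phi(B_s(y))}$, is controlled directly by the assumption \eqref{wmu1-vmu2-sufficient}, yielding a bound of $C_*$. For the first summand, $\frac{v(B_{2r}(y))}{w(B_r(y))}\frac{\varphi(B_r(y))}{\phi(B_{2r}(y))}$, the mild subtlety is that \eqref{wmu1-vmu2-sufficient} only controls the ratio $v/w$ evaluated on a single ball of radius $s$, whereas here the numerator uses $v(B_{2r}(y))$ while the denominator uses $w(B_r(y))$. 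I would bridge this mismatch by invoking the doubling property of $w$ to write $w(B_r(y))^{-1}\leq D\, w(B_{2r}(y))^{-1}$ for the doubling constant $D$, and then apply \eqref{wmu1-vmu2-sufficient} at the single scale $s=2r$, producing the bound $D C_*$.

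Combining these two ingredients with Theorem~\ref{thm:maximal-est} yields
\[
 \|\tilde\M_U(f)\|_{\calM^{q,\varphi}_w(U)}\leq C(n,q)\,[w, v^{1-q'}]_{S_q}\,\bigl(D C_* + C_*\bigr)^{\frac{1}{q}}\, \|f\|_{\calM^{q,\phi}_v(U)},
\]
with $C$ depending only on the advertised parameters. There is no genuine obstacle in this argument: it is essentially an assembly step, and the only mildly delicate point is the use of the doubling hypothesis on $w$ to absorb the radius mismatch in the first summand of $[w\varphi, v\phi]_q$.
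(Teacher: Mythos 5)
Your proof is correct and follows exactly the route the paper intends: the paper states it as "clear" that \eqref{wmu1-vmu2-sufficient} together with doubling of $w$ gives $[w\varphi,v\phi]_q<\infty$, while the finiteness of $[w,v^{1-q'}]_{S_q}$ under $({\bf A})$ or $({\bf B})$ is taken from the cited results of P\'erez and P\'erez--Rela, after which Theorem~\ref{thm:maximal-est} is applied directly. Your explicit treatment of the radius mismatch in the first summand of $[w\varphi,v\phi]_q$ (bounding $w(B_r(y))^{-1}\leq D\,w(B_{2r}(y))^{-1}$ and then applying \eqref{wmu1-vmu2-sufficient} at $s=2r$) is precisely the step the paper leaves implicit.
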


\begin{corollary}[one-weight case] \label{cor:one-weight-case}  Let $1<q<\infty$, $w\in A_q$, and $U\subset \R^n$ be a bounded open set.
Assume that $\varphi$ and $\phi$ are two positive functions on the set of  open balls in  $\R^n$
such that there exists  $C_*>0$ satisfying
\begin{equation}\label{mu1-mu2-sufficient}
\sup_{2r \leq s\leq 2\diam(U)}  
\phi(B_s(y))^{-1} \leq C_* \, \varphi(B_r(y))^{-1} \quad \mbox{for all $y\in U$
and $0<r\leq  \diam(U)$}. 
 \end{equation}
Then there exists  a constant $C>0$ depending only on $n$, $q$, $C_*$, and $[w]_{A_q}$ such that
\begin{equation*}
 \|\tilde\M_U(f)\|_{\calM^{q,\varphi}_w(U)}\leq C \|f\|_{\calM^{q,\phi}_w(U)}\quad\mbox{for any } f\in L^1(U).
 \end{equation*}
\end{corollary}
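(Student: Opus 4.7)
The plan is to deduce Corollary~\ref{cor:one-weight-case} as a direct specialization of Theorem~\ref{thm:maximal-est} with $v=w$. In this one-weight setting the norm inequality \eqref{MI} reads
\[
\|\tilde\M_U(f)\|_{\calM^{q,\varphi}_w(U)}\leq C(n,q)\, [w, w^{1-q'}]_{S_q}\, [w\varphi, w\phi]_q\, \|f\|_{\calM^{q,\phi}_w(U)},
\]
so the only work is to show that both constants on the right hand side are finite and depend only on $n$, $q$, $C_*$, and $[w]_{A_q}$.

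First I would verify that $[w, w^{1-q'}]_{S_q}<\infty$. Since $w\in A_q$, Lemma~\ref{weight:basic-pro}(2) gives $w^{1-q'}\in A_{q'}$ with constant controlled by $[w]_{A_q}$, and Lemma~\ref{weight:basic-pro}(3) then yields $w^{1-q'}\in A_\infty$. Moreover, directly from the $A_q$ definition one has $[w, w^{1-q'}]_{A_q}=[w]_{A_q}$. Thus condition (\textbf{B}) from the discussion preceding the corollary is satisfied, and the testing constant $[w, w^{1-q'}]_{S_q}$ is finite with a bound depending only on $n$, $q$, and $[w]_{A_q}$.

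Next I would bound $[w\varphi, w\phi]_q$. Substituting $v=w$, the bracket in its definition splits into two pieces. The second piece is
\[
\sup_{2r\leq s\leq 2\diam(U)} \frac{w(B_s(y))}{w(B_s(y))}\frac{\varphi(B_r(y))}{\phi(B_s(y))} = \sup_{2r\leq s\leq 2\diam(U)}\frac{\varphi(B_r(y))}{\phi(B_s(y))},
\]
which is at most $C_*$ by hypothesis \eqref{mu1-mu2-sufficient}. The first piece equals
\[
\frac{w(B_{2r}(y))}{w(B_r(y))}\cdot\frac{\varphi(B_r(y))}{\phi(B_{2r}(y))};
\]
the first factor is bounded by the doubling constant $2^{nq}[w]_{A_q}$ of $w$ (Lemma~\ref{weight:basic-pro}(1)), while the second factor is bounded by $C_*$ via \eqref{mu1-mu2-sufficient} taken at $s=2r$. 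Hence $[w\varphi, w\phi]_q$ is finite with the required dependence on parameters.

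Combining the two bounds and plugging them into \eqref{MI} yields the claimed estimate. I do not anticipate a real obstacle here: the proof is essentially a bookkeeping verification of the hypotheses of Theorem~\ref{thm:maximal-est}, with the only mildly delicate step being the recognition that $w\in A_q$ gives both $[w, w^{1-q'}]_{A_q}<\infty$ and $w^{1-q'}\in A_\infty$, which together place us inside condition (\textbf{B}) and thereby produce the finite testing constant $[w, w^{1-q'}]_{S_q}$.
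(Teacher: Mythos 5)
Your proof is correct and follows essentially the same route as the paper: the paper invokes Corollary~\ref{cor:two-weight-case} (which itself wraps Theorem~\ref{thm:maximal-est}) together with the doubling bound from \eqref{strong-doubling}, while you apply Theorem~\ref{thm:maximal-est} directly and bound $[w\varphi,w\phi]_q$ by hand; in both cases the key observations are that $w^{1-q'}\in A_{q'}\subset A_\infty$ and $[w,w^{1-q'}]_{A_q}=[w]_{A_q}$ put you in condition (\textbf{B}), and that $w$'s doubling plus \eqref{mu1-mu2-sufficient} control $[w\varphi,w\phi]_q$.
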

\begin{proof}
 Since $w\in A_q$, we have from Lemma~\ref{weight:basic-pro}  that $w^{1-q'}\in A_{q'}\subset A_\infty$. Moreover, $[w, w^{1-q'}]_{A_q}= [w]_{A_q}$.
 Therefore, the conclusion is a consequence of Corollary~\ref{cor:two-weight-case}  and the first inequality in
 \eqref{strong-doubling}.
\end{proof}
Notice that if  $\phi=\varphi$, then  condition \eqref{mu1-mu2-sufficient}  trivially holds if
$\varphi\in \mathcal B_0$ defined in Definition~\ref{class-B}.
In particular, we recover the estimate by Chiarenza and Frasca in \cite{ChF} by taking 
$\varphi(B) := |B|^{\frac{\lambda}{n}}$ with $0\leq \lambda\leq n$. Furthermore,
Corollary~\ref{cor:one-weight-case} extends the weighted Morrey space estimates obtained recently in 
\cite{GMOS,GORS,KS,Na}.

\section{Approximating gradients of solutions }\label{approximation-gradient}
The purpose of this section is to 
show that gradients of weak solutions to \eqref{ME} can be approximated by 
bounded gradients in an invariant way. Proposition~\ref{lm:localized-compare-gradient} below is an extension of
\cite[Lemma~5.3]{NP} and \cite[Lemma~4.2]{BPS} where the modulus $\omega$ is respectively required to be 
$\omega(s)=s$ (Lipschitz continuity) and $\omega(s)=s^\alpha$ with $\alpha\in (0,1)$ (H\"older 
continuity). Here by a simple modification  of the arguments in \cite{BPS} 
we prove that the approximation holds true for any  modulus of continuity $\omega$.
This   result plays a crucial role in our derivation of 
gradient  estimates   for  solutions of   \eqref{GE-10}. 
\begin{proposition}
\label{lm:localized-compare-gradient}  Let $\A$ satisfy \eqref{structural-reference-1}--\eqref{structural-reference-3} with $p>1$, and let $M_0>0$.
For any $\e\in (0,1]$, there exist small positive constants $\delta$ and $\sigma$  depending only on $\e$,   $p$,   
$n$, $\omega$, $\Lambda$,
 and $M_0$ such that: 
if $ \lambda>0$, $\theta>0$, $r>0$,
\begin{equation*}
\Theta_{B_{3 \sigma r}}(\A) \leq \delta \quad \mbox{and}\quad \fint_{B_{4r}}  |\F|^{p'} \, dx  \leq \delta,
\end{equation*}
and $u\in W_{loc}^{1,p}(B_{4 r})$ is a weak solution of $\div \Big[\frac{\A(x,\lambda\theta u, \lambda \nabla u)}{\lambda^{p-1}}\Big] = \div \F\,$ in $B_{4 r}$ satisfying
\begin{equation*}
 \|u\|_{L^\infty(B_{4r})}\leq \frac{M_0}{\lambda\theta},\quad  \fint_{B_{4 r}}{|\nabla  u|^p \, dx}\leq 1,
 \quad\mbox{and}\quad
 \fint_{B_{4\sigma r}}{|\nabla  u|^p dx}\leq 1,
\end{equation*}
then 
\begin{equation*}
\fint_{B_{3\sigma r}}{|\nabla  u - \nabla  v|^p\, dx}\leq \e^p
\end{equation*}
for some function $v\in  W^{1,p}(B_{3 \sigma r})$  with
\begin{equation*}
\|\nabla  v\|_{L^\infty(B_{2 \sigma  r})}^p
\leq C(p,n, \Lambda) \fint_{B_{3 \sigma r}}{|\nabla  v|^p \, dx}.
\end{equation*}
\end{proposition}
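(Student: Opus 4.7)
The plan is a three-step freezing/comparison argument that adapts the strategy of \cite{NP,BPS} to handle a general modulus $\omega$. Setting $\tilde\A(x,\xi):=\lambda^{1-p}\A(x,\lambda\theta u(x),\lambda\xi)$, the function $u$ weakly solves $\div\tilde\A(x,\nabla u)=\div\F$ on $B_{4r}$, and $\tilde\A$ inherits the standard $p$-Laplacian structure \eqref{structural-reference-1}--\eqref{structural-reference-2} with constants depending only on $p,\Lambda$, \emph{uniformly} in $\lambda,\theta$. I will choose $\sigma$ small first (to handle the $\omega$ term), then $\delta$ small (to handle the data and the $x$-BMO term); both will depend only on $\e,p,n,\omega,\Lambda,M_0$.

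\textbf{Step 1 (discard $\F$).} Let $u_1$ be the weak solution of $\div\tilde\A(x,\nabla u_1)=0$ on $B_{3\sigma r}$ with $u_1-u\in W^{1,p}_0(B_{3\sigma r})$. Testing $\div(\tilde\A(x,\nabla u)-\tilde\A(x,\nabla u_1))=\div\F$ with $u-u_1$ and using the monotonicity implied by \eqref{structural-reference-1}, together with $\fint_{B_{3\sigma r}}|\F|^{p'}\,dx\leq C(\sigma,n)\delta$, shows that $\fint_{B_{3\sigma r}}|\nabla u-\nabla u_1|^p\,dx$ can be made arbitrarily small by choosing $\delta$ small (depending on $\sigma$).

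\textbf{Step 2 (freeze $z$).} By the weak maximum principle, $\|u_1\|_{L^\infty(B_{3\sigma r})}\leq M_0/(\lambda\theta)$, and by the classical De Giorgi--Ladyzhenskaya--Ural'tseva interior Hölder estimate for $p$-Laplacian type homogeneous equations, $u_1\in C^{0,\alpha}_{\rm loc}$ with $\mathrm{osc}_{B_{3\sigma r}}u_1\leq C\sigma^\alpha M_0/(\lambda\theta)$ for some $\alpha=\alpha(p,n,\Lambda)\in(0,1)$. Fix $x_0\in B_{3\sigma r}$ and set $z_0:=\lambda\theta u_1(x_0)\in[-M_0,M_0]$; then $|\lambda\theta u_1(x)-z_0|\leq CM_0\sigma^\alpha$ on $B_{3\sigma r}$. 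Define $\A^*(x,\xi):=\lambda^{1-p}\A(x,z_0,\lambda\xi)$ and let $u_2-u_1\in W^{1,p}_0(B_{3\sigma r})$ solve $\div\A^*(x,\nabla u_2)=0$. Assumption \eqref{structural-reference-3} yields $|\tilde\A(x,\xi)-\A^*(x,\xi)|\leq\Lambda|\xi|^{p-1}\omega(CM_0\sigma^\alpha)$, and the standard $\A$-harmonic comparison gives $\fint_{B_{3\sigma r}}|\nabla u_1-\nabla u_2|^p\,dx\leq C\,\omega(CM_0\sigma^\alpha)^{p'}\bigl(1+\fint_{B_{3\sigma r}}|\nabla u_1|^p\,dx\bigr)$. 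Since $\omega(0^+)=0$, this is made $\leq(\e/3)^p$ by choosing $\sigma$ sufficiently small.

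\textbf{Step 3 (freeze $x$).} The vector field $\A^*$ has no $z$-dependence and its $x$-oscillation is controlled by $\Theta_{B_{3\sigma r}}(\A)\leq\delta$. Let $v-u_2\in W^{1,p}_0(B_{3\sigma r})$ solve $\div\bar{\A^*}_{B_{3\sigma r}}(\nabla v)=0$ on $B_{3\sigma r}$, where $\bar{\A^*}_{B_{3\sigma r}}(\xi)=\lambda^{1-p}\bar\A_{B_{3\sigma r}}(z_0,\lambda\xi)$. The Caffarelli--Peral type small-BMO perturbation estimate (applied to the $x$-only-dependent vector field $\A^*$) yields $\fint_{B_{3\sigma r}}|\nabla u_2-\nabla v|^p\,dx\leq C\,\delta^{\kappa}\bigl(1+\fint_{B_{3\sigma r}}|\nabla u_2|^p\,dx\bigr)$ for some $\kappa>0$, which is $\leq(\e/3)^p$ for $\delta$ small. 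Because $\bar{\A^*}_{B_{3\sigma r}}$ is $x$-independent and obeys the $p$-Laplacian structure with constants depending only on $p,\Lambda$, the DiBenedetto--Tolksdorf--Uhlenbeck interior $C^{1,\alpha}$ theory yields
\[
\|\nabla v\|_{L^\infty(B_{2\sigma r})}^p\leq C(p,n,\Lambda)\fint_{B_{3\sigma r}}|\nabla v|^p\,dx.
\]
Combining Steps 1--3 by the triangle inequality in $L^p(B_{3\sigma r})$ proves the proposition.

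The main obstacle is Step 2: for general $\omega$ one cannot quantify the decay of $\omega(\cdot)$ the way one can for $\omega(s)=s$ or $\omega(s)=s^\alpha$. The key trick is to use the $C^{0,\alpha}$ oscillation bound for $u_1$ first to bring $\lambda\theta u_1$ arbitrarily close to a constant $z_0\in[-M_0,M_0]$ by shrinking $\sigma$, and only \emph{then} invoke $\omega(0^+)=0$ qualitatively. This decouples the $\omega$-dependence from the quantitative estimates and explains why the argument works for any modulus of continuity.
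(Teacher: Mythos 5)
Step 2 of your proposal has a genuine gap. First, because you define $u_1$ to solve the homogeneous equation only on $B_{3\sigma r}$, the interior H\"older estimate does not give $\mathrm{osc}_{B_{3\sigma r}}u_1 \leq C\sigma^\alpha M_0/(\lambda\theta)$: the small factor in the interior oscillation decay comes from the ratio of inner to outer radius, and here both radii are comparable to $\sigma r$, so no smallness is gained from $\sigma\to 0$. This is precisely why the paper solves its $\F$-free comparison problem (Lemma~\ref{lm:step1}, equation \eqref{eq-h}) on the \emph{large} ball $B_4$ and then applies the interior decay of Lemma~\ref{lm:inter-regularity} from $B_4$ down to $B_{4\sigma}$ to produce the factor $\sigma^\beta$.

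Second, and more seriously, even after repairing the first point the pointwise bound $|\tilde\A(x,\xi)-\A^*(x,\xi)|\leq\Lambda|\xi|^{p-1}\omega(CM_0\sigma^\alpha)$ is false. The coefficient $\tilde\A(x,\xi)=\lambda^{1-p}\A(x,\lambda\theta u(x),\lambda\xi)$ in $u_1$'s equation depends on the original solution $u$, not on $u_1$, so you need $|\lambda\theta u(x)-z_0|$ small pointwise. But $u-u_1\in W^{1,p}_0$ is controlled only in $W^{1,p}$ (hence at best in $L^{p^*}$) by $\|\F\|_{L^{p'}}$, so an oscillation bound for $u_1$ does not transfer to a pointwise bound for $u$. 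The paper sidesteps this by keeping the error term $\omega(\lambda\theta|u-\bar h_{B_{4\sigma}}|)$ inside an integral and running a $\gamma$-splitting argument: on $\{|u-\bar h_{B_{4\sigma}}|\leq\gamma\}$ the crude bound $\omega(\gamma)$ is used, while on the complement a Chebyshev bound is combined with higher integrability of $\nabla h$, Sobolev's inequality, and the estimate
\[
\fint_{B_{4\sigma}}|u-\bar h_{B_{4\sigma}}|^p\,dx\lesssim \sigma^{-n}\int_{B_4}|\nabla(u-h)|^p\,dx+\big(\sigma^\beta\|h\|_{L^\infty(B_4)}\big)^p,
\]
followed by choosing $\gamma$ first, then $\sigma$, then $\delta$ (and a separate case distinction when $\sigma^{-n}(M_0/(\lambda\theta))^p$ is already small). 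Your intuition — bring the $z$-argument close to a constant by shrinking $\sigma$ and then use $\omega(0^+)=0$ qualitatively — is the right idea, but it must be implemented in an integral/layer-cake form rather than via a pointwise oscillation bound, since such a bound is unavailable for $u$ itself.
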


\begin{remark}\label{rm:translation-invariant}
Since the class of our equations is invariant under the transformation $x\mapsto x+y$, Proposition~\ref{lm:localized-compare-gradient} still holds true if $B_r$ is replaced by $B_r(y)$.
\end{remark}

We will use the fact (see  the proof of  \cite[Lemma~1]{T}) that condition \eqref{structural-reference-1}
  implies that  
  \begin{align}\label{structural-consequence}
&\big\langle  \A(x,z,\xi) -\A(x,z,\eta), \xi-\eta\big\rangle \geq 
\left \{
\begin{array}{lcll}
  4^{1-p}\Lambda^{-1} |\xi-\eta|^p &\text{if}\quad p\geq 2,\\
4^{-1}\Lambda^{-1} \big(|\xi| +|\eta|)^{p-2} |\xi-\eta|^2 &\quad\,\,\text{ if}\quad 1 <p<2
\end{array}\right. 
\end{align}
for a.e. $x\in B_{10}$, all $z\in \overline\K$, and all $\xi,\, \eta\in \R^n$. The next result is the first step of proving Proposition~\ref{lm:localized-compare-gradient}. 
\begin{lemma}\label{lm:step1}
For any $\e>0$, there exist small positive constants  $\delta$ and $\sigma$ depending only on $\e$,  $p$,  
$n$, $\omega$, $\Lambda$,   and $M_0$ such that: 
if $ \lambda>0$, $\theta>0$, 
\begin{equation*}
  \fint_{B_{4}}  |\F|^{p'} \, dx  \leq \delta,
\end{equation*}
and $u\in W_{loc}^{1,p}(B_{4})$ is a weak solution of $\div \Big[\frac{\A(x,\lambda\theta u, \lambda \nabla u)}{\lambda^{p-1}}\Big] = \div \F\,$ in $B_{4}$ satisfying
\begin{equation*}
 \|u\|_{L^\infty(B_{4})}\leq \frac{M_0}{\lambda\theta} \quad\mbox{ and }\quad \fint_{B_{4}}{|\nabla  u|^p \, dx}\leq 1,
\end{equation*}
then 
\begin{equation*}
\fint_{B_{4\sigma }}{|\nabla  u - \nabla  f|^p\, dx}\leq \e^p,
\end{equation*}
where $f$ is a weak solution of 
\begin{equation} \label{eq-f} 
 \left\{
 \begin{alignedat}{2}
  &\div \Big[\frac{\A(x,\lambda\theta \bar h_{B_{4 \sigma}}, \lambda \nabla f)}{\lambda^{p-1}}\Big]=0\quad \mbox{in}\quad B_{ 4 \sigma},\\\
&f=h \qquad\qquad\quad \qquad\qquad\,\,\, \mbox{on}\quad \partial B_{ 4 \sigma}
 \end{alignedat} 
  \right.
\end{equation}
with $\bar h_{B_{4 \sigma}} := \fint_{B_{4 \sigma}} h(x)\, dx$ and $h$ being a weak solution of 
\begin{equation} \label{eq-h}
 \left\{
 \begin{alignedat}{2}
  &\div \Big[\frac{\A(x,\lambda\theta u, \lambda \nabla h)}{\lambda^{p-1}}\Big]=0\quad \mbox{in}\quad B_{4},\\\
&h=u \qquad\qquad\quad \qquad\quad \,\,\,\,\,\mbox{on}\quad \partial B_{4}.
 \end{alignedat} 
  \right.
\end{equation}
\end{lemma}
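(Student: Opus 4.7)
The plan is to establish Lemma~\ref{lm:step1} by the usual two-step energy-comparison argument: first approximate $u$ by the $\F$-free solution $h$ on the full ball $B_4$, and then approximate $h$ by the constant-$z$ solution $f$ on the smaller ball $B_{4\sigma}$.

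For the first step I test the weak formulations for $u$ and $h$ against the common test function $u-h\in W_0^{1,p}(B_4)$ and subtract. Because both vector fields share the $z$-slot $\lambda\theta u$, the monotonicity inequality \eqref{structural-consequence} applies directly: for $p\geq 2$ it extracts $c\,\lambda^p\int|\nabla u-\nabla h|^p$, and for $1<p<2$ it yields the $(|\nabla u|+|\nabla h|)^{p-2}|\nabla u-\nabla h|^2$-quantity, from which H\"older combined with the energy bounds $\fint|\nabla u|^p,\,\fint|\nabla h|^p\lesssim 1$ (the latter following from a standard energy estimate for $h$) recovers the desired $|\nabla u-\nabla h|^p$. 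Together with Young's inequality on the $\F$-term this gives
\[
\fint_{B_4}|\nabla u-\nabla h|^p\, dx\leq C(p,\Lambda)\fint_{B_4}|\F|^{p'}\, dx\leq C\delta,
\]
and Poincar\'e yields $\fint_{B_4}|u-h|^p\leq C\delta$.

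For the second step I first record uniform interior regularity of $h$. Viewing its equation as $\div \mathcal B(x,\nabla h)=0$ with $\mathcal B(x,\xi):=\A(x,\lambda\theta u(x),\lambda\xi)/\lambda^{p-1}$, the field $\mathcal B$ satisfies \eqref{structural-reference-1}--\eqref{structural-reference-2} with constants independent of $\lambda,\theta,u$. Working with $H:=\lambda\theta h$ (which solves a uniform $p$-Laplace-type equation with $\|H\|_{L^\infty(B_4)}\leq M_0$ by a maximum-principle comparison against the boundary data), DiBenedetto--Tolksdorf interior H\"older regularity gives $\|H\|_{C^\alpha(B_3)}\leq C(n,p,\Lambda)M_0$ for some $\alpha=\alpha(n,p,\Lambda)>0$, and Gehring's lemma provides $\nabla h\in L^{p+\tau_0}_{loc}(B_3)$ for some $\tau_0>0$, all uniformly in the parameters. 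In particular $\lambda\theta|h(x)-\bar h_{B_{4\sigma}}|\leq CM_0\sigma^\alpha$ on $B_{4\sigma}$. I now test the equations for $h$ and $f$ against $h-f\in W_0^{1,p}(B_{4\sigma})$ and split
\[
\A(x,\lambda\theta u,\lambda\nabla h)-\A(x,\lambda\theta\bar h_{B_{4\sigma}},\lambda\nabla f)=\bigl[\A(x,\lambda\theta\bar h_{B_{4\sigma}},\lambda\nabla h)-\A(x,\lambda\theta\bar h_{B_{4\sigma}},\lambda\nabla f)\bigr]+\bigl[\A(x,\lambda\theta u,\lambda\nabla h)-\A(x,\lambda\theta\bar h_{B_{4\sigma}},\lambda\nabla h)\bigr];
\]
the first bracket contributes $c\,\lambda^p\int|\nabla(h-f)|^p$ via monotonicity, and the second is bounded by $\Lambda\lambda^{p-1}|\nabla h|^{p-1}\omega(\lambda\theta|u-\bar h_{B_{4\sigma}}|)$ via \eqref{structural-reference-3}, so Young's inequality yields
\[
\fint_{B_{4\sigma}}|\nabla(h-f)|^p\, dx\leq C\fint_{B_{4\sigma}}|\nabla h|^p\,\omega\bigl(\lambda\theta|u-\bar h_{B_{4\sigma}}|\bigr)^{p'}\, dx.
\]

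The main obstacle is making the right-hand side above $\leq \e^p$ uniformly in the two parameters $\lambda,\theta$, which is where the extension from H\"older-in-$z$ (as in \cite{BPS}) to a general modulus $\omega$ needs care. I write $|u-\bar h_{B_{4\sigma}}|\leq |u-h|+|h-\bar h_{B_{4\sigma}}|$; the second summand is absorbed by the uniform H\"older oscillation bound from the regularity step, while for the first I decompose $B_{4\sigma}=G_\tau\cup E_\tau$ with $G_\tau:=\{\lambda\theta|u-h|\leq\tau\}$. On $G_\tau$, monotonicity of $\omega$ gives the pointwise bound $\omega(\lambda\theta|u-\bar h_{B_{4\sigma}}|)\leq\omega(\tau+CM_0\sigma^\alpha)$, which is small by right-continuity of $\omega$ at $0$; on $E_\tau$, the trivial bound $\omega\leq\omega(2M_0)$ and Gehring's higher integrability yield $|B_{4\sigma}|^{-1}\int_{E_\tau}|\nabla h|^p\leq C(|E_\tau|/|B_{4\sigma}|)^{\tau_0/(p+\tau_0)}$. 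The delicate point is controlling $|E_\tau|/|B_{4\sigma}|$ uniformly in $\lambda,\theta$: the uniform pointwise bound $\lambda\theta|u-h|\leq 2M_0$ forces $E_\tau=\emptyset$ for $\tau\geq 2M_0$, while for $\tau<2M_0$ the $L^p$-smallness $\fint_{B_4}|u-h|^p\leq C\delta$ from the first step interpolates against this $L^\infty$ bound to give the needed Chebyshev control once $\sigma$ is fixed. Choosing first $\sigma$ (to handle the H\"older oscillation and the Gehring exponent balance), then $\tau$ (to make the good-set contribution small), and finally $\delta$ (to make the bad-set measure small) yields the claimed $\leq \e^p$ bound.
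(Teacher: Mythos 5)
Your first step and the overall architecture of your second step coincide with the paper's: compare $u$ to the $\F$-free solution $h$ on $B_4$ via monotonicity and Young, and then compare $h$ to $f$ on $B_{4\sigma}$, reducing to an integral of the form $\fint_{B_{4\sigma}}\omega(\lambda\theta|u-\bar h_{B_{4\sigma}}|)^{p'}|\nabla h|^p\,dx$ which is handled by splitting the domain into a good set (where the argument of $\omega$ is small) and a bad set (measured by Chebyshev). The first step is fine.

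However, there is a genuine gap at exactly the point you flag as "delicate," and the resolution you propose does not work. You need $|E_\tau|=|\{x\in B_{4\sigma}:\lambda\theta|u-h|(x)>\tau\}|$ small. Any Chebyshev estimate off the $L^p$ bound from Step 1 gives
\[
|E_\tau|\leq \frac{(\lambda\theta)^p}{\tau^p}\int_{B_{4\sigma}}|u-h|^p\,dx\leq \frac{(\lambda\theta)^p}{\tau^p}\,C\delta\,|B_4|,
\]
and the factor $(\lambda\theta)^p$ is \emph{not} bounded by the hypotheses of the lemma; $\lambda$ and $\theta$ are arbitrary positive parameters. Interpolating the $L^p$ smallness against the $L^\infty$ bound $\lambda\theta|u-h|\leq 2M_0$ does not remove this factor: for any intermediate exponent $r$, $\|\lambda\theta(u-h)\|_{L^r}$ still carries a positive power of $\lambda\theta$. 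So once you fix $\sigma$ and $\tau$, you cannot choose a single $\delta$ (independent of $\lambda,\theta$) making $|E_\tau|$ small. The paper resolves this with a dichotomy on the size of $\lambda\theta$ via the threshold \eqref{small-para}: if $\frac{C_*}{\sigma^n}\big(\frac{M_0}{\lambda\theta}\big)^p\leq \e^p/2$, then the trivial Caccioppoli-type bound $\fint_{B_{4\sigma}}|\nabla(h-f)|^p\leq \frac{C_*}{\sigma^n}\|h\|_{L^\infty(B_4)}^p\leq \frac{C_*}{\sigma^n}\big(\frac{M_0}{\lambda\theta}\big)^p$ already closes the estimate without touching $\omega$; in the complementary regime $(\lambda\theta)^p$ is bounded in terms of $\sigma,\e,M_0$, and only then does the Chebyshev/good--bad-set argument give $|E_\tau|$ small for $\delta$ chosen depending on $\e,\sigma,\tau,M_0$. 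Without this case split your chain of parameter choices ($\sigma$, then $\tau$, then $\delta$) does not yield a $\delta$ that works uniformly in $\lambda,\theta$. A secondary issue is the ordering of $\sigma$ and the smallness parameter in $\omega(\cdot)$: after the dichotomy the paper first fixes $\gamma$ (your $\tau$) so that $\omega(\gamma)^{p'}$ is small, and only then $\sigma$ so small that the oscillation contribution $\sim(\sigma^\beta/\gamma)^p$ and the prefactor $\sigma^{-2n}$ can be dominated; choosing $\sigma$ before $\tau$ risks the $\sigma^{-2n}/\tau^p$ term becoming uncontrolled.
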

The proof of this result relies on  the following classical regularity result for 
weak solutions of homogeneous equation \eqref{eq-h}
  (see for instance \cite[estimates~(6.~64) and (7.45)]{Gi}).
\begin{lemma}[classical interior regularity]\label{lm:inter-regularity} Assume that $\ba(x,\xi)$ satisfies 
\eqref{structural-reference-1}--\eqref{structural-reference-2} with $p>1$. 
Let  $R\in (0,10)$ and $w\in W^{1,p}(B_R)$ be a weak solution of $\div \ba(x,\nabla w)=0$ in $B_R$. Then there exist constants $p_0\in (p,\infty)$,  $\beta \in (0,1)$, and $C>0$ depending only on $p$, $n$, and $\Lambda$ such that  for every $r\in (0,R)$ we have 
\begin{align*}
&\Big(\fint_{B_r} |\nabla w|^{p_0} dx\Big)^{\frac1p_0} \leq  C \Big(\fint_{B_R} |\nabla w|^{p} dx\Big)^{\frac1p}\quad\,\,\mbox{ and }\, \, \quad \sup_{x\in B_r}{|w(x) -\bar w_{B_r}|}  
\leq C \big(\frac{r}{R}\big)^\beta
\, \|w\|_{L^\infty(B_R)}.
\end{align*} 
\end{lemma}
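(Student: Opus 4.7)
The plan is to prove the two inequalities separately via classical techniques for weak solutions of homogeneous equations with standard $p$-growth; these are precisely the conclusions (6.64) and (7.45) of Giusti \cite{Gi}, and the sketch below mirrors what is done there. For the higher integrability, I would begin with a Caccioppoli inequality: fixing $B_\rho \subset B_R$ and a cutoff $\eta \in C_c^\infty(B_\rho)$ with $\eta \equiv 1$ on $B_{\rho/2}$, I would test the equation $\div \ba(x,\nabla w)=0$ with $\varphi = \eta^p(w-\bar w_{B_\rho})$, then use \eqref{structural-reference-1}--\eqref{structural-reference-2} together with Young's inequality to absorb terms and obtain
\begin{equation*}
\int_{B_{\rho/2}} |\nabla w|^p\, dx \leq \frac{C(p,\Lambda)}{\rho^p}\int_{B_\rho} |w-\bar w_{B_\rho}|^p\, dx.
\end{equation*}
Applying the Sobolev--Poincar\'e inequality with the subcritical exponent $p_* = np/(n+p)<p$ on the right-hand side converts this into a reverse H\"older inequality
\begin{equation*}
\fint_{B_{\rho/2}} |\nabla w|^p\, dx \leq C\Big(\fint_{B_\rho}|\nabla w|^{p_*}\, dx\Big)^{p/p_*},
\end{equation*}
and Gehring's self-improving lemma produces $p_0>p$ and a constant $C$, depending only on $p$, $n$, $\Lambda$, for which the claimed $L^{p_0}$ bound holds on every $B_r\subset B_{R/2}$; a standard covering argument upgrades it to all $r<R$.

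For the H\"older estimate, the scheme is the De Giorgi--Nash--Moser technique adapted to the $p$-Laplace structure. Testing the equation with $\eta^p(w-k)_+$ and $\eta^p(w-k)_-$ at varying levels $k$ produces level-set Caccioppoli inequalities which place $w$ in the $p$-De Giorgi class. The central step is a measure-theoretic dichotomy on each ball $B_\rho\subset B_R$: either $w$ exceeds the midpoint of its oscillation on a subset of uniformly positive relative measure, or it lies below that midpoint on such a subset. Combining the appropriate Caccioppoli inequality with a De Giorgi iteration in the favorable half then yields a strict oscillation reduction
\begin{equation*}
\sup_{B_{\rho/2}} w - \inf_{B_{\rho/2}} w \leq \theta\Big(\sup_{B_\rho} w - \inf_{B_\rho} w\Big)
\end{equation*}
with a fixed $\theta=\theta(p,n,\Lambda)\in(0,1)$. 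Dyadic iteration upgrades this to a H\"older estimate with exponent $\beta = \log_2(1/\theta)\in(0,1)$; using $\sup_{B_R} w - \inf_{B_R} w \leq 2\|w\|_{L^\infty(B_R)}$ and $|w(x)-\bar w_{B_r}|\leq \sup_{B_r} w - \inf_{B_r} w$ then yields the stated bound.

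The main obstacle, were one to write this out in full, is the oscillation-reduction dichotomy, which rests on the quantitative De Giorgi measure lemma relating the measure of sublevel sets of a Sobolev function to its Dirichlet energy, and which must be applied carefully to truncations of $w$ in order to extract the factor $\theta<1$ uniformly in $p$, $n$, $\Lambda$. The higher-integrability half, by contrast, is a routine application of Gehring's lemma to the reverse H\"older inequality coming from Caccioppoli and Sobolev--Poincar\'e. Since both assertions appear in precisely the stated form as (6.64) and (7.45) of \cite{Gi}, we invoke them directly rather than reproving them.
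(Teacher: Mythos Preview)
Your proposal is correct and matches the paper's approach exactly: the paper does not prove this lemma either but simply cites it as estimates~(6.64) and (7.45) in Giusti~\cite{Gi}, precisely as you do in your final paragraph. Your sketch of the Caccioppoli--Gehring and De~Giorgi oscillation-reduction arguments is an accurate outline of how those results are established in~\cite{Gi}, though the paper itself offers no such outline.
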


\begin{proof}[\bf Proof of Lemma~\ref{lm:step1}] 
We present the complete proof only for the case $p\geq 2$ by using an idea in \cite{BPS}, and will indicate at the end how the argument can be modified for the case $1<p<2$.
Consider $p\geq 2$ and, for convenience, let $\tilde \A(x, z,\xi) := \frac{\A(x,\lambda \theta z, \lambda \xi)}{\lambda^{p-1}}$.  We write
\[
\nabla u -\nabla f = \nabla (u-h) +\nabla (h-f)
\]
and will estimate $\| \nabla (u-h)\|_{L^p(B_{4\sigma})}$ and  $\| \nabla (h-f)\|_{L^p(B_{4\sigma})}$.
   By using $u-h$ as a test function in the equations for $u$ and $h$ we have
\begin{equation*}
\int_{B_4}\langle \tilde\A(x,u,\nabla u)- \tilde \A(x,u,\nabla h), \nabla(u-h) \rangle \, dx = \int_{B_4}\langle \F, \nabla(u-h) \rangle \, dx.
\end{equation*}
We then  use \eqref{structural-consequence} to bound the above left hand side from below. As a consequence, we obtain
\begin{equation*}
\int_{B_4}| \nabla(u-h) |^p dx\leq 4^{p-1}\Lambda \int_{B_4}\langle \F, \nabla(u-h) \rangle \, dx.
\end{equation*}
Then we infer from Young's inequality that
\begin{equation}\label{u-h-0}
 \int_{B_4}| \nabla(u-h) |^p dx\leq C \int_{B_4}| \F|^{p'} dx
\end{equation}
yielding
\begin{equation}\label{u-h}
\fint_{B_{4 \sigma}}| \nabla(u-h) |^p dx\leq \frac{C }{\sigma^n} \fint_{B_4}| \F|^{p'} dx.
\end{equation}
  By using $h-f$ as a test function in the equations for $h$ and $f$ we have
\begin{equation*}
\int_{B_{4 \sigma}}\langle \tilde \A(x,\bar h_{B_{4 \sigma}},\nabla f), \nabla(h-f) \rangle \, dx = \int_{B_{4 \sigma}}\langle \tilde \A(x,u,\nabla h), \nabla(h-f) \rangle \, dx.
\end{equation*}
This together with  \eqref{structural-consequence} gives
\begin{align}\label{est-nabla-h-f}
\int_{B_{4 \sigma}} |\nabla (h-f)|^p\, dx 
&\leq 4^{p-1}\Lambda \int_{B_{4 \sigma}}\langle \tilde\A(x,\bar h_{B_{4 \sigma}},\nabla h)- \tilde \A(x,\bar h_{B_{4 \sigma} },\nabla f), \nabla(h-f) \rangle \, dx\nonumber\\
&= 4^{p-1}\Lambda \int_{B_{4 \sigma}}\langle \tilde\A(x,\bar h_{B_{4 \sigma}},\nabla h)- \tilde \A(x,u,\nabla h), \nabla(h-f) \rangle \, dx\nonumber\\
&\leq  4^{p-1}\Lambda \int_{B_{4 \sigma}}\min{\{2\Lambda, \omega(\lambda \theta |u-\bar h_{B_{4 \sigma}}|)\}}\,\, |\nabla h|^{p-1} |\nabla(h-f)| \, dx.
\end{align}
As a consequence of \eqref{est-nabla-h-f} and Young's inequality, we obtain
\[
\int_{B_{4\sigma}} |\nabla (h-f)|^p\, dx \leq C(p,\Lambda) \int_{B_{4 \sigma}} |\nabla h|^p \, dx.
\]
On the other hand, it follows  from   equation \eqref{eq-h} for $h$ that
$\int_{B_{4 \sigma}} |\nabla h|^p \, dx \leq C  \int_{B_{4}} |h|^p\, dx$.
Thus we conclude that
\begin{equation*}
\fint_{B_{4\sigma}} |\nabla (h-f)|^p\, dx \leq  \frac{C}{\sigma^n} \int_{B_{4}} |h|^p\, dx\leq \frac{ C_*}{\sigma^n}   \|h\|_{L^\infty(B_4)}^p\leq \frac{C_*}{\sigma^n}    \|u\|_{L^\infty(B_4)}^p\leq \frac{C_*}{\sigma^n} \Big(\frac{M_0}{\lambda \theta}\Big)^p.
\end{equation*}
This  together with \eqref{u-h} gives  the desired conclusion if $ \frac{C_*}{\sigma^n}  \Big(\frac{M_0}{\lambda \theta }\Big)^p 
\leq \e^p/2$. We hence only need to consider the case \begin{equation}\label{small-para} 
\frac{C_*}{\sigma^n}    \Big(\frac{M_0}{\lambda \theta }\Big)^p >\frac{\e^p}{2}.
\end{equation}
 For this, note first that \eqref{u-h-0} and the assumption yield
\begin{equation}\label{control-Lp-Dh}
\|\nabla h\|_{L^p(B_4)} \leq \|\nabla (h-u)\|_{L^p(B_4)} +\|\nabla u\|_{L^p(B_4)}\leq C\left[\Big(\int_{B_4} |\F|^{p'}dx\Big)^{\frac1p}  +1 \right]\leq C.
\end{equation}
We deduce from
 \eqref{est-nabla-h-f}, Young's inequality,  the higher integrability for $\nabla h$ given by Lemma~\ref{lm:inter-regularity}, and \eqref{control-Lp-Dh} that
\begin{align*}
\fint_{B_{4\sigma}} |\nabla (h-f)|^p\, dx 
&\leq C \fint_{B_{4\sigma}} \omega(\lambda \theta |u-\bar h_{B_{4\sigma}}|)^{p'}\, |\nabla h|^p  dx\\
&\leq C
\Big[\fint_{B_{4 \sigma}} \omega(\lambda \theta |u-\bar h_{B_{4 \sigma}}|)^{\frac{p' p_0}{p_0 - p}}  dx\Big]^{\frac{p_0 - p}{p_0}} \Big[\fint_{B_{4\sigma}}  |\nabla h|^{p_0} dx\Big]^{\frac{p}{p_0}}\\
&\leq 
C \Big[\fint_{B_{4 \sigma}} \omega(\lambda \theta |u-\bar h_{B_{4 \sigma}}|)^{\frac{p' p_0}{p_0 - p}}  dx\Big]^{\frac{p_0 - p}{p_0}} \fint_{B_{4}}  |\nabla h|^{p}  dx\leq  C \Big[\fint_{B_{4 \sigma}} \omega(\lambda \theta |u-\bar h_{B_{4\sigma}}|)^{\frac{p' p_0}{p_0 - p}}  dx\Big]^{\frac{p_0 - p}{p_0}}
\end{align*}
with $p_0>p$ and $C>0$ depending only on $p$, $n$, and $\Lambda$. 
As for any $\gamma>0$, we have 
\begin{align*}
\int_{B_{4 \sigma}} \omega(\lambda \theta |u-\bar h_{B_{4 \sigma}}|)^{\frac{p' p_0}{p_0 - p}}  dx
&= \int_{\{B_{4\sigma}: \lambda \theta |u-\bar h_{B_{4\sigma}}| \leq \gamma  \}}
\omega(\lambda \theta |u-\bar h_{B_{4 \sigma}}|)^{\frac{p' p_0}{p_0 - p}}  dx\\
&\qquad \qquad \qquad \qquad \qquad +\int_{\{B_{4 \sigma}: \lambda \theta |u-\bar h_{B_{4\sigma}}| > \gamma  \}} \omega(\lambda \theta |u-\bar h_{B_{4 \sigma}}|)^{\frac{p' p_0}{p_0 - p}}  dx \\
&\leq |B_{4\sigma}| \, \omega(\gamma)^{\frac{p' p_0}{p_0 - p}}  +\frac{\omega(2 M_0)^{\frac{p' p_0}{p_0 - p}} }{\gamma^p}  \int_{B_{4 \sigma}} \big(\lambda \theta |u-\bar h_{B_{4 \sigma}}|\big)^p dx,
\end{align*}
we infer that
\begin{equation}\label{est-delta}
\fint_{B_{4 \sigma}} |\nabla (h-f)|^p\, dx 
\leq C  \omega(\gamma)^{p'} + C \frac{\omega(2 M_0)^{p'}}{\gamma^{\frac{p(p_0 -  p)}{p_0}}}  \Big[(\lambda \theta)^p\fint_{B_{4 \sigma}} |u-\bar h_{B_{4 \sigma}}|^p dx\Big]^{\frac{p_0 - p}{p_0}}\quad \forall \gamma>0.
\end{equation}
Let us now estimate the  last integral in \eqref{est-delta}. Using Sobolev's inequality and the oscillation estimate in Lemma~\ref{lm:inter-regularity}, we have
\begin{align*}
\fint_{B_{4\sigma} }|u-\bar h_{B_{4 \sigma}}|^p\, dx 
&\leq 2^{p-1} \Big[\frac{1}{|B_{4\sigma}| } \int_{B_{4}} |u- h|^p\, dx   + \fint_{B_{4 \sigma}} |h-\bar h_{B_{ 4\sigma}}|^p\, dx \Big]\\
& \leq C  \Big[ \sigma^{-n}\int_{B_{4}} |\nabla (u- h)|^p\, dx   +\big(\sigma^{\beta }  \| h\|_{L^\infty(B_4)}\big)^p \Big]. 
\end{align*}
This together with \eqref{small-para}  and the fact $ \| h\|_{L^\infty(B_4)}\leq M_0/\lambda\theta$ yields
\begin{align*}
(\lambda \theta)^p\fint_{B_{4 \sigma} }|u-\bar h_{B_{4\sigma}}|^p\, dx 
\leq C  \Big[\sigma^{-2 n}\big(\frac{M_0 }{\e }\big)^p  \int_{B_{4}} |\nabla (u- h)|^p\, dx   +\big(M_0 \sigma^{\beta } \big)^p \Big]. 
\end{align*}
Plugging this estimate into \eqref{est-delta} gives 
\begin{align*}
\fint_{B_{4\sigma}} |\nabla (h-f)|^p\, dx 
\leq  C  \omega(\gamma)^{p'}  +  C  M_0^{\frac{p(p_0 -  p)}{p_0}} \omega(2 M_0)^{p'} \left[\frac{\sigma^{-2n} }{(\gamma \e  )^p} 
\int_{B_{4}} |\nabla (u- h)|^p\, dx   +
\big(\frac{\sigma^{\beta }}{\gamma }\big)^p
 \right]^{\frac{p_0 -  p}{p_0}}.
\end{align*}
By combining this with \eqref{u-h} and using \eqref{u-h-0} we obtain 
\begin{align}\label{pertur-p>2}
\fint_{B_{4\sigma}} |\nabla (u-f)|^p\, dx
&\leq \frac{C}{\sigma^n}\fint_{B_4}|\F|^{p'} dx\nonumber\\
&\quad + C \omega(\gamma)^{p'} + C M_0^{\frac{p(p_0 -  p)}{p_0}} \omega(2 M_0)^{p'} 
\left[\frac{\sigma^{-2 n} }{(\gamma \e  )^p } \fint_{B_4}|\F|^{p'}dx +  \big(\frac{\sigma^{\beta }}{\gamma }\big)^p \right]^{\frac{p_0 -  p}{p_0}}
\end{align}
for every $\gamma>0$. From this, we get the desired conclusion by  choosing $\gamma$ small first,
then $\sigma$, and  $\delta$ last.
 For the case $1<p<2$, the proof is similar with some 
slight adjustments as follows, which can also be found in the proofs of
Lemma~4.3 and Lemma~4.6 in \cite{BPS}. By arguing as above but using 
 \eqref{structural-consequence} together with   \cite[Lemma~3.1]{NP}, then in place  of \eqref{u-h} and \eqref{est-nabla-h-f}  we now 
 have  for every $\tau_1, \,\tau_2>0$ small that
\begin{equation*}
\fint_{B_{4 \sigma}}| \nabla(u-h) |^p dx\leq \frac{C}{\sigma^n} \Bigg[
\tau_1 \fint_{B_4} |\nabla u|^p\, dx +   \tau_1^{\frac{p-2}{p-1}} \fint_{B_4}| \F|^{p'} dx\Bigg]
\leq \frac{C}{\sigma^n} \Bigg[
\tau_1  +   \tau_1^{\frac{p-2}{p-1}} \fint_{B_4}| \F|^{p'} dx\Bigg]
\end{equation*}
and 
\begin{align*}
\int_{B_{4 \sigma}} |\nabla (h-f)|^p\, dx 
&\leq \tau_2 \int_{B_{4 \sigma}} |\nabla h|^p\, dx +
C_p\Lambda \tau_2^{1-\frac2p} \int_{B_{4 \sigma}}\langle \tilde\A(x,\bar h_{B_{4 \sigma}},\nabla h)- \tilde \A(x,\bar h_{B_{4 \sigma} },\nabla f), \nabla(h-f) \rangle \, dx\nonumber\\
&\leq  \tau_2 \int_{B_{4 \sigma}} |\nabla h|^p\, dx +
C_p\Lambda \tau_2^{1-\frac2p}  \int_{B_{4 \sigma}}\min{\{2\Lambda, \omega(\lambda \theta |u-\bar h_{B_{4 \sigma}}|)\}}\,\, |\nabla h|^{p-1} 
|\nabla(h-f)| \, dx.
\end{align*}
With these changes and by repeating the above lines of argument, we obtain the following version of
\eqref{pertur-p>2} for the case $1<p<2$:
\begin{align*}
\fint_{B_{4\sigma}} |\nabla (u-f)|^p\, dx
&\leq \frac{C}{\sigma^n} \Big[
\tau_1  +   \tau_1^{\frac{p-2}{p-1}} \fint_{B_4}| \F|^{p'} dx\Big]\\
&\quad +C\tau_2 + C \tau_2^{\frac{p-2}{p-1}} \Bigg\{ \omega(\gamma)^{p'} +  M_0^{\frac{p(p_0 -  p)}{p_0}} \omega(2 M_0)^{p'} 
\Big[\frac{\sigma^{-2 n} }{(\gamma \e  )^p } \fint_{B_4}|\F|^{p'}dx +  \big(\frac{\sigma^{\beta }}{\gamma }\big)^p \Big]^{\frac{p_0 -  p}{p_0}}
\Bigg\}.
\end{align*}
We then get the conclusion by  choosing $\tau_2$ small first,  $\gamma$ second, $\sigma$ third,
then $\tau_1$,  and  $\delta$ last.
\end{proof}

To obtain Proposition~\ref{lm:localized-compare-gradient}, our second step is to show that the gradient of the solution $f$ to \eqref{eq-f} 
can be approximated by a bounded gradient. Precisely, we have:
\begin{lemma}\label{lm:step2} 
Let $\e\in (0,1]$, and let  $\sigma$ be its corresponding constant given by  Lemma~\ref{lm:step1}.  
Let  $\F$,  $u$, and $h$  be  as in Lemma~\ref{lm:step1}, and  
assume in addition that $\fint_{B_{4\sigma}} |\nabla u|^p\, dx\leq 1$. 
Suppose that  $f$ is a weak solution of $\div \Big[\frac{\A(x,\lambda\theta \bar h_{B_{4 \sigma}}, \lambda \nabla f)}{\lambda^{p-1}}\Big] = 0$ in $B_{4\sigma}$ and 
$v$ is a weak solution of 
\begin{equation} \label{eq-v}
 \left\{
 \begin{alignedat}{2}
  &\div \Big[\frac{\bar\A_{B_{3\sigma}}(\lambda\theta \bar h_{B_{4 \sigma}}, \lambda \nabla v)}{\lambda^{p-1}}\Big]=0\quad \mbox{in}\quad B_{3\sigma},\\\
&v=f \qquad\qquad \qquad\qquad\qquad\mbox{on}\quad \partial B_{3\sigma}.
 \end{alignedat} 
  \right.
\end{equation}
There exist constants $p_0\in (p,\infty)$ and $C>0$ depending only on $p$, $n$, and $\Lambda$ such that: 
 if $p\geq 2$, then
\begin{equation}\label{est-oscillation}
\fint_{B_{3 \sigma}}{|\nabla  f- \nabla  v|^p\, dx}\leq C \,\,  \Theta_{B_{3 \sigma}}(\A)^{\frac{p_0 - p}{p_0}},
\end{equation}
and if  $1<p<2$, then
\begin{align*}
\fint_{B_{3\sigma}} |\nabla f-\nabla v|^p\, dx 
&\leq 
  \tau 
+ C   \tau^{(1-\frac2p)p'}\Theta_{B_{3\sigma}}(\A)^{\frac{p_0 - p}{p_0}}\quad \mbox{for every}\quad \tau\in (0,1).
\end{align*}
\end{lemma}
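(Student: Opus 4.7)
The natural test function is $f-v\in W^{1,p}_0(B_{3\sigma})$, admissible by the boundary condition in \eqref{eq-v}. Using it in both the equation for $f$ and in \eqref{eq-v} and subtracting gives, with $z_0:=\bar h_{B_{4\sigma}}$,
\begin{equation*}
\int_{B_{3\sigma}}\Big\langle\frac{\bar\A_{B_{3\sigma}}(\lambda\theta z_0,\lambda\nabla f)-\bar\A_{B_{3\sigma}}(\lambda\theta z_0,\lambda\nabla v)}{\lambda^{p-1}},\nabla(f-v)\Big\rangle dx
=\int_{B_{3\sigma}}\Big\langle\frac{\bar\A_{B_{3\sigma}}(\lambda\theta z_0,\lambda\nabla f)-\A(x,\lambda\theta z_0,\lambda\nabla f)}{\lambda^{p-1}},\nabla(f-v)\Big\rangle dx.
\end{equation*}
On the right-hand side I set $\Phi(x):=\sup_{\xi\neq 0}|\A(x,\lambda\theta z_0,\xi)-\bar\A_{B_{3\sigma}}(\lambda\theta z_0,\xi)|/|\xi|^{p-1}$; then \eqref{structural-reference-2} gives $\Phi\leq 2\Lambda$, and since $|\lambda\theta z_0|\leq \lambda\theta\|u\|_{L^\infty(B_4)}\leq M_0$, definition \eqref{def:Theta} yields $\fint_{B_{3\sigma}}\Phi\,dx\leq\Theta_{B_{3\sigma}}(\A)$.

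For $p\geq 2$, I bound the left-hand side from below by $c\int_{B_{3\sigma}}|\nabla(f-v)|^p\,dx$ via the first case of \eqref{structural-consequence} (which passes to averaged coefficients by linearity in $x$), and the right-hand side from above by $\int_{B_{3\sigma}}\Phi\,|\nabla f|^{p-1}|\nabla(f-v)|\,dx$. Young's inequality absorbs the gradient difference and reduces the task to estimating $\int\Phi^{p'}|\nabla f|^p\,dx$. I then apply Hölder with exponent $p_0/p$, where $p_0>p$ is the higher-integrability exponent from Lemma~\ref{lm:inter-regularity} applied to $f$, and use $\Phi\leq 2\Lambda$ to convert $\Phi^{p'p_0/(p_0-p)}$ back to $\Phi$ itself; this produces
\begin{equation*}
\fint_{B_{3\sigma}}|\nabla(f-v)|^p\,dx\leq C\,\Theta_{B_{3\sigma}}(\A)^{(p_0-p)/p_0}\Big(\fint_{B_{3\sigma}}|\nabla f|^{p_0}\,dx\Big)^{p/p_0}.
\end{equation*}
To close the argument I must bound the last factor by a constant depending only on $p,n,\Lambda$. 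An energy estimate (testing the equation for $f$ against $f-h\in W^{1,p}_0(B_{4\sigma})$) gives $\fint_{B_{4\sigma}}|\nabla f|^p\leq C\fint_{B_{4\sigma}}|\nabla h|^p$, which in turn is $\leq C$ by \eqref{u-h-0} and the standing hypothesis $\fint_{B_{4\sigma}}|\nabla u|^p\leq 1$; Lemma~\ref{lm:inter-regularity} applied on the fixed pair $B_{3\sigma}\subset B_{4\sigma}$ then upgrades this to $(\fint_{B_{3\sigma}}|\nabla f|^{p_0})^{p/p_0}\leq C$, and \eqref{est-oscillation} follows.

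For $1<p<2$, the lower bound in \eqref{structural-consequence} is degenerate, so I employ the splitting device used in the proof of Lemma~\ref{lm:step1}: \cite[Lemma~3.1]{NP} gives, for any $\tau\in(0,1)$,
\begin{equation*}
|\nabla(f-v)|^p\leq\tau\bigl(|\nabla f|+|\nabla v|\bigr)^p+C_p\tau^{1-2/p}\bigl(|\nabla f|+|\nabla v|\bigr)^{p-2}|\nabla(f-v)|^2.
\end{equation*}
Integrating and using the second case of \eqref{structural-consequence} to identify the last factor with the left-hand side of the subtracted identity lets me re-run the Hölder plus higher-integrability estimate above, after a standard comparison $\fint|\nabla v|^p\leq C\fint|\nabla f|^p$ derived from \eqref{eq-v}; the outcome is the claimed bound $\tau+C\tau^{(1-2/p)p'}\Theta_{B_{3\sigma}}(\A)^{(p_0-p)/p_0}$. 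The step requiring the most care is tracking uniformity: every constant must depend only on $p,n,\Lambda$ and not on $\sigma$ or $\lambda,\theta$. This is ensured by the scale- and parameter-invariance of \eqref{structural-reference-1}--\eqref{structural-reference-2} under $\xi\mapsto\lambda\xi$, by working exclusively with averaged quantities, and by the fact that $B_{3\sigma}\subset B_{4\sigma}$ lies in a fixed ratio so that the constant in Lemma~\ref{lm:inter-regularity} is absolute.
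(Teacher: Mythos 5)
Your overall architecture mirrors the paper's: test $f-v$ in both equations, subtract, control the right-hand side by $\Phi(x)$, apply Young and then H\"older with the higher-integrability exponent $p_0$ from Lemma~\ref{lm:inter-regularity}, use $\Phi\leq 2\Lambda$ to reduce $\Phi^{p'p_0/(p_0-p)}$ to $\Phi$, and finally invoke definition \eqref{def:Theta} with $\lambda\theta\bar h_{B_{4\sigma}}\in\overline\K\cap[-M_0,M_0]$. The $1<p<2$ adaptation via the $\tau$-splitting is also the same device. Where you genuinely diverge is in closing the last factor $\big(\fint_{B_{3\sigma}}|\nabla f|^{p_0}\big)^{p/p_0}\leq C\fint_{B_{4\sigma}}|\nabla f|^p$ to a universal constant.

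There is a gap here. You bound $\fint_{B_{4\sigma}}|\nabla f|^p\leq C\fint_{B_{4\sigma}}|\nabla h|^p$ by an energy estimate and then try to get $\fint_{B_{4\sigma}}|\nabla h|^p\leq C$ from \eqref{u-h-0} and $\fint_{B_{4\sigma}}|\nabla u|^p\leq 1$. But \eqref{u-h-0} controls $\int_{B_4}|\nabla(u-h)|^p$, not the average over the (much smaller) ball $B_{4\sigma}$: converting produces $\fint_{B_{4\sigma}}|\nabla(u-h)|^p\leq\sigma^{-n}\fint_{B_4}|\nabla(u-h)|^p\leq C\sigma^{-n}\delta$. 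The factor $\sigma^{-n}\delta$ is not bounded by anything intrinsic to the \emph{statement} of Lemma~\ref{lm:step1}; the fact that it is bounded by a universal quantity is buried in the way $\delta$ is chosen \emph{last and smaller than $\sigma^n$ times a universal constant} inside the \emph{proof} of Lemma~\ref{lm:step1} (see the first term of \eqref{pertur-p>2}). Without invoking this, your constant would depend on $\sigma$, hence on $\e$, $\omega$, $M_0$, contradicting the claim that $C=C(p,n,\Lambda)$. The paper avoids this entirely by using the \emph{conclusion} of Lemma~\ref{lm:step1} on $B_{4\sigma}$ rather than the intermediate estimate \eqref{u-h-0}: since $\fint_{B_{4\sigma}}|\nabla u-\nabla f|^p\leq\e^p\leq 1$ and $\fint_{B_{4\sigma}}|\nabla u|^p\leq 1$ by the added hypothesis, one gets directly $\fint_{B_{4\sigma}}|\nabla f|^p\leq 2^{p-1}(\e^p+1)\leq 2^p$, with no reference to $\sigma$, $\delta$, $h$, or $\F$. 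I would recommend replacing your detour through $h$ by this two-line bound; everything else in your proposal then closes cleanly with the stated dependence.
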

\begin{proof}  For convenience, define 
 $\ba(x, \xi) := \frac{\A(x,\lambda \theta \bar h_{B_{4\sigma}}, \lambda \xi)}{\lambda^{p-1}}$ and $\bar\ba_{B_{3\sigma}}(\xi) := \fint_{B_{3\sigma}} \ba(x,\xi) \, dx$.
We first consider the case $p\geq 2$. Then    using \eqref{structural-consequence} we get 
\begin{equation}\label{p-geq-2}
\int_{B_{3\sigma}} |\nabla ( f- v)|^p\, dx 
\leq \Lambda \int_{B_{3\sigma}}\langle \bar\ba_{B_{3\sigma}}(\nabla f)- \bar\ba_{B_{3\sigma}}(\nabla v), \nabla (f- v) \rangle \, dx :=I.
\end{equation}
To estimate the term $I$, we use  $f-v$ as a test function in equation  \eqref{eq-v}  and the equation for $f$ to obtain
\begin{equation*}
\int_{B_{3\sigma}}\langle \bar\ba_{B_{3\sigma}}(\nabla v), \nabla(f-v) \rangle \, dx= \int_{B_{3\sigma}}\langle \ba(x,\nabla f), \nabla(f-v) \rangle \, dx.
\end{equation*}
Therefore,
\begin{align}\label{est-I}
I= \Lambda \int_{B_{3\sigma}}\langle \bar\ba_{B_{3\sigma}}(\nabla f)-  \ba(x,\nabla f), \nabla(f-v) \rangle \, dx\leq  \Lambda \int_{B_{3\sigma}} \sup_{\xi\neq 0} \frac{|\ba(x,\xi) -
\bar\ba_{B_{3\sigma}}(\xi)| }{|\xi|^{p-1}}\,\, |\nabla f|^{p-1} |\nabla(f-v)| \, dx.
\end{align}
It follows from \eqref{p-geq-2}--\eqref{est-I},  Young's inequality, the interior higher integrability for $\nabla f$  given 
by Lemma~\ref{lm:inter-regularity}, and the fact $|\ba(x,\xi)|\leq \Lambda |\xi|^{p-1}$ that
\begin{align*}
\fint_{B_{3\sigma}} |\nabla (f-v)|^p\, dx 
&\leq C \fint_{B_{3\sigma}}\Big[ \sup_{\xi\neq 0} \frac{|\ba(x,\xi) -\bar\ba_{B_{3\sigma}}(\xi)| }{|\xi|^{p-1}}\Big]^{p'}\, |\nabla f|^p  dx\\
&\leq C
\left(\fint_{B_{3\sigma}}  \Big[ \sup_{\xi\neq 0} \frac{|\ba(x,\xi) -\bar\ba_{B_{3\sigma}}(\xi)| }{|\xi|^{p-1}}\Big]^{\frac{p' p_0}{p_0 - p}}  dx\right)^{\frac{p_0 - p}{p_0}} \left(\fint_{B_{3\sigma}}  |\nabla f|^{p_0} dx\right)^{\frac{p}{p_0}}\\
&\leq 
C \left(\fint_{B_{3\sigma}}  \sup_{\xi\neq 0} \frac{|\ba(x,\xi) -\bar\ba_{B_{3\sigma}}(\xi)| }{|\xi|^{p-1}} dx\right)^{\frac{p_0 - p}{p_0}} \fint_{B_{4\sigma}}  |\nabla f|^{p}  dx,
\end{align*}
where $C>0$ depends only on $p$, $n$, and $\Lambda$. But  we have from the definition of $\ba$ that
\begin{align*}
\sup_{\xi\neq 0} \frac{|\ba(x,\xi) -\bar\ba_{B_{3\sigma}}(\xi)| }{|\xi|^{p-1}} 
&= \sup_{\eta\neq 0} \frac{|\A(x,\lambda\theta \bar h_{B_{4\sigma}}, \eta) -\bar\A_{B_{3\sigma}}(\lambda\theta \bar h_{B_{4\sigma}}, 
\eta)| }{|\eta|^{p-1}}
 \end{align*}
and  Lemma~\ref{lm:step1} yields
\begin{equation}\label{est-nabla-f}
\fint_{B_{4\sigma}}  |\nabla f|^{p}  dx \leq 2^{p-1}\left[ \fint_{B_{4\sigma}} |\nabla f-\nabla u|^p\, dx  +
\fint_{B_{4\sigma}} |\nabla u|^p\, dx \right]\leq 2^{p-1}[\e^p +1 ]\leq 2^p.
\end{equation}
Therefore we conclude that
\begin{align*}
\fint_{B_{3\sigma}} |\nabla (f-v)|^p\, dx 
\leq 
C \left(\fint_{B_{3\sigma}}  \sup_{\xi\neq 0} \frac{|\A(x,\lambda \theta \bar h_{B_{4\sigma}}, \xi) -\bar\A_{B_{3\sigma}}(\lambda 
\theta \bar h_{B_{4\sigma}}, \xi)| }{|\xi|^{p-1}} dx\right)^{\frac{p_0 - p}{p_0}}.
\end{align*}
This together with 
the fact $\lambda\theta \bar h_{B_{4\sigma}}\in \overline{\K} \cap [-M_0, M_0]$ and the definition of $\Theta_{B_{3\sigma}}(\A)$ given by 
\eqref{def:Theta}
yields estimate \eqref{est-oscillation}.
We next consider 
the case $1<p<2$. Then   condition (1.2) in \cite{NP} is satisfied thanks to \eqref{structural-consequence}.
Therefore, instead of \eqref{p-geq-2}  we now 
 have from  \cite[Lemma~3.1]{NP} that 
\begin{equation*}
\int_{B_{3\sigma}} |\nabla (f-v)|^p\, dx 
\leq  \tau \int_{B_{3\sigma}} |\nabla f|^p\, dx + 
C_p  \tau^{1-\frac2p} I \quad\mbox{for all}\quad \tau\in (0,\frac12). 
\end{equation*}
Then we deduce from  estimate \eqref{est-I} for $I$ and Young's inequality that 
\begin{align*}
\fint_{B_{3\sigma}} |\nabla (f-v)|^p\, dx 
&\leq 
 2 \tau \int_{B_{3\sigma}} |\nabla f|^p\, dx
+ C(p,\Lambda)   \tau^{(1-\frac2p)p'}\fint_{B_{3\sigma}}\Big[ \sup_{\xi\neq 0} \frac{|\ba(x,\xi) -\bar\ba_{B_{3\sigma}}(\xi)| }{|\xi|^{p-1}}\Big]^{p'}\, |\nabla f|^p  dx.
\end{align*}
The first integral is estimated by \eqref{est-nabla-f}
 and  the last integral can be estimated exactly as above. As a consequence, we obtain 
\begin{align*}
\fint_{B_{3\sigma}} |\nabla (f-v)|^p\, dx 
&\leq 
 2^{p+1} \big(\frac43\big)^n \,  \tau 
+ C(p,n,\Lambda)   \tau^{(1-\frac2p)p'}\Theta_{B_{3\sigma}}(\A)^{\frac{p_0 - p}{p_0}}\quad\mbox{for all}\quad \tau\in (0,\frac12). 
\end{align*}
\end{proof}

We are now ready to prove Proposition~\ref{lm:localized-compare-gradient}.
\begin{proof}[\bf Proof of Proposition~\ref{lm:localized-compare-gradient}]
The stated result follows by scaling and then applying
Lemma~\ref{lm:step1} and Lemma~\ref{lm:step2}. Precisely, let us define
\[
\A'(x, z, \xi) = \A(r x, z,\xi),\quad \F'(x)=\F(r x), \quad u'(x) = r^{-1} u(r x),\quad\mbox{and }\, \,  \theta' = \theta r. 
\]
Then $u'$ is a weak solution of  $\div \Big[\frac{\A'(x,\lambda\theta' u', \lambda \nabla u')}{\lambda^{p-1}}\Big] = \div \F'\,$ in $B_{4}$ satisfying
\begin{equation*}
 \|u'\|_{L^\infty(B_{4})}\leq \frac{M_0}{\lambda\theta'} ,\quad \fint_{B_{4}}{|\nabla  u'|^p \, dx
 =\fint_{B_{4 r}}{|\nabla  u|^p \, dy}}\leq 1,\quad
  \fint_{B_{4\sigma }}{|\nabla  u'|^p \, dx
 =\fint_{B_{4\sigma  r}}{|\nabla  u|^p \, dy}}\leq 1.
\end{equation*}
We also have
\begin{equation*}
 \Theta_{B_{3\sigma}}(\A')=\Theta_{B_{3\sigma r}}(\A)\leq \delta  \quad\mbox{and}\quad\fint_{B_{4}}{|\F'|^{p'} \, dx
 =\fint_{B_{4 r}}{|\F|^{p'} \, dy}}\leq \delta.
\end{equation*}
Therefore, we can apply Lemma~\ref{lm:step1} and Lemma~\ref{lm:step2} to obtain that
\begin{equation}\label{u'-v'}
\fint_{B_{3\sigma}} |\nabla u' -\nabla v'|^pdx \leq \e^p,
\end{equation}
where $v'$ is a weak solution of 
\[
\div \Big[\frac{\bar \A'_{B_\sigma}(\lambda\theta' \bar h_{B_{4\sigma}}, \lambda \nabla v' )}{\lambda^{p-1}}\Big] =0 \quad \mbox{in}\quad B_{3 \sigma}.
\]
From the well known interior Lipschitz estimate for this constant coefficient equation  we have
\begin{equation}\label{nabla-v'}
\|\nabla v' \|_{L^\infty(B_{2\sigma})}^p\leq C(p,n,\Lambda) \fint_{B_{3\sigma}} |\nabla v'|^p\, dx.
\end{equation}
Let $v(y):= r v'(r^{-1} y)$. Then by rescaling we obtain from \eqref{u'-v'} and \eqref{nabla-v'} that
\begin{equation*}
\fint_{B_{3 \sigma r}} |\nabla u -\nabla v|^pdy \leq \e^p\quad \mbox{and}\quad
\|\nabla v \|_{L^\infty(B_{2\sigma r})}^p\leq C(p,n,\Lambda) \fint_{B_{3 \sigma r}} |\nabla v|^p\, dy.
\end{equation*}

\end{proof}

\section{Density and gradient estimates}\label{interior-density-gradient}
We derive interior  gradient estimates for weak solution $u$ of \eqref{ME} by estimating the distribution functions of the maximal
function of $|\nabla  u|^p$. This is carried out in the next two subsections, while the last subsection (Subsection~\ref{sec:Morrey-Spaces}) 
is devoted to proving the main results stated in Section~\ref{sec:Intro}.
\subsection{Density estimates}\label{sub:density-est}
The next result gives a density estimate for the distribution  of $\M_{B_1}(|\nabla  u|^p)$. It roughly says that  if the maximal 
function $\M_{B_1}(|\nabla  u|^p)$ is bounded at one point in $B_{\sigma r}(y)$ then this property can be propagated for all points in $B_{\sigma r}(y)$ 
except on a set of small measure $w$. 
\begin{lemma}\label{initial-density-est}
Assume that   $\A$ satisfies \eqref{structural-reference-1}--\eqref{structural-reference-3} and  $\F\in L^{p'}(B_{10};\R^n)$.
Let  $M_0>0$  and  $w$ be an $A_\infty$ weight.   There exists a constant $N=N(p, n,\Lambda)>1$ satisfying  
for  any $\e>0$, we can find   
small positive constants $\delta$  and $\sigma$ depending only on $\e,\, p,\, n,\, \omega,\, \Lambda,\, M_0$, and $[w]_{A_\infty}$
such that:  if $\lambda>0$, $\theta>0$,
\begin{equation*}\label{interior-SMO}
\sup_{0<\rho\leq \frac35}\sup_{y\in B_{\frac{\sigma}{10}}} \Theta_{B_{\sigma\rho}(y)}(\A)\leq  \delta,
\end{equation*}
then
  for any weak solution $u$ of \eqref{ME} with
 $\|u\|_{L^\infty(B_1)}\leq \frac{M_0}{\lambda \theta}$,
and for any $y\in B_{\frac{\sigma}{10}}$, $0<r\leq \frac15$ with
\begin{equation}\label{one-point-condition}
 B_{\sigma r}(y)\cap B_{\frac{\sigma}{10}}\cap \big\{B_1:\, \M_{B_1}(|\nabla  u|^p)\leq 1 \big\}\cap \{ B_1: \M_{B_1}(|\F|^{p'} )\leq \delta\}\neq \emptyset,
\end{equation}
we have
\begin{equation*}
 w\Big( \{ B_{\frac{\sigma}{10}}:\, \M_{B_1}(|\nabla  u|^p)> N \}\cap B_{\sigma r}(y)\Big)
< \e  \, w(B_{\sigma r}(y)).
\end{equation*}
\end{lemma}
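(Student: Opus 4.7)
The plan is to combine the gradient approximation from Proposition~\ref{lm:localized-compare-gradient} (applied at center $y$ via Remark~\ref{rm:translation-invariant}) with a standard maximal-function level-set decomposition, and then pass from Lebesgue measure to the weight $w$ using the $A_\infty$ characterization~\eqref{charac-A-infty}. The constant $N$ will be chosen at the end and will depend only on $p,n,\Lambda$, while $\delta$ and $\sigma$ will be determined from $\e$ after a suitable application of Proposition~\ref{lm:localized-compare-gradient}.

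First, by \eqref{one-point-condition} I pick $x_0\in B_{\sigma r}(y)\cap B_{\sigma/10}$ with $\M_{B_1}(|\nabla u|^p)(x_0)\leq 1$ and $\M_{B_1}(|\F|^{p'})(x_0)\leq \delta$. Since $|y-x_0|<\sigma r$, the inclusions $B_{4r}(y)\subset B_{5r}(x_0)\subset B_1$ and $B_{4\sigma r}(y)\subset B_{5\sigma r}(x_0)\subset B_1$ give
\begin{equation*}
\fint_{B_{4r}(y)}|\nabla u|^p\,dx,\ \fint_{B_{4\sigma r}(y)}|\nabla u|^p\,dx\leq (5/4)^n, \qquad \fint_{B_{4r}(y)}|\F|^{p'}\,dx\leq (5/4)^n\delta.
\end{equation*}
Using the invariance of the class~\eqref{ME} under $u\mapsto u/\kappa$, $\lambda\mapsto \lambda\kappa$, $\F\mapsto \F/\kappa^{p-1}$ with $\kappa=(5/4)^{n/p}$ (note that $(p-1)p'=p$ and the $L^\infty$ bound $\|u\|_\infty\leq M_0/(\lambda\theta)$ rescales correctly), I reduce to the normalized situation with gradient averages bounded by $1$ and the force average bounded by $\delta$. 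The hypothesis on $\Theta$ gives $\Theta_{B_{3\sigma r}(y)}(\A)\leq \delta$ since $3r\leq 3/5$ and $y\in B_{\sigma/10}$. For $\delta$ small enough, Proposition~\ref{lm:localized-compare-gradient} (and Remark~\ref{rm:translation-invariant}) applied with some $\e_1\in(0,1]$ yields $v\in W^{1,p}(B_{3\sigma r}(y))$ with
\begin{equation*}
\fint_{B_{3\sigma r}(y)}|\nabla u-\nabla v|^p\,dx\leq \e_1^p, \qquad \|\nabla v\|_{L^\infty(B_{2\sigma r}(y))}^p\leq C_0=C_0(p,n,\Lambda).
\end{equation*}

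Set $N:=\max\{3^n,\ 2^{p+1}C_0\}+1$ and denote $E:=\{x\in B_{\sigma r}(y):\,\M_{B_1}(|\nabla u|^p)(x)>N\}$. I claim that
\begin{equation*}
E\subset \bigl\{x\in B_{\sigma r}(y):\,\M_{B_{2\sigma r}(y)}(|\nabla u-\nabla v|^p)(x)>N/2^{p+1}\bigr\}.
\end{equation*}
Indeed, for $x\in B_{\sigma r}(y)$ I split the maximal function over radii $\rho>0$: if $\rho\leq \sigma r$ then $B_\rho(x)\subset B_{2\sigma r}(y)$, hence
\begin{equation*}
\frac{1}{|B_\rho(x)|}\int_{B_\rho(x)\cap B_1}|\nabla u|^p\,dz\leq 2^p\,\M_{B_{2\sigma r}(y)}(|\nabla u-\nabla v|^p)(x)+2^p C_0;
\end{equation*}
while for $\rho>\sigma r$ the estimate $|x-x_0|<2\sigma r<2\rho$ yields $B_\rho(x)\subset B_{3\rho}(x_0)$, so
\begin{equation*}
\frac{1}{|B_\rho(x)|}\int_{B_\rho(x)\cap B_1}|\nabla u|^p\,dz\leq 3^n\,\M_{B_1}(|\nabla u|^p)(x_0)\leq 3^n.
\end{equation*}
The choice of $N$ then forces the inclusion. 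Weak-$(1,1)$ for the centered Hardy--Littlewood maximal operator gives
\begin{equation*}
|E|\leq \frac{C(n,p)}{N}\int_{B_{2\sigma r}(y)}|\nabla u-\nabla v|^p\,dx\leq C(n,p,C_0)\,\e_1^p\,|B_{\sigma r}(y)|,
\end{equation*}
and the $A_\infty$ characterization~\eqref{charac-A-infty} yields $w(E)\leq A(C\e_1^p)^\nu w(B_{\sigma r}(y))$ with $A,\nu$ depending only on $n$ and $[w]_{A_\infty}$. Choosing $\e_1$ so that $A(C\e_1^p)^\nu<\e$ and then fixing $\delta,\sigma$ through Proposition~\ref{lm:localized-compare-gradient} completes the proof.

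The main obstacle is the maximal-function decomposition used to obtain the crucial inclusion: the $L^\infty$ control of $\nabla v$ from Proposition~\ref{lm:localized-compare-gradient} is available only on $B_{2\sigma r}(y)$, so the small/large-ball dichotomy has to be tuned so that the ``far'' contribution is dominated by the anchor value $\M_{B_1}(|\nabla u|^p)(x_0)\leq 1$ via a clean geometric containment $B_\rho(x)\subset B_{3\rho}(x_0)$, while the ``near'' contribution lives entirely inside the region of Lipschitz control for $v$. Keeping the constants ($N$ in particular) independent of $\lambda,\theta$ and of the weight is exactly what the two-parameter invariance of~\eqref{ME} and the rescaling by $\kappa=(5/4)^{n/p}$ are designed to achieve.
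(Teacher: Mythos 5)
Your proof is correct and follows essentially the same route as the paper: select an anchor point $x_0$ from the non-empty intersection in \eqref{one-point-condition}, apply Proposition~\ref{lm:localized-compare-gradient} (via Remark~\ref{rm:translation-invariant}) to produce a comparison field $v$ with $\nabla v$ bounded on $B_{2\sigma r}(y)$, split $\M_{B_1}(|\nabla u|^p)$ into small radii ($\rho\leq\sigma r$, handled by the $L^\infty$ bound on $\nabla v$ and $\M(|\nabla u-\nabla v|^p)$) versus large radii ($\rho>\sigma r$, handled by $B_\rho(x)\subset B_{3\rho}(x_0)$ and $\M_{B_1}(|\nabla u|^p)(x_0)\leq 1$), then invoke the weak-$(1,1)$ bound and the $A_\infty$ characterization \eqref{charac-A-infty}. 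The one point worth noting is that you make explicit the normalization $\kappa=(5/4)^{n/p}$ needed to bring the averages down to $1$ before applying the Proposition; the paper applies the Proposition immediately after obtaining the $(5/4)^n$ bounds without mentioning this rescaling, so your version is actually slightly more careful at that step. The remaining differences (the exact factor $2^p$ vs.\ $2^{p-1}$, the precise value of $N$, and stating the inclusion directly rather than as the complement as the paper does) are cosmetic.
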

\begin{proof}
By \eqref{one-point-condition} there exists $x_0\in B_{\sigma r}(y)\cap B_{\frac{\sigma}{10}}$ such that 
\begin{align}\label{max-one-point}
\M_{B_1}(|\nabla  u|^p)(x_0) \leq 1 \quad \mbox{and}\quad  \M_{B_1}(|\F|^{p'} )(x_0)\leq \delta.
\end{align}
This together with the facts $B_{4 r}(y)\subset B_{5 r}(x_0)\cap B_1$ and $B_{4 \sigma r}(y)\subset B_{5 \sigma r}(x_0)\cap B_1$ implies that
\begin{align*}
&\fint_{B_{4 r}(y)}{|\nabla  u|^p dx}
\leq \frac{|B_{5 r}(x_0)|}{|B_{4 r}(y)|} \frac{1}{|B_{5 r}(x_0)|} \int_{B_{5 r}(x_0)\cap B_1}{|\nabla  u|^p \, dx}\leq \big(\frac54\big)^n,\\
&\fint_{B_{4\sigma r}(y)}{|\nabla  u|^p dx}
\leq \frac{|B_{5\sigma r}(x_0)|}{|B_{4\sigma r}(y)|} \frac{1}{|B_{5 \sigma r}(x_0)|} \int_{B_{5\sigma r}(x_0)\cap B_1}{|\nabla  u|^p \, dx}\leq \big(\frac54\big)^n,\\
  &\fint_{B_{4r}(y)} |\F|^{p'} \, dx
 \leq \frac{|B_{5 r}(x_0)|}{|B_{4 r}(y)|} \frac{1}{|B_{5 r}(x_0)|} \int_{B_{5 r}(x_0)\cap B_1}{|\F|^{p'}  dx}\leq \big(\frac54\big)^n \delta.
\end{align*}
Therefore, we can apply  Proposition~\ref{lm:localized-compare-gradient} and Remark~\ref{rm:translation-invariant} for 
$\tilde \e \in (0,1]$ that will be determined later.
As a consequence, we obtain 
\begin{equation}\label{eq:nabla-u-v}
\fint_{B_{3\sigma r}(y)}{|\nabla  u - \nabla  v|^p\, dx}\leq \tilde\e^p
\end{equation}
for some function $v\in  W^{1,p}(B_{3 \sigma r}(y))$  with
\begin{equation*}
\|\nabla  v\|_{L^\infty(B_{2 \sigma  r}(y))}^p
\leq C(p,n,\Lambda) \fint_{B_{3 \sigma r}(y)}{|\nabla  v|^p \, dx}.
\end{equation*}
These combined with the first inequality in \eqref{max-one-point} give
\begin{equation}\label{bounded-grad}
\|\nabla  v\|_{L^\infty(B_{2 \sigma  r}(y))}^p\leq 2^{p-1} C \left[ \fint_{B_{3\sigma r}(y)}{|\nabla  u - \nabla  v|^p\, dx} +
\fint_{B_{3\sigma r}(y)}{|\nabla  u |^p\, dx}\right]\leq 2^{p-1} C \Big[ \tilde\e^p+
\big(\frac43\big)^n \Big]\leq C_*,
\end{equation}
where $C^*>0$ depends only on $p$, $n$, and $\Lambda$.
We claim that \eqref{max-one-point}--\eqref{bounded-grad} yield
\begin{equation}\label{key-claim}
\big\{B_{\sigma r}(y): \, \M_{B_{3\sigma r}(y)}(|\nabla u - \nabla v|^p) \leq C_*\big\} 
\subset \big\{B_{\sigma r}(y):\, \M_{B_1}(|\nabla  u|^p)\leq  N \big\}
\end{equation}
with $N := \max{\{2^{p} C_*, 3^n\}}$.
Indeed, let $x$ be a point in the set on the left hand side of \eqref{key-claim}, and consider $B_\rho(x)$. If $\rho\leq \sigma r$, 
then $B_\rho(x)\subset B_{2\sigma r}(y)\subset B_1$ and hence 
\begin{align*}
 \frac{1}{|B_{\rho}(x)|} \int_{B_{\rho}(x)\cap B_1}{|\nabla  u|^p  dy}
 &\leq \frac{2^{p-1}}{|B_{\rho}(x)|}\Big[ \int_{B_{\rho}(x)\cap B_1}{|\nabla  u -\nabla v|^p dy}
 +\int_{B_{\rho}(x)\cap B_1}{|\nabla  v|^p dy} \Big]\\
 &\leq 2^{p-1} \Big[\M_{B_{3\sigma r}(y)}(|\nabla u - \nabla v|^p)(x)
 +\|\nabla  v\|_{L^\infty(B_{2 \sigma  r}(y))}^p  \Big]\leq 2^p C_*.
\end{align*}
On the other hand, if $\rho> \sigma r$ then $B_\rho(x)\subset B_{3\rho}(x_0)$.
This and the first inequality in \eqref{max-one-point} give
\begin{align*}
 \frac{1}{|B_{\rho}(x)|} \int_{B_{\rho}(x)\cap B_1}{|\nabla  u|^p  dy}
 &\leq  \frac{3^n}{|B_{3\rho}(x_0)|} \int_{B_{3 \rho}(x_0)\cap B_1}{|\nabla  u|^p  dy}\leq 3^n.
\end{align*}
Therefore, $\M_{B_1}(|\nabla  u|^p)(x)\leq  N $ and  claim \eqref{key-claim} is proved. Notice that \eqref{key-claim} is equivalent to 
\begin{equation*}
 \big\{B_{\sigma r}(y):\, \M_{B_1}(|\nabla  u|^p)>  N \big\} \subset \big\{B_{\sigma r}(y): \, \M_{B_{3\sigma r}(y)}(|\nabla u - \nabla v|^p) > C_*\big\}. 
\end{equation*}
It follows from this, the weak type $1-1$ estimate, and \eqref{eq:nabla-u-v} that
\begin{align*}
 \big|\big\{B_{\sigma r}(y):\, \M_{B_1}(|\nabla  u|^p)>  N \big\}\big|
& \leq \big| \big\{B_{\sigma r}(y): \, \M_{B_{3\sigma r}(y)}(|\nabla u - \nabla v|^p) > C_*\big\} \big|\\
&\leq C \int_{B_{3\sigma r}(y)} |\nabla u - \nabla v|^p dx\leq C_1 \tilde\e^p |B_{\sigma r}(y)|.
\end{align*}
We then infer  from property \eqref{charac-A-infty} that 
\begin{align*}
 w\Big(\big\{B_{\sigma r}(y):\, \M_{B_1}(|\nabla  u|^p)>  N \big\}\Big)
 &\leq   A \Big(\frac{| \big\{B_{\sigma r}(y):\, \M_{B_1}(|\nabla  u|^p)>  N \big\}|}{|B_{\sigma r}(y)|}\Big)^{\nu}
w(B_{\sigma r}(y))
\leq A (C_1\tilde\e^{p})^\nu w(B_{\sigma r}(y))
\end{align*}
with   $A$ and $\nu$ being  the  constants given by  characterization \eqref{charac-A-infty} for $w$. We choose  $\tilde \e^p  := \min{\{C_1^{-1}(\e A^{-1})^{\frac{1}{\nu}},1\}}$ to complete the proof.
\end{proof}

In view of Lemma~\ref{initial-density-est}, we can apply a variation of the Vitali covering lemma as in \cite[Lemma~3.8]{MP2} 
 to obtain:
\begin{lemma}\label{second-density-est}
Assume that   $\A$ satisfies \eqref{structural-reference-1}--\eqref{structural-reference-3} and  $\F\in L^{p'}(B_{10};\R^n)$.
Let  $M_0>0$  and  $w$ be an $A_s$ weight for some $1<s<\infty$.
There exists a constant $N=N(p, n,\Lambda)>1$ satisfying  
for  any $\e>0$, we can find   
small positive constants $\delta$  and $\sigma$ depending only on $\e,\, p,\, n,\, \omega,\,\Lambda,\, M_0$, and $[w]_{A_s}$
such that:  if $\lambda>0$, $\theta>0$,
\begin{equation*}
\sup_{0<\rho\leq \frac35}\sup_{y\in B_{\frac{\sigma}{10}}} \Theta_{B_{ \sigma\rho }(y)}(\A)\leq  \delta,
\end{equation*}
then   for any weak solution $u$ of \eqref{ME} satisfying $\|u\|_{L^\infty(B_1)}\leq \frac{M_0}{\lambda \theta}$ and 
\begin{equation}\label{density-ass}
  w \Big( \{B_{\frac{\sigma}{10}}: 
\M_{B_1}(|\nabla  u|^p)> N \}\Big) <\e \, w(B_{\frac{\sigma}{10}}),
\end{equation}
we have
\begin{align*}
w \Big(\{B_{\frac{\sigma}{10}}: \M_{B_1}(|\nabla u|^p)> N\}\Big)
\leq 10^{ns } [w]_{A_s}^2\e \, \Big[
w\Big(\{B_{\frac{\sigma}{10}}: \M_{B_1}(|\nabla u|^p)> 1\}\Big)
+w\Big(\{ B_{\frac{\sigma}{10}}: \M_{B_1}(|\F|^{p'})> \delta \}\Big)\Big].\nonumber
\end{align*}
\end{lemma}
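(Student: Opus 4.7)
The plan is to derive Lemma~\ref{second-density-est} from the one-scale density estimate of Lemma~\ref{initial-density-est} via a Calder\'on--Zygmund/Vitali stopping-time argument, in the spirit of \cite[Lemma~3.8]{MP2}. Introduce
\begin{equation*}
E := \{x\in B_{\frac{\sigma}{10}}: \M_{B_1}(|\nabla u|^p)(x) > N\}, \qquad G:= \{x\in B_{\frac{\sigma}{10}}: \M_{B_1}(|\nabla u|^p)(x) > 1\}\cup \{x\in B_{\frac{\sigma}{10}}: \M_{B_1}(|\F|^{p'})(x) > \delta\},
\end{equation*}
so that $E\subset G\subset B_{\sigma/10}$ and the target inequality reads $w(E) \leq 10^{ns}[w]_{A_s}^2\,\e\, w(G)$.

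\textbf{Stopping time and contrapositive.} For a.e.\ $x \in E$, Lebesgue differentiation together with the absolute continuity of $dw$ gives $w(E\cap B_{\sigma r}(x))/w(B_{\sigma r}(x))\to 1$ as $r\to 0^+$. Since $x\in B_{\sigma/10}$ implies $B_{\sigma/10}\subset B_{\sigma/5}(x)$, hypothesis \eqref{density-ass} yields
\begin{equation*}
w(E\cap B_{\sigma/5}(x))= w(E) < \e\, w(B_{\sigma/10})\leq \e\, w(B_{\sigma/5}(x)).
\end{equation*}
By continuity of $r\mapsto w(E\cap B_{\sigma r}(x))$, I pick a maximal $r_x\in (0,1/5)$ with $w(E\cap B_{\sigma r_x}(x))=\e\, w(B_{\sigma r_x}(x))$, so that $w(E\cap B_{\sigma r}(x))< \e\, w(B_{\sigma r}(x))$ for every $r\in (r_x, 1/5]$. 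Applying the contrapositive of Lemma~\ref{initial-density-est} to $B_{\sigma r_x}(x)$ shows that its hypothesis \eqref{one-point-condition} must fail, that is,
\begin{equation*}
B_{\sigma r_x}(x)\cap B_{\sigma/10}\subset G.
\end{equation*}

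\textbf{Vitali extraction and chain of estimates.} The Vitali covering lemma extracts a countable disjoint subfamily $\{B_{\sigma r_i}(x_i)\}_i$ of $\{B_{\sigma r_x}(x)\}_x$ with $E\subset \bigcup_i B_{5\sigma r_i}(x_i)$ up to a $w$-null set. A short case analysis shows that $w(E\cap B_{5\sigma r_i}(x_i))\leq \e\, w(B_{5\sigma r_i}(x_i))$ always holds: for $5 r_i\leq 1/5$ use maximality at $r=5 r_i$, while for $5 r_i> 1/5$ observe that $B_{\sigma/10}\subset B_{5\sigma r_i}(x_i)$, so \eqref{density-ass} gives $w(E\cap B_{5\sigma r_i}(x_i))=w(E)<\e\, w(B_{\sigma/10})\leq \e\, w(B_{5\sigma r_i}(x_i))$. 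Combining this with the $A_s$-doubling $w(5B)\leq 5^{ns}[w]_{A_s} w(B)$ and the strong doubling bound $w(B_{\sigma r_i}(x_i))\leq 2^{ns}[w]_{A_s}\, w(B_{\sigma r_i}(x_i)\cap B_{\sigma/10})$ (which follows from the first inequality in \eqref{strong-doubling} together with the dimensional fact $|B_{\sigma r}(x)\cap B_{\sigma/10}|\geq 2^{-n}|B_{\sigma r}(x)|$ for $x\in B_{\sigma/10}$ and $r\leq 1/5$), I chain
\begin{align*}
w(E) &\leq \sum_i w(E\cap B_{5\sigma r_i}(x_i))\leq \e\sum_i w(B_{5\sigma r_i}(x_i))\leq 5^{ns}[w]_{A_s}\,\e\sum_i w(B_{\sigma r_i}(x_i))\\
&\leq 10^{ns}[w]_{A_s}^2\,\e\sum_i w(B_{\sigma r_i}(x_i)\cap B_{\sigma/10})\leq 10^{ns}[w]_{A_s}^2\,\e\, w(G),
\end{align*}
the last step using disjointness of the $B_{\sigma r_i}(x_i)$'s and the inclusion $B_{\sigma r_i}(x_i)\cap B_{\sigma/10}\subset G$.

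\textbf{Main obstacle.} The delicate part is the bookkeeping of radii: the stopping time and Vitali extraction must be arranged so that Lemma~\ref{initial-density-est} applies at radius $\sigma r_x\leq \sigma/5$, and so that after $5$-enlargement the density bound still propagates, either by maximality or by the global hypothesis \eqref{density-ass}. The two applications of doubling---one for the Vitali enlargement, one for the comparison with the restriction to $B_{\sigma/10}$---then combine multiplicatively to produce the precise factor $10^{ns}[w]_{A_s}^2$.
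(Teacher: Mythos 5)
Your proof is correct and follows essentially the same stopping-time/Vitali argument as the paper, with the same three ingredients: selecting, for each point of the bad set, the largest radius at which the $w$-density equals $\e$; applying the contrapositive of Lemma~\ref{initial-density-est} at that radius to force the selected ball (intersected with $B_{\sigma/10}$) into the good set; and combining the Vitali $5$-enlargement with doubling and the geometric bound $|B_{\sigma r}(x)\cap B_{\sigma/10}|\geq 2^{-n}|B_{\sigma r}(x)|$ to produce the factor $10^{ns}[w]_{A_s}^2$. The only cosmetic deviations are that the paper uses openness of $\{\M_{B_1}(|\nabla u|^p)>N\}$ (lower semicontinuity of $\M$) instead of Lebesgue differentiation to get the density ratio near $1$ as $r\to 0^+$, and it defines the ratio $m(r)$ for all $r>0$ and observes $m(r)<\e$ for every $r\geq 1/5$, which makes the bound on $w(E\cap B_{5\sigma r_i}(x_i))$ immediate without your case split on whether $5r_i\leq 1/5$.
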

\begin{proof}
Set
\[
C=\{ B_{\frac{\sigma}{10}}:\, \M_{B_1}(|\nabla   u|^p)> N \} \quad\mbox{and}\quad  
D=\{ B_{\frac{\sigma}{10}}:\, \M_{B_1}(|\nabla  u|^p)> 1 \}\cup \{B_{\frac{\sigma}{10}}:\, \M_{B_1}(|\F|^{p'})> \delta \}.
\]
Let $y$ be any point in $C$, and define 
\[
m(r):= \frac{w(C\cap B_{\sigma r}(y))}{w(B_{\sigma r}(y))} \quad\mbox{for}\quad  r>0.
\]
 Due to the lower semicontinuity of $\M_{B_1}(|\nabla   u|^p)$, we have   $\lim_{r\to 0^+} m(r)= 1$ as $C$ is open. Moreover, condition  \eqref{density-ass} implies that
$
 m(r) \leq \frac{w(C)}{w(B_{\frac{\sigma}{10}})} <\e$  when $ r\geq \frac15.
$
Hence, there exists $r_y\in (0,\frac15)$ such that
$ m(r_y)=\e$ and $m(r)<\e $ for all $r>r_y$. That is,
\begin{equation}\label{density-choice}
w(C\cap B_{\sigma r_y}(y))=\e\, w(B_{\sigma r_y}(y))\quad \mbox{and}\quad w(C\cap B_{\sigma r}(y))< \e \, w(B_{\sigma r}(y))\quad \forall r>r_y.
\end{equation}
Therefore, by Vitali's covering lemma we can select a countable sequence $\{y_i\}_{i=1}^\infty$ such that $\{B_{\sigma r_i}(y_i)\}$ is a sequence of  disjoint balls and
\begin{equation}\label{covering}
C\subset \bigcup_{i=1}^\infty B_{5\sigma  r_i}(y_i),
\end{equation}
where $r_i := r_{y_i}$. Since 
$w(C\cap B_{\sigma r_i}(y_i))=\e\, w(B_{\sigma r_i}(y_i))$ by \eqref{density-choice} and $[w]_{A_\infty}\leq [w]_{A_s}$ by Lemma~\ref{weight:basic-pro}, it follows from Lemma~\ref{initial-density-est} that
\begin{equation}\label{inD}
B_{\sigma r_i}(y_i)\cap B_{\frac{\sigma}{10}}\subset D.
\end{equation}
Using \eqref{density-choice}--\eqref{covering} and the first inequality in \eqref{strong-doubling}, we have 
\begin{align}\label{w-Cballs}
w(C) 
&\leq w\Big(\bigcup_{i=1}^\infty  B_{5 \sigma r_i}(y_i) \cap C\Big)\leq \sum_{i=1}^\infty w(B_{5\sigma  r_i}(y_i) \cap C)\nonumber\\
&\leq \e  \sum_{i=1}^\infty w(B_{5\sigma  r_i}(y_i) )\leq \e [w]_{A_s}  5^{ n s}\sum_{i=1}^\infty w(B_{\sigma  r_i}(y_i) ).
\end{align}
We claim that
\begin{equation}\label{simple-geometry}
\sup_{y\in  B_{\frac{\sigma}{10}}, \, 0<r<\frac15}\frac{|B_{\sigma r}(y)|}{|B_{\sigma r}(y)\cap B_{\frac{\sigma}{10}}|} 
\leq 2^n,
\end{equation}
which together with \eqref{strong-doubling} gives 
$
w(B_{\sigma r_i}(y_i))\leq [w]_{A_s}  2^{n s} \, w(B_{\sigma r_i}(y_i)\cap B_{\frac{\sigma}{10}})$.
It follows from  this and    \eqref{inD}--\eqref{w-Cballs} that
\[
w(C) \leq  \e [w]_{A_s}^2  10^{ n s}\sum_{i=1}^\infty w(B_{ \sigma r_i}(y_i) \cap B_{\frac{\sigma}{10}})= 
\e [w]_{A_s}^2  10^{ n s} w\Big(\bigcup_{i=1}^\infty B_{\sigma  r_i}(y_i) \cap B_{\frac{\sigma}{10}} \Big)\leq \e [w]_{A_s}^2  10^{ n s} w(D),
\]
which implies the desired estimate. Thus it remains to show \eqref{simple-geometry}. For this, let 
$y\in  B_{\frac{\sigma}{10}}$, $r\in (0, 1/5)$, and it is enough  to consider only the case $y\neq 0$.
Then the line  passing through $y$ and $0$ intersects  $\partial B_{\sigma r}(y)$ at two distinct points, say $a_1$ and $a_2$ with $a_1$ 
being the one on the same side as the origin $0$ with respect to the point $y$. If $a_1\not\in B_{\frac{\sigma}{10}}$, 
then $B_{\frac{\sigma}{10}}\subset B_{\sigma r}(y)$ since for any $x\in B_{\frac{\sigma}{10}}$ we have  
$|x-y| \leq |x| +|y|=|x| + |y-a_1|-|a_1|< \frac{\sigma}{10}+ \sigma r -\frac{\sigma}{10}=\sigma r$. It follows that
\[
\frac{|B_{\sigma r}(y)|}{|B_{\sigma r}(y)\cap B_{\frac{\sigma}{10}}|} 
= \frac{|B_{\sigma r}(y)|}{| B_{\frac{\sigma}{10}}|} =(10r)^n < 2^n.
\]
 On the other hand if $a_1\in B_{\frac{\sigma}{10}}$, then by letting $z$ be the midpoint of $a_1$ and $y$ we obviously have 
   $B_{\frac{\sigma r}{2}}(z) \subset B_{\sigma r}(y)$. If $0$ belongs to the line segment $[z,y]$ connecting $z$ and $y$, then  $|z|=|a_1|-|a_1-z|< \frac{\sigma}{10} -\frac{\sigma r}{2}$. In case  $0\not\in  [z,y]$, then    $|z|=|y|-|y-z|< \frac{\sigma}{10} -\frac{\sigma r}{2}$. Thus we always have $|z|< \frac{\sigma}{10} -\frac{\sigma r}{2}$, and  hence 
       $B_{\frac{\sigma r}{2}}(z) \subset  B_{\frac{\sigma}{10}}$ as  $x\in B_{\frac{\sigma r}{2}}(z)$ implies that
   $|x| \leq |x - z| +|z|<\frac{\sigma r}{2} +\frac{\sigma}{10} -\frac{\sigma r}{2}=\frac{\sigma}{10} $. Therefore, we deduce that
$B_{\frac{\sigma r}{2}}(z) \subset B_{\sigma r}(y)\cap B_{\frac{\sigma}{10}}$ giving
\[
\frac{|B_{\sigma r}(y)|}{|B_{\sigma r}(y)\cap B_{\frac{\sigma}{10}}|} 
\leq \frac{|B_{\sigma r}(y)|}{| B_{\frac{\sigma r}{2}}(z)|} =2^n.
\]
We then conclude that \eqref{simple-geometry} holds, and the proof is complete.
\end{proof}

\subsection{Gradient estimates in weighted $L^q$ spaces}\label{sub:Lebesgue-Spaces}

We are now ready to prove  Theorem~\ref{thm:main}.

\begin{proof}[\textbf{Proof of Theorem~\ref{thm:main}}]
Let  $N=N(p,n,\Lambda)>1$ be as in  Lemma~\ref{second-density-est}, and let $l=q/p\geq 1$. We choose  
$\e=\e(p, q, n,\Lambda,s, [w]_{A_s} )>0$ be such that
\[
\e_1 \eqdef 10^{n s } [w]_{A_s}^2\e = \frac{1}{2 N^{l}}, 
\]
and let $\delta$  and $\sigma$ (depending only on $p,\,q,\,n,\,\omega, \,\Lambda,\, M_0,\, s$, and $[w]_{A_s}$) be the corresponding 
positive constants given by Lemma~\ref{second-density-est}.
Assuming for a moment that $u$  satisfies 
\begin{equation}\label{initial-distribution-condition}
w\big( \{B_{\frac{\sigma}{10}}: 
\M_{B_1}(|\nabla u|^p)> N \}\big) <  \e \, w(B_{\frac{\sigma}{10}}).
\end{equation}
Then it follows from  Lemma~\ref{second-density-est} that
\beq\label{initial-distribution-est}
w\big(\{B_{\frac{\sigma}{10}}: \M_{B_1}(|\nabla u|^p)> N\}\big)
\leq \e_1  \left[
w\big(\{B_{\frac{\sigma}{10}}: \M_{B_1}(|\nabla u|^p)> 1\}\big)
+ w\big(\{ B_{\frac{\sigma}{10}}: \M_{B_1}(|\F |^{p'})> \delta \}\big)\right].
\eeq
 Let us iterate this estimate by considering
\[
u_1(x) = \frac{u(x)}{N^{\frac1p}}, \quad \F_1(x) = \frac{\F(x)}{N^{\frac{p-1}{p}}}\quad \mbox{and}\quad \lambda_1 = N^{\frac1p}\lambda.
\]
It is clear that $\|u_1\|_{L^\infty(B_1)}\leq \frac{M_0}{\lambda_1 \theta}$ and  $u_1$    is a weak solution of
$
\div \Big[\frac{\A(x,\lambda_1 \theta u_1, \lambda_1 \nabla u_1)}{\lambda_1^{p-1}}  \Big]=\div \F_1$ in   $B_{10}
$.
Moreover, thanks to \eqref{initial-distribution-condition} we have
\begin{align*}
w\big( \{B_{\frac{\sigma}{10}}: 
\M_{B_1}(|\nabla u_1|^p)> N \}\big) &= w\big( \{B_{\frac{\sigma}{10}}: 
\M_{B_1}(|\nabla u|^p)> N^2 \}\big) < \e \, w(B_{\frac{\sigma}{10}}).
\end{align*}
Therefore, by applying  Lemma~\ref{second-density-est} to $u_1$ we obtain
\begin{align*}
w\big(\{B_{\frac{\sigma}{10}}: \M_{B_1}(|\nabla u_1|^p)> N\}\big)
&\leq \e_1 \left[
w\big(\{B_{\frac{\sigma}{10}}: \M_{B_1}(|\nabla u_1|^p)> 1 \}\big)
+ w\big(\{ B_{\frac{\sigma}{10}}: \M_{B_1}(|\F_1 |^{p'} )> \delta \}\big)\right]\\
&= \e_1  \left[ 
w\big(\{B_{\frac{\sigma}{10}}: \M_{B_1}(|\nabla u|^p)> N \}\big)
+ w\big(\{ B_{\frac{\sigma}{10}}: \M_{B_1}(|\F|^{p'})> \delta N\}\big) \right].
\end{align*}
We infer from this and  \eqref{initial-distribution-est} that
\begin{align}\label{first-iteration-est}
&w\big(\{B_{\frac{\sigma}{10}}: \M_{B_1}(|\nabla u|^p)> N^2\}\big)
\leq \e_1^2 
w\big(\{B_{\frac{\sigma}{10}}: \M_{B_1}(|\nabla u|^p)> 1 \}\big)\\
&\qquad + \e_1^2 w\big(\{ B_{\frac{\sigma}{10}}: \M_{B_1}(|\F|^{p'})> \delta\} \big)+ \e_1w\big(\{ B_{\frac{\sigma}{10}}: 
\M_{B_1}(|\F|^{p'})> \delta N\}\big). \nonumber
\end{align}
 Next, let
\[
u_2(x) = \frac{u(x)}{N^{\frac2p}}, \quad \F_2(x) = \frac{\F(x)}{N^{\frac{2(p-1)}{p}}} \quad \mbox{and}\quad \lambda_2 = N^{\frac2p}\lambda.
\]
Then  $u_2$  is a weak solution of
$
\div \Big[\frac{\A(x,\lambda_2 \theta u_2, \lambda_2 \nabla u_2)}{\lambda_2^{p-1}}  \Big]=\div \F_2$ in   $B_{10}
$
satisfying $\|u_2\|_{L^\infty(B_1)}\leq M_0/ (\lambda_2 \theta) $ and 
\begin{align*}
  w\big( \{B_{\frac{\sigma}{10}}: 
\M_{B_1}(|\nabla u_2|^p)> N \}\big) &= w\big( \{B_{\frac{\sigma}{10}}: 
\M_{B_1}(|\nabla u|^p)> N^3 \}\big)<\e \,  w(B_{\frac{\sigma}{10}}).
\end{align*}
Hence by applying  Lemma~\ref{second-density-est} to $u_2$ we get
\begin{align*}
w\big(\{B_{\frac{\sigma}{10}}: \M_{B_1}(|\nabla u_2|^p)> N\}\big)
&\leq \e_1 \left[ 
w\big(\{B_{\frac{\sigma}{10}}: \M_{B_1}(|\nabla u_2|^p)> 1 \}\big)
+ w\big(\{ B_{\frac{\sigma}{10}}: \M_{B_1}(|\F_2|^{p'} )> \delta\}\big)\right]\\
&= \e_1  \left[ 
w\big(\{B_{\frac{\sigma}{10}}: \M_{B_1}(|\nabla u|^p)> N^2 \}\big)
+ w\big(\{ B_{\frac{\sigma}{10}}: \M_{B_1}(|\F|^{p'})> \delta N^2\}\big) \right].
\end{align*}
This together with  \eqref{first-iteration-est} gives
\begin{align*}
w\big(\{B_{\frac{\sigma}{10}}: \M_{B_1}(|\nabla u|^p)> N^3\}\big)
\leq \e_1^3 
w\big(\{B_{\frac{\sigma}{10}}: \M_{B_1}(|\nabla u|^p)> 1 \}\big)
+ \sum_{i=1}^3\e_1^i w\big(\{ B_{\frac{\sigma}{10}}: \M_{B_1}(|\F|^{p'})> \delta N^{3-i}\} \big).
\end{align*}
By repeating the iteration, we then conclude that
\begin{align*}
w\big(\{B_{\frac{\sigma}{10}}: \M_{B_1}(|\nabla u|^p)> N^k\}\big)
\leq \e_1^k 
w\big(\{B_{\frac{\sigma}{10}}: \M_{B_1}(|\nabla u|^p)> 1 \}\big)
 + \sum_{i=1}^k\e_1^iw\big(\{ B_{\frac{\sigma}{10}}: \M_{B_1}(|\F|^{p'})> \delta N^{k-i}\} \big)
\end{align*}
for all $k=1,2,\dots$ This together with  
\begin{align*}
&\int_{B_{\frac{\sigma}{10}}}\M_{B_1}(|\nabla u|^p)^{l} \, dw  =l \int_0^\infty t^{l-1} w\big(\{B_{\frac{\sigma}{10}}: \M_{B_1}(|\nabla u|^p)>t\}\big)\, dt\\
&=l \int_0^{N} t^{l -1} w\big(\{B_{\frac{\sigma}{10}}: \M_{B_1}(|\nabla u|^p)>t\}\big)\, dt
+l \sum_{k=1}^\infty\int_{N^{k}}^{N^{k+1}} t^{l -1} w\big(\{B_{\frac{\sigma}{10}}: \M_{B_1}(|\nabla u|^p)>t\}\big)\, dt\\
&\leq N^{l} w(B_{\frac{\sigma}{10}}) + (N^{l} -1) \sum_{k=1}^\infty N^{l k}w\big(\{B_{\frac{\sigma}{10}}: \M_{B_1}(|\nabla u|^p)>N^k\}\big)
\end{align*}
gives
\begin{align*}
\int_{B_{\frac{\sigma}{10}}}\M_{B_1}(|\nabla u|^p)^{l} \, dw  
&\leq  N^{l} w(B_{\frac{\sigma}{10}}) + (N^{l} -1)w(B_{\frac{\sigma}{10}}) \sum_{k=1}^\infty (\e_1 N^{l})^k\\
&\quad +\sum_{k=1}^\infty\sum_{i=1}^k (N^{l} -1)N^{l k}\e_1^i w\big(\{ B_{\frac{\sigma}{10}}: \M_{B_1}(|\F|^{p'})> \delta N^{k-i}\} \big).
\end{align*}
But we have
\begin{align*}
&\sum_{k=1}^\infty\sum_{i=1}^k (N^{l} -1)N^{{l} k}\e_1^iw\big(\{ B_{\frac{\sigma}{10}}: 
\M_{B_1}(|\F|^{p'})> \delta N^{k-i}\} \big)\\
&=\big(\frac{N}{\delta}\big)^{l}\sum_{i=1}^\infty (\e_1 N^{l})^i\left[\sum_{k=i}^\infty 
(N^{l} -1)\delta^{{l}}N^{{l} (k-i-1)} w\big(\{B_{\frac{\sigma}{10}}: \M_{B_1}(|\F|^{p'})>
\delta N^{k-i}\} \big)\right]\\
&=\big(\frac{N}{\delta}\big)^{l}\sum_{i=1}^\infty (\e_1 N^{l})^i\left[\sum_{j=0}^\infty (N^{l} -1)\delta^{{l}}N^{{l} (j-1)}
w\big(\{ B_{\frac{\sigma}{10}}: \M_{B_1}(|\F|^{p'})> \delta N^{j}\} \big)\right]\\
&\leq \big(\frac{N}{\delta}\big)^{l} \Bigg[\int_{B_{\frac{\sigma}{10}}}\M_{B_1}(|\F|^{p'})^{l}  \, dw \Bigg]\sum_{i=1}^\infty (\e_1 N^{l})^i.
\end{align*}
Thus we infer that
\begin{align*}
\int_{B_{\frac{\sigma}{10}}}\M_{B_1}(|\nabla u|^p)^{l} \, dw 
&\leq N^{l} w(B_{\frac{\sigma}{10}}) + \left[ (N^{l} -1) w(B_{\frac{\sigma}{10}}) +\big(\frac{N}{\delta}\big)^{l} 
\int_{B_{\frac{\sigma}{10}}}\M_{B_1}(|\F|^{p'})^{l}  \, dw  \right] \sum_{k=1}^\infty (\e_1 N^{l})^k\\
&= N^{l} w(B_{\frac{\sigma}{10}})+ \left[ (N^{l} -1)w(B_{\frac{\sigma}{10}}) +\big(\frac{N}{\delta}\big)^{l}
\int_{B_{\frac{\sigma}{10}}}\M_{B_1}(|\F|^{p'})^{l}  \, dw \right] \sum_{k=1}^\infty 2^{-k}\\
&\leq C\left( w(B_{\frac{\sigma}{10}})+ \int_{B_{\frac{\sigma}{10}}}\M_{B_1}(|\F|^{p'})^{l} \, dw  \right)
\end{align*}
with the constant $C$ depending only on $p$, $q$, $n$, $\omega$, $ \Lambda$,    $M_0$, $s$, and $[w]_{A_s}$. This together with the facts $l =q/p$ and  $
|\nabla u(x)|^p  \leq  
\M_{B_1}(|\nabla u|^p)(x)$
for a.e.  $x\in B_{\frac{\sigma}{10}}$ yields
\begin{equation}\label{initial-desired-L^p-estimate}
\fint_{B_{\frac{\sigma}{10}}}|\nabla u|^q dw \leq C\left( 1+ \fint_{B_{\frac{\sigma}{10}}}\M_{B_1}(|\F|^{p'})^{\frac{q}{p}} \, dw  \right).
\end{equation}

We next remove the extra assumption  
\eqref{initial-distribution-condition} for $u$. Notice that for any $M>0$, by using the weak type $1-1$ estimate for the maximal function  we get
\begin{align}\label{weak1-1}
\big| \{B_{\frac{\sigma}{10}}: 
\M_{B_1}(|\nabla u|^p)> N  M^p\}\big|
\leq \frac{C }{N M^p}\int_{B_1} |\nabla u|^p dx.
\end{align}
Let $\bar{u}(x,t) = u(x,t)/ M$, where
\[ M^p := \frac{2 C   \|\nabla u\|_{L^p(B_1)}^p }{N (\e K^{-1})^{\frac{1}{\beta}} |B_{\frac{\sigma}{10}}|}
\]
with
$K$ and $\beta$ being the constants given by  \eqref{strong-doubling} and depending only on $n$ and $[w]_{A_s}$. 
Then it follows from \eqref{weak1-1} that
\[\big| \{B_{\frac{\sigma}{10}}: 
\M_{B_1}(|\nabla \bar{u}|^p)> N\}\big|
\leq  \frac12 (\e K^{-1})^{\frac{1}{\beta}} |B_{\frac{\sigma}{10}}|.
\]
This together with the second inequality in
 \eqref{strong-doubling}   gives
\[w \big( \{B_{\frac{\sigma}{10}}: 
\M_{B_1}(|\nabla \bar{u}|^p)> N\}\big)
< \e \, w(B_{\frac{\sigma}{10}}).
\]
 Hence we can apply \eqref{initial-desired-L^p-estimate} to $\bar{u}$ with $\F$ and $\lambda$ being replaced by 
$\bar \F= \F/M^{p-1}$ and $\bar \lambda = \lambda M$. By reversing back to the  functions $u$ and  $\F$,  we then  obtain 
\begin{align*}
\fint_{B_{\frac{\sigma}{10}}}|\nabla u|^q dw 
&\leq C\left( M^q+ \fint_{B_{\frac{\sigma}{10}}}\M_{B_1}(|\F|^{p'})^{\frac{q}{p}} \, dw \right)
\leq  C\left( \|\nabla u\|_{L^p(B_1)}^q+ \fint_{B_{\frac{\sigma}{10}}}\M_{B_1}(|\F|^{p'})^{\frac{q}{p}} \, dw  \right).
\end{align*}
Due to the translation invariance of our equation, the above estimate holds true if $B_{\frac{\sigma}{10}}$ and  $B_1$ 
are respectively  replaced by   $B_{\frac{\sigma}{10}}(z)\subset B_\frac32$ and  $B_1(z) \subset B_2$
for any $z\in B_1$. Therefore,  desired estimate 
\eqref{main-estimate} follows by covering $B_1$ by a finite number of balls $B_{\frac{\sigma}{10}}(z)$  with  $z\in B_1$.
\end{proof}

 \subsection{Gradient estimates in weighted Morrey spaces}\label{sec:Morrey-Spaces}

\begin{proof}[\textbf{Proof of Theorem~\ref{thm:weighted-morrey}}]
Let $\bar x\in B_{1}$ and $0<r\leq 2$. Then by  rescaling and  applying  
 Theorem~\ref{thm:main}  we obtain
\begin{align*}
\fint_{B_r(\bar x)}|\nabla u|^q  \, dw
&\leq  C\Bigg[  \Big(\fint_{B_{2 r}(\bar x)} |\nabla u|^p dx\Big)^{\frac{q}{p}} + 
\fint_{B_{\frac32 r}(\bar x)}\M_{B_{2 r }(\bar x)}(|\F|^{p'})^{\frac{q}{p}} \, dw \Bigg].
\end{align*}
Since  $B_{\frac32 r}(\bar x)\subset B_4$ and  $B_{2 r}(\bar x)\subset B_5$, it then follows that
\begin{align}\label{eq:localized-est}
\varphi(B_r(\bar x)) \fint_{B_r(\bar x)}|\nabla u|^q  \, dw
\leq  C\Bigg[ \varphi(B_r(\bar x)) \Big( \fint_{B_{2 r}(\bar x)} |\nabla u|^p dx\Big)^{\frac{q}{p}}  +
\frac{\varphi(B_r(\bar x))}{ w(B_{\frac32 r}(\bar x))}  \int_{B_{\frac32 r}(\bar x)\cap B_{4}}\M_{B_{5}}(|\F|^{p'})^{\frac{q}{p}} 
\, dw \Bigg].
\end{align}
We next estimate the first term in the above right hand side. For this, let  $\e\in (0,n)$ to be determined later and  use the trick in  \cite[Page~2506]{MP2} to    write 
\begin{align*}
\fint_{B_{2 r}(\bar x)} |\nabla u|^p dx
&=\omega_n^{-1}
(2 r)^{-\e}\int_{B_{2 r}(\bar x)}|\nabla u|^p \bar w \, dx
\leq \omega_n^{-1} 
(2 r)^{-\e}\int_{ B_{5}}|\nabla u|^p \bar w \, dx
\end{align*}
with $\omega_n :=|B_1|$ and  $\bar w$  being  the weight defined by 
\[
\bar w(x) := \min{\{|x - \bar x|^{-n+ \e  }, (2 r)^{-n +\e }\} }.
\]
As $\bar w\in A_t$  with 
$[\bar w]_{A_t} \leq C(t,\e,n)$ for any $1<t<\infty$ (see \cite[Lemma~3.2]{MP2}), we can apply Theorem~\ref{thm:main} with $q=p$  to estimate the above last integral.
As a consequence, we obtain 
\begin{align}\label{Mo2}
 \fint_{B_{2 r}(\bar x)} |\nabla u|^p dx
&\leq  C(2 r)^{-\e}\Bigg( \bar w(B_{5} )\, \|\nabla u\|_{L^p(B_{10})}^p  +  \int_{B_{\frac{15}{2}}} \M_{B_{10}}(|\F|^{p'}) 
\bar w\, dx \Bigg)\nonumber\\
&\leq  C (2 r)^{-\e}\Bigg( \|\nabla u\|_{L^p(B_{10})}^p  +  \int_{B_{\frac{15}{2}}} \M_{B_{10}}(|\F|^{p'}) 
\bar w\, dx \Bigg)
\end{align}
with $C>0$  
 depending only on  $p$, $n$, $\omega$, $\Lambda$, $M_0$, and $\e$.
Notice that to obtain the last inequality we have used the fact
\[
\bar w(B_{5}) \leq \int_{B_6(\bar x)} |x - \bar x|^{-n+\e } \, dx
=\omega_n \int_0^6 t^{\e  -1 } dt
=\frac{ \omega_n }{\e} \, 6^{\e}. 
\]
To bound  the last integral in \eqref{Mo2}, we employ    Fubini's theorem to get
 \begin{align*}
 \int_{B_{\frac{15}{2}}} \M_{B_{10}}(|\F|^{p'})   \bar w\, dx 
 &=\int_0^\infty \int_{\{B_{\frac{15}{2}}: \bar w(x)>t\}} \M_{B_{10}}(|\F|^{p'}) \, dx dt
 \leq \int_0^{ (2 r)^{-n +\e }}   \int_{B_{t^{(-n +\e)^{-1}}}(\bar x)\cap B_{\frac{15}{2}}}\M_{B_{10}}(|\F|^{p'})  \, dx dt\\
 &\leq \int_0^{ 15^{-n +\e }}   \int_{B_{\frac{15}{2}}}\M_{B_{10}}(|\F|^{p'})  \, dx dt
 +  \int_{15^{-n +\e }}^{ (2 r)^{-n +\e }}   \int_{B_{t^{(-n +\e)^{-1}}}(\bar x)\cap B_{\frac{15}{2}}}\M_{B_{10}}(|\F|^{p'})  \, dx dt.
  \end{align*}
Since $B_{\frac{15}{2}}\subset B_{\frac{17}{2}}(\bar x)$, we have $B_{\frac{15}{2}}= B_{\frac{17}{2}}(\bar x)\cap B_{\frac{15}{2}}$ and  then deduce that
    \begin{align*}
 \int_{B_{\frac{15}{2}}} \M_{B_{10}}(|\F|^{p'})   \bar w\, dx 
 &\leq  C \| \M_{B_{10}}(|\F|^{p'})\|_{\calM^{1,\varphi^{\frac{p}{q}}}(B_{\frac{15}{2}})} 
 \left[ \varphi(B_{\frac{17}{2}}(\bar x))^{\frac{-p}{q}}  +  \int_{15^{-n +\e }}^{ (2 r)^{-n +\e }} 
t^{\frac{n}{-n +\e}} \varphi(B_{t^{(-n +\e)^{-1}}}(\bar x))^{\frac{-p}{q}}  dt\right],
  \end{align*}
  where we recall that $\calM^{1,\varphi}(U)$ denotes  the  Morrey space $\calM^{1,\varphi}_w(U)$ with $w=1$.  As $\varphi
  \in \mathcal B_+$
  by the assumption and $\{\mathcal B_\alpha\}$ is decreasing in $\alpha$, there exists $\alpha\in (0,n)$ such that  $\varphi\in
  \mathcal B_\alpha$.   It then follows if $\e< \alpha p/q$ that 
  \begin{align*}
 \int_{B_{\frac{15}{2}}} \M_{B_{10}}(|\F|^{p'})   \bar w\, dx 
 &\leq   C (2 r)^{\alpha \frac{p}{q}} \varphi(B_{2 r}(\bar x))^{\frac{-p}{q}} \| \M_{B_{10}}(|\F|^{p'})\|_{\calM^{1,\varphi^{\frac{p}{q}}}(B_{\frac{15}{2}})} 
 \Big[ 1  + 
  \int_0^{ (2 r)^{-n +\e }} 
t^{\frac{n-\alpha \frac{p}{q}}{-n +\e}}   dt\Big]\\
 &\leq   C    (2 r)^{\e} \varphi(B_r(\bar x))^{\frac{-p}{q}}
 \| \M_{B_{10}}(|\F|^{p'})\|_{\calM^{1,\varphi^{\frac{p}{q}}}(B_{\frac{15}{2}})}.
   \end{align*}
 Combining this with \eqref{Mo2}, we arrive at:
\begin{align*}
 \fint_{B_{2 r}(\bar x)} |\nabla u|^p dx
\leq   C\Bigg(r^{-\e} \|\nabla u\|_{L^p(B_{10})}^p+ \varphi(B_r(\bar x))^{\frac{-p}{q}}
\| \M_{B_{10}}(|\F|^{p'})\|_{\calM^{1,\varphi^{\frac{p}{q}}}(B_{\frac{15}{2}})} \Bigg).
\end{align*}
Therefore, we infer from \eqref{eq:localized-est} and the fact $\varphi\in \mathcal B_\alpha$  that 
\begin{align*}
\varphi(B_r(\bar x)) 
\fint_{B_r(\bar x)}|\nabla u|^q  \, dw
&\leq  C_\e\Bigg[\varphi(B_r(\bar x)) r^{\frac{-\e q}{p}}\|\nabla u\|_{L^p(B_{10})}^q
+\| \M_{B_{10}}(|\F|^{p'})\|_{\calM^{1,\varphi^{\frac{p}{q}}}(B_{\frac{15}{2}})}^{\frac{q}{p}}
+
\|\M_{B_{5}} (|\F|^{p'})\|_{\calM^{\frac{q}{p},\varphi}_w(B_{4})}^{\frac{q}{p}}\Bigg]\\
&\leq  C_\e\Bigg[\varphi(B_2(\bar x)) r^{\alpha-\frac{\e q}{p}}\|\nabla u\|_{L^p(B_{10})}^q
+\| \M_{B_{10}}(|\F|^{p'})\|_{\calM^{1,\varphi^{\frac{p}{q}}}(B_{\frac{15}{2}})}^{\frac{q}{p}}
+
\|\M_{B_{5}} (|\F|^{p'})\|_{\calM^{\frac{q}{p},\varphi}_w(B_{4})}^{\frac{q}{p}}\Bigg]
\end{align*}
for all $\bar x\in B_{1}$ and $0<r\leq 2$.  By taking $\e = \frac{\alpha}{2}\frac{p}{q}$
and as $\sup_{\bar x\in B_1} \varphi( B_2(\bar x))<\infty$, this gives estimate \eqref{weighted-morrey-est}.
   \end{proof}
  
  \begin{proof}[\textbf{Proof of Theorem~\ref{cor:weighted-morrey}}]
Since  $v\in A_{\frac{q}{p}}$, 
  Lemma~\ref{weight:basic-pro} gives 
    $ v^{1-(\frac{q}{p})'} \in A_{(\frac{q}{p})'}\subset A_\infty$. Thus our assumptions imply that condition  ({\bf B}) 
  in  Corollary~\ref{cor:two-weight-case} is satisfied.
Also as $\varphi \in \mathcal B_0$, it is clear that \eqref{cond:vwvarphi}  implies \eqref{wmu1-vmu2-sufficient}. Indeed, for any 
$y\in \R^n$ and any 
$s\geq 2r>0$ we have from \eqref{cond:vwvarphi} and $\varphi \in \mathcal B_0$ that
\[
  \frac{v(B_s(y))}{w(B_s(y))} \frac{1}{\phi(B_s(y))} 
  \leq C_* \, \frac{1}{\varphi(B_{\frac{s}{2}}(y))} \leq C_* C \, \frac{1}{\varphi(B_r(y))}
\]
yielding \eqref{wmu1-vmu2-sufficient}.
Moreover, by  \cite[Theorem~9.3.3]{Gra} there exist $s\in (1,\infty)$ and $C>0$ depending only on $n$ and $[w]_{A_\infty}$ such that $[w]_{A_s}\leq C$.
 Therefore, it follows  from Theorem~\ref{thm:weighted-morrey} and  Corollary~\ref{cor:two-weight-case} that
\begin{equation}\label{eq:1.3+3.4}
\| \nabla u \|_{\calM^{q,\varphi}_w(B_1)}  \leq    C\Bigg(
\|\nabla u\|_{L^p(B_{10})}
+ \| \M_{B_{10}}(|\F|^{p'})\|_{\calM^{1, \varphi^{\frac{ p}{q}}}(B_{10})}^{\frac{1}{p}}
+  \| |\F|^{\frac{1}{p-1}}\|_{\calM^{q,\phi}_v(B_{10})}\Bigg)
\end{equation}
with $C>0$ depending  only on  $q$, $p$, $n$, $\omega$, $\Lambda$,   $M_0$,  $\varphi$, $C_*$, $[w]_{A_\infty}$, $[v]_{A_{\frac{q}{p}}}$,  and $[w, v^{1-(\frac{q}{p})'}]_{A_{\frac{q}{p}}}$.
Thus it remains to estimate the middle term on the right hand side of \eqref{eq:1.3+3.4}. Let $l:=q/p>1$. Then 
for  any nonnegative function $g\in L^1(B_{10})$,  we obtain  from   H\"older inequality and assumption  \eqref{cond:wv}
   that
  \begin{align*}
 \frac{\varphi(B_R(\bar x))}{|B_R(\bar x)|^l} \Big(\int_{B_R(\bar x)\cap B_{10}} g \, dx\Big)^l
&\leq \frac{\varphi(B_R(\bar x))}{|B_R(\bar x)|^l} \Big( \int_{B_R(\bar x)\cap B_{10}} g^l  v\, dx\Big) 
 \Big(\int_{B_R(\bar x)} v^{1 - l'} \Big)^{l-1}\\
 &\leq  [w, v^{1-l'}]_{A_l}  \frac{\varphi(B_R(\bar x))}{w\big( B_R(\bar x)\big)}  \int_{B_R(\bar x)\cap B_{10}} g^l  v\, dx
 \leq   [w, v^{1-l'}]_{A_l} \|g\|_{\calM^{l,\hat\varphi}_v(B_{10})}^l\nonumber
  \end{align*}
  for all $\bar x\in B_{10}$ and all $0<R\leq 20$, where
  \[
  \hat\varphi(B) :=  \frac{v(B)}{w(B)} \varphi(B).
  \]
  Hence we infer  that
 \begin{equation}\label{eq:est-in-terms-v}
   \| \M_{B_{10}}(|\F|^{p'})\|_{\calM^{1, \varphi^{\frac{1}{l}}}(B_{10})}\leq 
    [w, v^{1-l'}]_{A_l}^{\frac{1}{l}} \,\, \| \M_{B_{10}}(|\F|^{p'})\|_{\calM^{l,\hat\varphi}_v(B_{10})}.
  \end{equation}
Using  $\phi\in \mathcal B_0$, condition \eqref{cond:vwvarphi}, and the doubling property of $w$ due to Lemma~\ref{weight:basic-pro}, we have
\begin{align*}
\sup_{s\geq 2r}  \frac{1}{\phi(B_s(y))} \leq
C \frac{1}{\phi(B_{2r}(y))}\leq C C_* \frac{w(B_{2r}(y))}{v(B_{2r}(y))}
\frac{1}{\varphi(B_r(y))}\leq C' \frac{w(B_r(y))}{v(B_r(y))}
\frac{1}{\varphi(B_r(y))} =C'\frac{1}{\hat\varphi(B_r(y))}
\end{align*} 
for all $y\in \R^n$ and $r>0$.    
 Thus as $v\in A_l$ we can use the strong type estimate for   the Hardy--Littlewood   maximal function 
 given by Corollary~\ref{cor:one-weight-case} to estimate the right hand side of \eqref{eq:est-in-terms-v}. As a result, we get
 \begin{equation}\label{eq:option-1}
   \| \M_{B_{10}}(|\F|^{p'})\|_{\calM^{1, \varphi^{\frac{1}{l}}}(B_{10})}\leq C \| |\F|^{p'}\|_{\calM^{l,\phi}_v(B_{10})}
    = C \| |\F|^{\frac{1}{p-1}}\|_{\calM^{q,\phi}_v(B_{10})}^p.
  \end{equation}
 This and \eqref{eq:1.3+3.4} yield  desired estimate
\eqref{weighted-morrey-final}.
\end{proof}

We close the paper by noting that 
 if  $w\in A_{\frac{q}{p}}$, then the assumption  $v\in A_{\frac{q}{p}}$ in Theorem~\ref{cor:weighted-morrey} can be disposed if  condition \eqref{cond:wv} is strengthened by assuming that there exists $r>1$ such that
\begin{equation*}
 \sup_{B}{ \Big(\fint_{B} w\, dx\Big) \Big(\fint_{B} v^{r[1-(\frac{q}{p})']}\, dx\Big)^{\frac{q-p}{pr }}  } 
  <\infty. 
 \end{equation*}
This  follows from the above proof of  Theorem~\ref{cor:weighted-morrey} and the fact that  condition  ({\bf A})  in  Corollary~\ref{cor:two-weight-case} is satisfied in this case. Indeed, having  ({\bf A})  allows us to use  Corollary~\ref{cor:two-weight-case} to obtain \eqref{eq:1.3+3.4} 
and it remains to show  \eqref{eq:option-1}. But  for any nonnegative function $g\in L^1(B_{10})$,  we have from   H\"older inequality  and $w\in A_l$    that
  \begin{align*}
 \frac{\varphi(B_R(\bar x))}{|B_R(\bar x)|^l} \Big(\int_{B_R(\bar x)\cap B_{10}} g \, dx\Big)^l
&\leq \frac{\varphi(B_R(\bar x))}{|B_R(\bar x)|^l} \Big( \int_{B_R(\bar x)\cap B_{10}} g^l  w\, dx\Big) 
 \Big(\int_{B_R(\bar x)} w^{1-l'} \Big)^{l-1}\\
 &\leq  [w]_{A_l}  \frac{\varphi(B_R(\bar x))}{w\big( B_R(\bar x)\big)}  \int_{B_R(\bar x)\cap B_{10}} g^l  w\, dx
 \leq   [w]_{A_l} \|g\|_{\calM^{l,\varphi}_w(B_{10})}^l\nonumber
  \end{align*}
  for all $\bar x\in B_{10}$ and all $0<R\leq 20$.
This observation  together with  Corollary~\ref{cor:two-weight-case} gives
 \begin{equation*}
   \| \M_{B_{10}}(|\F|^{p'})\|_{\calM^{1, \varphi^{\frac{ 1}{l}}}(B_{10})}\leq 
    [w]_{A_l}^{\frac{1}{l}} \,\, \| \M_{B_{10}}(|\F|^{p'})\|_{\calM^{l,\varphi}_w(B_{10})}\leq 
    C \| |\F|^{p'}\|_{\calM^{l,\phi}_v(B_{10})}
    = C \| |\F|^{\frac{1}{p-1}}\|_{\calM^{q,\phi}_v(B_{10})}^p
  \end{equation*}
  which is precisely  \eqref{eq:option-1}.
  As a consequence, we still arrive at  conclusion \eqref{weighted-morrey-final}.\\
  
\noindent {\bf Acknowledgement} The authors would like to thank the anonymous referee for valuable  suggestions which improve the presentation of the paper.

    \Addresses
  \end{document}